\pdfoutput=1
\documentclass[11pt]{article}
\usepackage{amssymb}
\usepackage{graphicx}
\graphicspath{{Images/}}
\usepackage{xcolor} 
\usepackage{tensor}
\usepackage{fullpage} 
\usepackage{amsmath}
\usepackage{amsthm}
\usepackage{verbatim}
\usepackage{mathrsfs}
\usepackage{soul}

\usepackage{scalerel,stackengine}
\stackMath
\newcommand\wwhat[1]{%
\savestack{\tmpbox}{\stretchto{%
  \scaleto{%
    \scalerel*[\widthof{\ensuremath{#1}}]{\kern-.6pt\bigwedge\kern-.6pt}%
    {\rule[-\textheight/2]{1ex}{\textheight}}
  }{\textheight}%
}{0.5ex}}%
\stackon[1pt]{#1}{\tmpbox}%
}
\parskip 1ex

\usepackage{enumitem}
\setlist[enumerate]{leftmargin=1.5em}
\setlist[itemize]{leftmargin=1.5em}

\setlength{\marginparwidth}{.6in}
\setlength{\marginparsep}{.2in}

\usepackage{seqsplit}

\definecolor{green}{rgb}{0,0.8,0} 



\newtheorem{theorem}{Theorem}[section]
\newtheorem{corollary}[theorem]{Corollary}
\newtheorem{lemma}[theorem]{Lemma}
\newtheorem{proposition}[theorem]{Proposition}

\theoremstyle{definition}
\newtheorem{definition}[theorem]{Definition}

\theoremstyle{remark}
\newtheorem{remark}[theorem]{Remark}

\numberwithin{equation}{section}
\newcommand{\RN}[1]{%
  \textup{\uppercase\expandafter{\romannumeral#1}}%
}

\newcommand{\nnrm}[1]{{\vert\kern-0.25ex\vert\kern-0.25ex\vert #1 
    \vert\kern-0.25ex\vert\kern-0.25ex\vert}}

\newcommand{\supp}{{\mathrm{supp}}\,}

\newcommand{\ud}{\mathrm{d}}

\newcommand{\nb}{\nabla}


\newcommand{\tht}{\theta}



\newcommand{\bbC}{\mathbb C}

\newcommand{\bbN}{\mathbb N}

\newcommand{\bbR}{\mathbb R}

\newcommand{\bbT}{\mathbb T}




\vfuzz2pt 
\hfuzz2pt 


\begin{document}

\title{Non-convergence of the rotating stratified flows toward the  quasi-geostrophic dynamics}
\author{Min Jun Jo\thanks{Mathematics Department, Duke University, Durham NC 27708 USA. E-mail: minjun.jo@duke.edu} \and Junha Kim\thanks{Department of Mathematics, Ajou University, Suwon 16499, Republic of Korea. E-mail: junha02@ajou.ac.kr} \and Jihoon Lee\thanks{Department of Mathematics, Chung-Ang University, Seoul 06974, Republic of Korea. E-mail: jhleepde@cau.ac.kr}}
\date{\today}



\maketitle


\begin{abstract}
The quasi-geostrohpic (QG) equation has been used to capture the asymptotic dynamics of the rotating stratified Boussinesq flows in the regime of strong stratification and rapid rotation. In this paper, we establish the invalidity of such approximation when the rotation-stratification ratio is either fixed to be unity or tends to unity sufficiently slowly in the asymptotic regime: the difference between the rotating stratified Boussinesq flow and the corresponding QG flow remains strictly away from zero, independently of the intensities of rotation and stratification. In contrast, we also show that the convergence occurs when the rotation-stratification ratio is fixed to be a number other than unity or converges to unity sufficiently fast. As a corollary, we compute a lower bound of the convergence rate, which blows up as the rotation-stratification ratio goes to unity.
 
\end{abstract}

\section{Introduction}
\subsection{Quasi-geostrophy of rotating stratified fluids}\label{sec2}
We study the rotating stratified Boussinesq equations
\begin{equation}\label{eq_main}
   \left\{\begin{aligned}
\partial_t v  + ({v} \cdot \nabla){v} &= -\Omega e_3 \times {v} -\nabla p + N\theta e_3, \\
\partial_t\theta + ({v} \cdot \nabla)\theta &= -N v_3, \\
\nabla \cdot {v }&= 0,
\end{aligned}
\right. 
\end{equation}
 in $\bbR^3\times(0,\infty)$ with the corresponding initial data $v(x,0)=v_0(x)$ and $\theta(x,0)=\theta_0(x)$. The unknowns $v$, $\theta$, and $p$ represent the fluid velocity, the density disturbance, and the pressure, respectively. The constant $\Omega$ stands for the rotation frequency, while the other constant $N$ denotes the intensity of stratification. Such a system \eqref{eq_main} can be derived by looking at the solution as the perturbation around the stratified (linearly and stably) densities in the original inviscid Boussinesq equations on the rotating reference frame, with suitable normalization.

 The simultaneous presence of both rotation and stratification is known to produce highly complicated oscillations that are difficult to analyze, hampering precise description of the dynamics. To resolve this issue, Charney \cite{Charney48} in the 1940's considered the \emph{quasi-geostrophic} (QG) equations
 \begin{equation}\label{QG}
\left\{
\begin{alignedat}{3}
\partial_t q + &\mathbf{v}_{H}\cdot \nabla_H q=0,  \\
q &= \Delta_\mu \psi, \\
\mathbf{v}_{H}&=(-\partial_2\psi,\partial_1 \psi)^{\top}, \\
\end{alignedat}
\right.
\end{equation}
as the legitimate approximate equations for large-scale atmospheric motions near the geostrophy meaning that the pressure is almost balanced with the Coriolis effect. Note that quasi-hydrostatic dynamics, which is related to stratification of our interest, has been investigated as well. See \cite{Charney48} for details. We also refer to \cite{Charney} for turbulence theory of the QG system.

In the above formulation, we use the following notations
\begin{equation}\label{QG_def_aux}
     \nabla_{\mu} := (-\partial_2, \partial_1, 0, \mu \partial_3)^{\top}, \quad \Delta_\mu := \partial_1^2+\partial_2^2+\mu^2\partial_3^2, \quad  \mbox{and} \quad \nabla_{H}^{\perp}=(-\partial_2,\partial_1)^{\top},
 \end{equation}
where $\mu\in \bbR $ denotes the rotation-stratification ratio $$\mu=\frac{\Omega}{N}.$$ Moreover, the QG system \eqref{QG} may be supplemented with the initial data of the form
\begin{equation}\label{QG_def_initial}
\nabla_\mu \psi (x,0)= \nabla_\mu \psi_0 (x) = P_\mu u_0,
\end{equation}which even  alludes to the relationship between the Boussinesq flow \eqref{eq_main} and the QG flow \eqref{QG} because $u_0$ will be defined by $u_0=(v_0,\theta_0)$  for \eqref{eq_main} and $P_\mu$ will be defined in \eqref{def_proj} as a part of eigenvalue analysis for the linear propagator in \eqref{eq_main}. The main unknown quantity $q$ in \eqref{QG} is called the pseudo-potential vorticity in the geophysics literature because it is conserved only at the horizontal projection of the particle motion \cite{Charney}.  

Such a simple reduced model \eqref{QG} is known to preserve the key features of the real dynamics. One can notice that there is no vertical advection in \eqref{QG}; essentially, the dynamics is quasi-2D. So a natural candidate for the turbulence of \eqref{QG} is the inverse cascade, which appears to be more generic in 2D than the forward cascade does, see \cite{Alex} for a recent numerical result on such inverse cascade in the 3D stratified rotating fluids. 

\subsubsection{Goal of this paper}
The purpose of this article is to provide a mathematically rigorous validation or invalidation of \eqref{QG} as the legitimate approximate description of large-scale geophysical flows whose motions are governed by the 3D 
 inviscid rotating stratified Boussinesq equations \eqref{eq_main}, in the asymptotic regime where both $\Omega$ and $N$ tend to infinity.  In the later sections, we would set up the problem rigorously with precise definitions.

 \subsection{Summary}
 We summarize the contributions of our main results that will be presented in the next chapter.
 \begin{itemize}
     \item \textbf{Specification of $\mu$ for non-convergence.} We fully specify the set of the rotation-stratification ratios where non-convergence actually takes its place; $\mu=1$ is the only element of that set in the case of the whole space $\bbR^3$. This is proved by the combination of our first main theorem, Theorem~\ref{thm3}, and Theorem~\ref{thm1} in Appendix.
     \item \textbf{Pseudo-dichotomy when $\mu\to 1$.} When $\mu$ goes to $1$,  we prove that (i) if $\Omega$ and $N$ grow sufficiently slow, then non-convergence happens, and that (ii) if $\Omega$ and $N$ grow sufficiently fast, still convergence is guaranteed.  This suggests that not only the ratio $\mu=\Omega/N$ but also the constants $\Omega$ and $N$ play the decisive roles in the convergence process. More importantly, our non-convergence result for the variant case $\mu\to 1$ indicates that the QG approximation is indeed \emph{singular,} yielding the first result of this kind compared to the previous related studies \cite{MS,Tak1} that provide the convergence rates.

     \item \textbf{Estimation of convergence rates.} We provide a new quantitative lower bound of the convergence rates for the QG approximation.  Corollary~\ref{cor_div} is the very first \emph{blow-up} result of the harmonic analytical constant $C_\mu$ as $\mu\to 1$ (see the constant $C_\mu$ appearing in Theorem~\ref{thm1}), which follows from our pseudo-dichotomy result of Theorem~\ref{dich}. More precisely, while the harmonic analytical nature of the obtained convergence rate itself  does not provide an explicitly computable lower bound in general, we can leverage Theorem~\ref{dich} to construct a contradictory sequence of pairs $(\Omega,N)$ whenever the rate does not satisfy the lower bound we suggest in Corollary~\ref{cor_div}. 

 \end{itemize}

\subsection{Background}

\subsubsection{Three-scale singular limit}

It is well-established that both rotation and stratification have similar stabilizing effects that work on turbulent fluids via making the dynamics of the fluids quasi-two dimensional; we refer to \cite{BMN97, BMN99, Baroud, Charve05, CDGG02, CDGG06, Gill, Hough, IT13, IT15, KLT14, KLT, Pedlosky, P16, SA18, T17,  Analogy} for the previous various 
results. It is important to consider rotation and stratification together in action, because their simultaneous presence gives rise to interesting phenomena including jets, fronts, and merger of vortices, that might have been easily suppressed if there is only one strong effect, either rotation or stratification. As an example, statistically the most ubiquitous form of vortices is found to have the aspect ratio of approximately $0.8\Omega/N$ \cite{shape} where $\Omega$ and $N$ represent the intensities of rotation and stratification, see \eqref{eq_main}.

However, when both effects take place with significantly strong intensities,
it is still vague how the two different effects in action would affect the fluid dynamics in a rigorous fashion. This leads to the necessity of the simple reduced model equations that would describe the crucial features of the original equations. The QG equations \eqref{QG} have been proposed to be such a reduced model system. To rigorously justify the utility of the QG equations as the legitimate approximation of the Boussinesq equations \eqref{eq_main} for rotating stratified fluids, we prove the convergence of \eqref{eq_main} to \eqref{QG} in the regime of the so-called \emph{three-scale singular limit} problem incorporating the three limits
 \begin{equation}\label{def_Burger}
     \Omega\to\infty, \quad N\to\infty, \quad \mu:=\frac{\Omega}{N} \to \nu
 \end{equation}
for some fixed $\nu\in(0,\infty).$ The category into which the convergence problem falls is named \emph{singular limit} problem because oscillations from the large parameters $\Omega$ and $N$ might prevent the convergence of the solutions as the parameters tend to infinity. Note that $\mu\to\nu$ is included as a part of the three-scale singular limits; the case $\nu=1$ actually proves to be singular in some sense, see the second part \eqref{invalid} of Theorem~\ref{dich}. As $\nu$ varies in \eqref{def_Burger}, the corresponding asymptotic regime changes. The endpoint case $\nu=0$ was handled by Mu and Schochet in \cite{MS}, where it is shown that convergence rates of solutions can be proved via suitable Strichartz estimates. Earlier than \cite{MS}, Takada \cite{Tak1} treated the non-trivial one-scale singular limit problem  $\Omega=0$, $N\to\infty$, and  $\mu=0=\nu$, which may be regarded as a special case of the endpoint case $\nu=0$. It is notable that while the previous convergence rate results \cite{rate2, rate1} for three-scale limits involved certain well-prepared initial data only, such results were obtained in \cite{MS,Tak1} for general Sobolev initial data without the well-preparedness condition. The results of this paper are also proved for general Sobolev initial data. See also \cite{general1,general2} for the broader theory of three-scale singular limits. For the other endpoint case $\nu=\infty$ where rotation is dominant, see the very recent result \cite{MW} by Mu and Wei.

\subsubsection{Previous works for the QG system}

 Embid and Majda (1996) in \cite{EM} gave the first mathematically rigorous proof of the \emph{weak} convergence of the equations \eqref{UEQ} to the QG  equations \eqref{QG} on a fixed time interval $[0,T]$ as $\varepsilon\to0$ for $\varepsilon^{-1}$ being proportional to both $N$ and $\Omega$, where $T>0$ is \emph{sufficiently small} depending on the initial data only. The result was valid even for initial data that were \emph{not} in hydrostatic nor geostrophic balance, while the results \cite{BB,Chemin} relied on the strategy developed by Klainerman and Majda \cite{KM} for singular limits of hyperbolic problems requiring initial data to be balanced. The balance assumption is natural in the sense that we are looking for perturbative solutions around the stable states of geostrophic balance. The term `quasi-geostrophic' means that the solutions are in \emph{almost} geostrophic balance.

Then Babin, Mahalov, Nicolaenko, and Zhou (1997) presented an improved result in \cite{BMNZ}, which states that the convergence time $T$ can be made arbitrarily large. According to \cite{BMNZ}, while Embid and Majda missed the regularizing effects of rotation and stratification (the time interval constructed in \cite{EM} was essentially as small as the one for the local existence of the 3D Euler equations), a detailed Fourier analysis of \cite{BMNZ} on the torus $\bbT^3$ lead to the claim that the maximal convergence time can be extended to any positive number for certain Sobolev initial data with Burger number outside a Lebesgue measure zero set in $\bbR_{+}$.
 The regularization effects of rotation and stratification turned out to be well-captured by dispersive estimates for the linear propagator $e^{iNtp_{\mu}(D)}$. For examples, see the studies \cite{CDGG06,IT13,IT15,KLT14,KLT,LT17} that are based on such dispersive estimates. This paper particularly concerns the regime $\operatorname{Bu}=O(1)$, or equivalently $0<\mu<\infty$ converging to $\nu\in(0,\infty)$ as $N\to \infty$ and $\Omega\to\infty$. Here, the Burger number $\operatorname{Bu}$ is defined by $\operatorname{Bu}=\frac{N^2a_3^2}{4\Omega^2}$ where $a_3$ represents the aspect ratio parameter that is often captured by considering the domain $\bbT^2 \times [0,2\pi/a_3]$.

 One interesting remark made without proof in \cite{BMNZ}  was that certain nonlinear terms appearing in their decomposition of the limit equations turned out to be \emph{discontinuous} in the Burger number $\operatorname{Bu}$ (note that the Burger number is a quantity that represents the rotation-stratification ratio normalized by the horizontal and the vertical scales of the periodic boxes) at every $\operatorname{Bu}$ belonging to a countable set, called the strict three-wave resonant set. This implies, according to \cite{BMNZ}, that solutions of the limit system would \emph{discontinuously} depend on the Burger number $\operatorname{Bu}$ as well. The authors of \cite{BMNZ} further claimed that since the solutions of the original inviscid Boussinesq equations \eqref{eq_main} depend on $\operatorname{Bu}$ continuously on a small time interval, the convergence to the solutions of the limit equations \emph{could not be made uniform in $\operatorname{Bu}$.} The authors of \cite{BMNZ} named this paradox ``Devil's staircase of convergence results." Since our domain of interest in this paper is simply $\bbR^3,$ such resonant behavior in the periodic boxes cannot be captured anymore and so there is no connection between our result and the paradox. Despite such a difference, our result of non-convergence is inspired by the conjecture.


 Following the  notations of the recent result \cite{Tak1} that dealt with stratification only, here we prove the pseudo-dichotomy on the rotation-stratification ratios to determine either convergence or non-convergence in the presence of both rotation and stratification with strong intensities. Note that non-convergence at some fixed $\mu$ would immediately imply non-uniformity of the convergence rates near such $\mu$.

\section {Main results}
\subsection{Non-convergence for $\mu=1$}

We prove in Appendix that $\mu\neq1$ implies convergence. We can even show that as long as $\mu$ converges to a number other than one, convergence is guaranteed; See Corollary~\ref{cor_conv} and the corresponding proof. Only the case $\mu=1$ is left, i.e., $\Omega=N$. In that case, we establish the following non-convergence theorem. To state our first main theorem, we introduce the projection operator $P_\mu$ defined by 
\begin{equation}\label{Pmu}
    P_\mu w = \int e^{2\pi i x \cdot \xi}\langle \mathscr{F} w(\xi),b_\mu(\xi)\rangle_{\bbC^{4}} b_\mu(\xi) \, \mathrm{d}\xi \quad \mbox{with}\quad b_\mu(\xi) = \frac {\xi_\mu}{|\xi_\mu|},
\end{equation}
where $\xi_{\mu}=(-\xi_2,\xi_1,0,\mu \xi_3)^{\top}.$ We refer to Section~\ref{eigen} for further information.

\begin{theorem}\label{thm3}
 Fix any $m\in \mathbb{N}$ with $m\geq 6$. For any $u_0=(v_0,\theta_0)\in H^m(\bbR^3)$ with $\nb \cdot v_0=0$ and $u_0-P_1 u_0 \neq 0$ where $P_{1}$ is a projection operator defined in \eqref{Pmu} with $\mu=1$, there exists a constant $A>0$ depending only on $u_0$ such that the unique classical local-in-time solution $u=(v,\theta)$ to \eqref{eq_main} with $\Omega=N$ satisfies
 \begin{equation}\label{div}
\liminf_{N\to\infty}\left(\inf_{t \in [0,t_0]} \| u - \nabla_{1} \psi^{1} \|_{W^{1,\infty}}\right) \geq \frac 12 A \end{equation}
 for some sufficiently small $t_0\in(0,T_{\ast}),$ where $T_{\ast}$ is the maximal existence time of $u$ with
 $$u \in C([0,T_\ast);H^m(\mathbb{R}^3)) \cap C^1([0,T_\ast);H^{m-1}(\mathbb{R}^3))$$ and $\psi^1$ is a continuous function which satisfies \eqref{QG}-\eqref{QG_def_initial} when $\mu = 1$ and $$\nabla_1\psi^1 \in C([0,T]);H^m(\mathbb{R}^3)) \cap C^1([0,T];H^{m-1}(\mathbb{R}^3))$$ for all $T>0$.
\end{theorem}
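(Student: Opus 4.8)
The plan is to exploit the fact that at $\mu=1$ the linear propagator of \eqref{eq_main} is \emph{non-dispersive}, so that — unlike in the case $\mu\ne1$ treated in the Appendix — the ageostrophic part of the flow is not damped as $N\to\infty$, but rather persists as an explicit oscillation that is bounded below in $W^{1,\infty}$. By the eigen‑analysis of Section~\ref{eigen}, the propagator is $e^{iNtp_\mu(D)}$ with $p_\mu(D)$ a self‑adjoint Fourier multiplier whose symbol has eigenvalues $0$ and $\pm\omega_\mu(\xi)$, where $\omega_\mu$ is bounded, smooth and $0$-homogeneous away from the origin and, crucially, $\omega_1\equiv 1$. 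Writing $P_1^{\pm}$ for the spectral projections of $p_1(D)$ onto its $\pm1$ eigenspaces — so $P_1^{+}+P_1^{-}=I-P_1$, $(P_1^{\pm})^2=P_1^{\pm}$, $P_1^{+}P_1^{-}=0$, and $P_1,P_1^{\pm}$ all bounded on every $H^s(\bbR^3)$ since their symbols are bounded — one gets
\[
e^{iNtp_1(D)}=P_1+e^{iNt}P_1^{+}+e^{-iNt}P_1^{-},
\]
so $e^{iNtp_1(D)}$ is the identity on $\mathrm{Ran}\,P_1=\ker p_1(D)$ (the geostrophic subspace, spanned by $b_1=\xi_1/|\xi_1|$). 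Since $u_0$ is real, the reality structure of $p_1(D)$ gives $P_1^{-}u_0=\overline{P_1^{+}u_0}$; setting $g_1:=(I-P_1)u_0=2\Re(P_1^{+}u_0)$ and $g_2:=2\Im(P_1^{+}u_0)$, both lie in $H^m\hookrightarrow W^{1,\infty}$ (as $m\ge 6$) and $e^{iNtp_1(D)}(I-P_1)u_0=\cos(Nt)\,g_1-\sin(Nt)\,g_2$.

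The first step is a linear‑algebra observation: $g_1$ and $g_2$ are linearly independent whenever $u_0-P_1u_0\ne 0$. Indeed, a nontrivial dependence forces $g_2=\lambda g_1$ for some $\lambda\in\bbR$ (the alternative $g_1=0$ being excluded by hypothesis), hence $P_1^{+}u_0=\tfrac12(1+i\lambda)g_1=\tfrac12(1+i\lambda)(P_1^{+}u_0+P_1^{-}u_0)$; applying $P_1^{+}$ and using $(P_1^{+})^2=P_1^{+}$, $P_1^{+}P_1^{-}=0$ yields $\tfrac12(1-i\lambda)P_1^{+}u_0=0$, so $P_1^{+}u_0=0$ and thus $g_1=2\Re(P_1^{+}u_0)=0$, a contradiction. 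Consequently $\phi\mapsto\|\cos\phi\,g_1-\sin\phi\,g_2\|_{W^{1,\infty}}$ is continuous and strictly positive on the compact circle, so
\[
A:=\min_{\phi\in[0,2\pi]}\big\|\cos\phi\,g_1-\sin\phi\,g_2\big\|_{W^{1,\infty}}>0
\]
depends only on $u_0$, and $\|e^{iNtp_1(D)}(I-P_1)u_0\|_{W^{1,\infty}}\ge A$ for all $t$ and all $N$. The remaining task is to show that $u(t)-\nabla_1\psi^1(t)$ agrees with this explicit oscillation up to an error that is $O(t)$ in $W^{1,\infty}$, uniformly in $N$.

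For the uniform‑in‑$N$ comparison, introduce the filtered unknown $U^N(t):=e^{-iNtp_1(D)}u(t)$, which satisfies $\rd_tU^N=e^{-iNtp_1(D)}\calQ\big(e^{iNtp_1(D)}U^N,e^{iNtp_1(D)}U^N\big)$ with $U^N(0)=u_0$, where $\calQ$ is the Leray‑projected quadratic nonlinearity of \eqref{eq_main}, which loses one derivative and obeys $\|\calQ(w,w)\|_{H^{m-1}}\aleq\|w\|_{H^m}^2$ (the advection being by the divergence‑free $v$). Because the linear term is energy‑neutral ($p_1(D)$ self‑adjoint), a standard energy estimate furnishes $T_0=T_0(\|u_0\|_{H^m})>0$, \emph{independent of $N$}, with $T_\ast\ge T_0$ and $\|u(t)\|_{H^m}\le 2\|u_0\|_{H^m}$ on $[0,T_0]$; and since $e^{\pm iNtp_1(D)}$ is unitary on every $H^s$, $\|\rd_tU^N(t)\|_{H^{m-1}}=\|\calQ(u(t),u(t))\|_{H^{m-1}}\le C(u_0)$, whence $\|U^N(t)-u_0\|_{H^{m-1}}\le C(u_0)\,t$ on $[0,T_0]$. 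Using that $e^{iNtp_1(D)}$ fixes $\mathrm{Ran}\,P_1$ (so $P_1u=P_1U^N$), that $(I-P_1)\nabla_1\psi^1=0$ (since $\mathscr{F}(\nabla_1\psi^1)\parallel\xi_1$) with $\nabla_1\psi^1(0)=P_1u_0$ by \eqref{QG_def_initial} and $\nabla_1\psi^1\in C^1([0,T_0];H^{m-1})$, and that the projections are bounded on $H^{m-1}\hookrightarrow W^{1,\infty}$ — one must route everything through $H^{m-1}$, because $I-P_1$ is \emph{not} bounded on $W^{1,\infty}$, but $e^{iNtp_1(D)}$ is isometric on $H^{m-1}$ — one obtains
\[
u(t)-\nabla_1\psi^1(t)=\big(\cos(Nt)\,g_1-\sin(Nt)\,g_2\big)+R^N(t),\qquad \|R^N(t)\|_{W^{1,\infty}}\le C_1(u_0)\,t,
\]
for all $t\in[0,T_0]$ and all $N$. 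Setting $t_0:=\min\{T_0,\,A/(2C_1(u_0))\}\in(0,T_\ast)$, which depends only on $u_0$, gives $\|u(t)-\nabla_1\psi^1(t)\|_{W^{1,\infty}}\ge A-C_1(u_0)t\ge A/2$ for all $t\in[0,t_0]$ and all $N$, hence \eqref{div} (in fact with $\liminf_{N}$ replaced by $\inf_{N}$).

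The only genuinely delicate point is the linear independence of $g_1$ and $g_2$: this is exactly where the hypothesis $u_0-P_1u_0\ne 0$ enters with full force, and it rests on the idempotency of $P_1^{\pm}$ and the identity $P_1^{+}P_1^{-}=0$ coming from Section~\ref{eigen}. Everything else is soft. The structural feature that drives the result — and that collapses when $\mu\ne1$ — is that at $\mu=1$ the ageostrophic part of $e^{iNtp_1(D)}$ is a rotation in the \emph{fixed} two‑plane $\mathrm{span}\{g_1,g_2\}$, rather than the dispersive flow $e^{\pm iNt\omega_\mu(D)}$ which would push the ageostrophic part to $0$; hence it never becomes small in $W^{1,\infty}$, while on the short interval $[0,t_0]$ both the geostrophic part of $u$ and the QG flow $\nabla_1\psi^1$ stay within $O(t)$ of their common initial value $P_1u_0$.
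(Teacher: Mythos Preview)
Your argument is correct, and it takes a genuinely different route from the paper's proof.

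For the \emph{remainder estimate}, the paper introduces the modified linear system \eqref{MLS}, writes $u=u^\mu+u^{+}+u^{-}+\mathcal{E}$, and then establishes two separate smallness mechanisms: an $O(1/N)$ bound on $u^{\pm}-e^{\pm iNt}P_{\pm}u_0$ via integration by parts in time against the Duhamel integral, and an $O(\varepsilon)$ bound on $\mathcal{E}$ for $t\le t_0(\varepsilon)$ via an $H^s$ energy estimate. You bypass the modified linear system entirely by working with the filtered variable $U^N=e^{-tL}u$, for which unitarity of $e^{-tL}$ on $H^{m-1}$ gives $\|U^N(t)-u_0\|_{H^{m-1}}\le C(u_0)t$ in one stroke; the QG flow is then handled by the same $O(t)$ displacement from its initial value $P_1u_0$. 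This is shorter and avoids the auxiliary system altogether. The paper's decomposition, on the other hand, is the common scaffolding reused in the proofs of Theorems~\ref{rmk_div} and \ref{thm2}, and its separation of the $O(1/N)$ piece from the $O(t)$ piece is conceptually natural for the later $H^s$ convergence result.

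For the \emph{lower bound on the oscillation}, the paper routes through $L^2$: it localizes to a large ball $B(0;r)$, uses $\|{\cdot}\|_{L^\infty(B)}\ge |B|^{-1/2}\|{\cdot}\|_{L^2(B)}$, and then appeals to the Plancherel orthogonality of $\mathscr{F}P_{+}u_0$ and $\mathscr{F}P_{-}u_0$ in $\bbC^4$ to see that $\|e^{iNt}P_{+}u_0+e^{-iNt}P_{-}u_0\|_{L^2}$ is constant in $t,N$, with a uniform tail bound outside $B$. Your route is to prove that $g_1=2\Re(P_{+}u_0)$ and $g_2=2\Im(P_{+}u_0)$ are linearly independent (your algebraic step using $(P_1^{+})^2=P_1^{+}$, $P_1^{+}P_1^{-}=0$ is clean and correct, and also covers the degenerate case $g_2=\lambda g_1$ with $\lambda=0$), and then invoke compactness of the circle to get a strictly positive infimum directly in $W^{1,\infty}$. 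Your argument is sharper in that it stays in $W^{1,\infty}$ throughout and never passes through $L^2$; the paper's version generalizes more transparently to the $H^s$ setting of Theorem~\ref{rmk_div}, where the analogue of your $A$ is simply $\|u_0-P_\mu u_0\|_{H^s}$ by orthogonality. Both arguments hinge on the same structural fact, special to $\mu=1$: the phase $p_1(\xi)\equiv 1$ is constant, so the ageostrophic propagation is a rigid rotation in a fixed two-plane rather than a dispersive flow.
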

\begin{remark}
    The maximal time $T_{\ast}$ of the local-in-time solution does not depend on $N$.
\end{remark}
\begin{remark} From the simple inequality
\begin{equation*}
    \|f\|_{L^q(0,T;W^{1,\infty})} \geq t_0^{\frac{1}{q}}\inf_{t\in[0,t_0]}\|f\|_{W^{1,\infty}}
\end{equation*}
for any nice vector-valued function $f:\bbR^3\times\bbR_{+}\to\bbR^3$, we can see that the non-convergence result we proved here is actually stronger than the non-convergence in the Strichartz space $L^q(0,T;W^{1,\infty})$ that was previously considered for the convergence results.
\end{remark}

\subsection{Fast convergence vs. slow non-convergence for $\mu\to 1$}
When $\Omega$ and $N$ grow, it is natural to expect that the ratio $\mu=\Omega/N$ changes accordingly. While non-convergence always happens for the fixed ratio $\mu=1$, it is still not clear what will occur in the variant case $\mu\to 1$. For the variant case $\mu\to\nu \neq 1$, see Corollary~\ref{cor_conv}. Surprisingly, it turns out that both convergence and non-convergence can happen separately, being contingent upon how fast $\Omega$ and $N$ grow to infinity. This suggests that not only the rotation-stratification ratio $\mu$ matters, the intensities of rotation and stratification themselves play the crucial roles which distinguish the three-scale singular limit problem from the previous one or two-scale singular limit results.

Let $\{\mu_k\}_{k\in\bbN}$ be any sequence of positive numbers satisfying $\mu_k\to 1$ as $k\to \infty$. Without loss of generality, we may assume $\mu_k\neq 1$ for every $k\in\bbN$. For each fixed $\mu_k$ with $k\in \bbN$, the constant $C_{\mu_k}$ appearing in \eqref{conv_1} is then well-defined.

\begin{definition}
We say that a sequence $\{(\Omega_k,N_k)\}_{k\in\bbN}$ of pairs of positive numbers is \emph{fast} if the sequence satisfies
\begin{equation}\label{fast}
    N_k=C_{\mu_k}h(k), \quad\mbox{and}\quad \Omega_k= N_k \mu_k
\end{equation}
for some function $h\in C(\bbR_{+})$ satisfying $h(x)\to\infty$ as $x\to \infty$. The map $x\mapsto x^\beta$ with $\beta>0$ can be chosen to be such $h$.
\end{definition}
\begin{definition}
  We say that the sequence is \emph{slow} if the sequence satisfies
\begin{equation}\label{slow}
    N_k=o(|1-\mu_k|^{-1}) \,\,\,\mbox{as}\,\,\,k\to \infty, \quad\mbox{and}\quad \Omega_k= N_k \mu_k.
\end{equation} One can see that $N_k=\frac{1}{|1-\mu_k|^{\beta}}$ with $0<\beta<1$ would be an example of \eqref{slow}.
  \end{definition}

Now we state our pseudo-dichotomy result between fast and slow sequences as follows.
\begin{theorem}\label{dich}
 Fix $u_0=(v_0,\theta_0)\in H^m$ with $\nb\cdot v_0=0$ for some $m\geq 7$. Let $\{(\Omega_k,N_k)\}_{k\in\bbN}$ be any sequence of pairs of positive numbers. For every $T>0$, and for any $k\in\bbN$, suppose that there exists a corresponding unique classical solution $u^{(k)}$ to \eqref{eq_main} on $[0,T]$. If the fast condition \eqref{fast} is satisfied, then there holds
\begin{equation}\label{valid}
    \lim_{k\to\infty}\|u^{(k)}-\nabla_1 \psi^{1}\|_{L^q(0,T;W^{1,\infty})} = 0.
\end{equation}
In contrast, if the slow condition \eqref{slow} is satisfied and $u_0 \not= P_1u_0$, then there holds
\begin{equation}\label{invalid}
    \liminf_{k\to\infty}\|u^{(k)}-\nabla_1 \psi^{1}\|_{L^q(0,T;W^{1,\infty})} \geq B,
\end{equation}
for some positive constant $B$ depending only on $u_0.$ Here $\psi^1$ is a continuous function which satisfies \eqref{QG}-\eqref{QG_def_initial} when $\mu = 1$ and $$\nabla_1\psi^1 \in C([0,T]);H^m(\mathbb{R}^3)) \cap C^1([0,T];H^{m-1}(\mathbb{R}^3))$$ for all $T>0$.
\end{theorem}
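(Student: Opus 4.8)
The plan is to treat the fast and slow cases separately, reducing each to a result already available: the quantitative convergence estimate \eqref{conv_1} of Theorem~\ref{thm1} for \eqref{valid}, and the non-convergence Theorem~\ref{thm3} for \eqref{invalid}. Throughout, $\nabla_{\mu_k}\psi^{\mu_k}$ denotes the QG flow solving \eqref{QG}-\eqref{QG_def_initial} at ratio $\mu=\mu_k$.

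\textbf{Fast case.} By the triangle inequality it suffices to show that $\|u^{(k)}-\nabla_{\mu_k}\psi^{\mu_k}\|_{L^q(0,T;W^{1,\infty})}\to0$ and $\|\nabla_{\mu_k}\psi^{\mu_k}-\nabla_1\psi^1\|_{L^q(0,T;W^{1,\infty})}\to0$. For the first quantity I would insert the fast condition \eqref{fast} into \eqref{conv_1} (which applies since $\mu_k\neq1$): the factor $C_{\mu_k}$ on the right of \eqref{conv_1} is cancelled against $N_k=C_{\mu_k}h(k)$, leaving a bound of size $O(h(k)^{-1})$ times a fixed power of $\|u_0\|_{H^m}$, which vanishes as $k\to\infty$. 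For the second quantity I would prove continuity of the QG flow in the parameter $\mu$ near $\mu=1$: by \eqref{Pmu} the symbol $b_\mu(\xi)=\xi_\mu/|\xi_\mu|$ is continuous in $\mu$, so $P_{\mu_k}u_0\to P_1u_0$ in $H^m$ and hence the QG initial data converge; then, writing the equation for $q^{\mu_k}-q^1$ (with $q^\mu=\Delta_\mu\psi^\mu$ and $\mathbf{v}_H^\mu=\nabla_{H}^{\perp}\Delta_\mu^{-1}q^\mu$) and using that $\Delta_\mu^{-1}-\Delta_1^{-1}=(1-\mu^2)\Delta_\mu^{-1}\partial_3^2\Delta_1^{-1}$ is a Fourier multiplier of order $-2$ with norm $O(|1-\mu|)$, a Gr\"onwall estimate---with the $\mu$-uniform smooth bounds for $q^{\mu_k}$ and $q^1$ that follow from the transport structure ($\|q^\mu\|_{L^\infty}$ being conserved)---gives $\|\nabla_{\mu_k}\psi^{\mu_k}-\nabla_1\psi^1\|_{L^\infty(0,T;H^{m-1})}\to0$, which embeds into $L^q(0,T;W^{1,\infty})$ since $m\ge7$.

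\textbf{Slow case.} The key observation is that, under \eqref{slow}, the system \eqref{eq_main} with parameters $(\Omega_k,N_k)$ is a small perturbation of the same system with parameters $(N_k,N_k)$---that is, with ratio exactly $1$---because the only mismatch is the rotation coefficient and it has size $|\Omega_k-N_k|=N_k|1-\mu_k|\to0$ precisely by \eqref{slow}. So let $\tilde u^{(k)}$ solve \eqref{eq_main} with $\Omega=N=N_k$ and the same data $u_0$, and put $w^{(k)}=u^{(k)}-\tilde u^{(k)}$, so $w^{(k)}(0)=0$. In the equation for $w^{(k)}$ all the large skew-symmetric contributions (the $N_ke_3\times(\cdot)$ rotation term, the coupled $N_k$-stratification terms, and the pressure gradient) vanish identically in the $H^{m-1}$ energy identity, so that only the nonlinear commutators and the forcing $-(\Omega_k-N_k)e_3\times v^{(k)}$ survive. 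Since the $H^m$ norms of $u^{(k)}$ and $\tilde u^{(k)}$ stay bounded on a fixed interval $[0,\tau_0]$, with $\tau_0$ depending only on $u_0$ and \emph{not} on $(\Omega_k,N_k)$ (inviscid energy method, the rotation and stratification terms being invisible to it), Gr\"onwall yields $\sup_{[0,\tau_0]}\|w^{(k)}\|_{H^{m-1}}\lesssim_{u_0}N_k|1-\mu_k|\to0$, hence $\|u^{(k)}-\tilde u^{(k)}\|_{L^q(0,T;W^{1,\infty})}\to0$. Now $\tilde u^{(k)}$ is exactly the ratio-$1$ Boussinesq solution with $N=N_k\to\infty$ and data $u_0\neq P_1u_0$, so Theorem~\ref{thm3} applies along the sequence $N_k$ and gives $\liminf_k\inf_{t\in[0,\tau_0]}\|\tilde u^{(k)}-\nabla_1\psi^1\|_{W^{1,\infty}}\ge\tfrac12 A$ (taking $\tau_0\le t_0$). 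Combining these two facts with the elementary bound $\|f\|_{L^q(0,T;W^{1,\infty})}\ge\tau_0^{1/q}\inf_{t\in[0,\tau_0]}\|f\|_{W^{1,\infty}}$ (as in the remark after Theorem~\ref{thm3}, for $T\ge\tau_0$) yields \eqref{invalid} with $B=\tfrac12\tau_0^{1/q}A$, which depends only on $u_0$.

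\textbf{Main obstacle.} The substantive step is the uniform-in-$k$ a priori control used in the slow case: one must know that the $H^m$ norms of both the Boussinesq solutions $u^{(k)}$ and the auxiliary ratio-$1$ solutions stay bounded on a common time interval independently of $\Omega_k,N_k$, so that the difference estimate closes with the genuinely small factor $N_k|1-\mu_k|$ rather than being swamped by the $O(N_k)$ coefficients in the equations; this rests on the skew-symmetry of the rotation and stratification terms, which one must also exploit to see that those terms drop out of the equation for $w^{(k)}$. The continuity of the QG flow in $\mu$ needed for the fast case, together with the remaining estimates, is comparatively routine and fits comfortably within the regularity $m\ge7$. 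A minor bookkeeping point throughout is fitting together the short time intervals ($t_0$ from Theorem~\ref{thm3}, the uniform local-existence time, and the prescribed $T$); one should also note that the assertion is understood in the regime $N_k\to\infty$, which the fast condition forces automatically and which is implicit for the slow one.
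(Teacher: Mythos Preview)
Your proposal is correct and follows essentially the same route as the paper: for the fast case you split via the triangle inequality into $\|u^{(k)}-\nabla_{\mu_k}\psi^{\mu_k}\|$ (handled by \eqref{conv_1} and the fast condition) plus $\|\nabla_{\mu_k}\psi^{\mu_k}-\nabla_1\psi^1\|$ (continuity of the QG flow in $\mu$, which the paper states as Proposition~\ref{conv_prop}); for the slow case you introduce an auxiliary ratio-$1$ Boussinesq solution, bound $\|u^{(k)}-\tilde u^{(k)}\|$ by a Gr\"onwall argument exploiting skew-symmetry (the paper's Proposition~\ref{loc_diff}), and invoke Theorem~\ref{thm3} on $\tilde u^{(k)}$. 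The only cosmetic difference is that the paper sets $\widetilde{\Omega}_k=\widetilde{N}_k:=\Omega_k$ (keeping $\Omega$ and varying $N$), whereas you set $\widetilde{\Omega}_k=\widetilde{N}_k:=N_k$ (keeping $N$ and varying $\Omega$); either way the mismatch has size $N_k|1-\mu_k|\to0$.
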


Lastly, we give a new explicit lower bound for the growth $C_\mu$ in \eqref{conv_1} as $\mu$ tends to $1$, leading to the first blow-up result of $C_\mu$ to the best of the authors' knowledge. This provides a quantitative information on the convergence process. To this end, we exploit both \eqref{valid} and \eqref{invalid}.
\begin{corollary}\label{cor_div}
Let $\{\mu_k\}_{k\in\bbN}$ be any sequence of positive numbers satisfying $\mu_k\to 1$ as $k\to\infty$. For the constant $C_\mu$ appearing in \eqref{conv_1}, there holds
\begin{equation*}
    \frac{1}{C_{\mu_k}} = O\left(|1-\mu_k|\right) \quad \mbox{as}\quad k\to \infty.
\end{equation*}
\end{corollary}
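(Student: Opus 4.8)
The plan is to derive Corollary~\ref{cor_div} purely as a logical consequence of the pseudo-dichotomy in Theorem~\ref{dich}, by a proof by contradiction that plays the fast and slow regimes against each other. Suppose the asserted bound $\tfrac{1}{C_{\mu_k}} = O(|1-\mu_k|)$ fails along the given sequence $\{\mu_k\}$. Then, passing to a subsequence (still denoted $\{\mu_k\}$), we have $|1-\mu_k| = o\!\left(\tfrac{1}{C_{\mu_k}}\right)$ as $k\to\infty$, i.e. $C_{\mu_k}|1-\mu_k|\to 0$. The idea is that this is exactly the hypothesis one needs to build a single sequence of pairs $(\Omega_k,N_k)$ that is simultaneously \emph{fast} and \emph{slow}, which is impossible because the two conclusions \eqref{valid} and \eqref{invalid} of Theorem~\ref{dich} are mutually exclusive (one forces the limit of $\|u^{(k)}-\nabla_1\psi^1\|_{L^q(0,T;W^{1,\infty})}$ to be $0$, the other bounds its liminf below by a fixed $B>0$), provided we also arrange $u_0\neq P_1 u_0$, which we may simply take as a standing hypothesis by choosing such $u_0$ (note $B$ and the construction depend only on $u_0$, and the corollary's statement about $C_\mu$ does not reference $u_0$).

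The construction: set $N_k := C_{\mu_k}\cdot h(k)$ for a suitable slowly growing $h\in C(\bbR_+)$ with $h(x)\to\infty$, and $\Omega_k := N_k\mu_k$. By definition this sequence is \emph{fast}, so \eqref{valid} applies and the $L^q(0,T;W^{1,\infty})$-difference tends to $0$. On the other hand, we want the same sequence to be \emph{slow}, i.e. $N_k = o(|1-\mu_k|^{-1})$, equivalently $N_k|1-\mu_k|\to 0$, i.e. $C_{\mu_k}h(k)|1-\mu_k|\to 0$. Since by our contradiction hypothesis $C_{\mu_k}|1-\mu_k|\to 0$, we can always choose $h$ growing slowly enough — concretely, pick $h(k)\to\infty$ with $h(k)\le \left(C_{\mu_k}|1-\mu_k|\right)^{-1/2}$ for all large $k$ (this right-hand side tends to $\infty$, so such a continuous increasing $h$ exists), giving $C_{\mu_k}h(k)|1-\mu_k| \le \left(C_{\mu_k}|1-\mu_k|\right)^{1/2}\to 0$. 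Thus $\{(\Omega_k,N_k)\}$ satisfies both \eqref{fast} and \eqref{slow}. We also need the classical solutions $u^{(k)}$ on $[0,T]$ to exist for this particular sequence; since $T_*$ in Theorem~\ref{thm3} does not depend on $N$ and the standing hypotheses of Theorem~\ref{dich} are about existence of solutions, this is assumed in the same way as in Theorem~\ref{dich}, or one restricts $T$ below the ($N$-independent) maximal existence time. Applying Theorem~\ref{dich} twice to this single sequence yields $0 = \lim_{k\to\infty}\|u^{(k)}-\nabla_1\psi^1\|_{L^q(0,T;W^{1,\infty})} \ge B > 0$, a contradiction. Hence the supposed failure cannot occur, and $\tfrac{1}{C_{\mu_k}} = O(|1-\mu_k|)$.

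I expect the only real subtlety — the main obstacle — to be the bookkeeping around the quantifiers: ensuring that the \emph{same} $h$ can be chosen to make $\{(\Omega_k,N_k)\}$ both fast and slow, and that the solutions $u^{(k)}$ and the limit object $\psi^1$ are the ones to which both halves of Theorem~\ref{dich} legitimately apply (same $u_0$, same $T$, same $\mu=1$ limit system). The passage to a subsequence is harmless: if $\tfrac{1}{C_{\mu_k}} = O(|1-\mu_k|)$ fails, then by definition of $O$ there is a subsequence along which $\tfrac{|1-\mu_k|}{1/C_{\mu_k}} = C_{\mu_k}|1-\mu_k|\to 0$, and proving the corollary for an arbitrary sequence is equivalent to ruling this out. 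Everything else is a direct invocation of the already-established Theorem~\ref{dich}; no PDE analysis or harmonic-analytic estimate is needed at this stage, since the quantitative content has been front-loaded into the definition of the fast/slow thresholds and into $C_\mu$ itself.
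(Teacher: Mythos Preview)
Your proposal is correct and follows essentially the same approach as the paper: assume $C_{\mu_k}|1-\mu_k|\to 0$ along a subsequence, then construct a single sequence $(\Omega_k,N_k)$ that is simultaneously fast and slow by taking $N_k = C_{\mu_k}h(k)$ with $h(k)$ of order $(C_{\mu_k}|1-\mu_k|)^{-1/2}$, and derive a contradiction from the two halves of Theorem~\ref{dich}. The paper makes the concrete choice $N_k = \sqrt{C_{\mu_k}/|1-\mu_k|}$, i.e.\ $h(k) = (C_{\mu_k}|1-\mu_k|)^{-1/2}$ with equality, which is exactly the threshold you identify; your extra care about interpolating $h$ to a continuous function on $\bbR_+$ and fixing $u_0\neq P_1 u_0$ is appropriate and only makes the argument cleaner.
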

\begin{remark} The above corollary says that $C_\mu$, as a function in $\mu$, increases to infinity as $\mu\to 1$ at least as fast as the mapping $\mu\mapsto\frac{1}{|1-\mu|}$.
\end{remark}

The first part \eqref{valid} of the above theorem supports the validity of the QG equations as the approximation for rotating stratified fluids even near the exact balance between rotation and stratification, provided that we consider an asymptotic regime of radically increasing intensities. Meanwhile, the second part \eqref{invalid} says that such quasi-geostrophic approximation might be mathematically invalid near the exact balance in the regime of slowly growing intensities. Together with Corollary~\ref{cor_conv}, the above theorem covers the whole range $\mu\to\nu\in(0,\infty)$ for the rotation-stratification ratio. The open question that seems currently out of reach regards the \emph{moderately increasing} sequences that belong to neither \eqref{fast} nor \eqref{slow}, when $\mu$ tends to one.

\subsection{$\mu$-universal non-convergence in $L_t^{\infty} H_x^{s}$}

The Sobolev space $W^{1,\infty}$, which is $L^\infty$-based, is nearly \emph{marginal} in the following sense: while $W^{1,\infty}$ gives us the convergence results by well-capturing the dispersive nature of the linear operator for any fixed $\mu\neq 1$, it still allows for non-convergence when $\mu=1$ is fixed. Roughly saying, the space $W^{1,\infty}$ successfully distinguishes the isotropic case $\mu=1$ from the anisotropic case $\mu\neq 1$.

In contrast, such a dichotomy for the fixed rotation-stratification ratio $\mu$ ceases to work in the smaller $L^2$-based space $H^{m-3}$. One can modify the proof of Theorem~\ref{thm3} to show that non-convergence arises universally in $H^{m-3}$ for any $\mu\in(0,\infty)$ on a sufficiently short time interval. See Theorem~\ref{rmk_div} for the precise statement. The universal non-convergence result can be interpreted as that even the slightly smaller $L^2$-based space $H^{m-3}$ might be inappropriate to study the ``phase scrambling" of the solutions via rotation and stratification. 
\begin{theorem}[A variant of the result in \cite{Grenier}]\label{rmk_div}
    Let $\mu \in (0,\infty)$ and $m \in \mathbb{N}$ with $m \geq 6$. For any $u_0=(v_0,\theta_0) \in H^m(\mathbb{R}^3)$ with $\nabla \cdot v_0 = 0$ and $u_0 - P_{\mu}u_0 \neq 0$, the unique classical local-in-time solution $u=(v,\theta)$ to \eqref{eq_main} satisfies
     \begin{equation*}
        \liminf_{N\to\infty}\left(\inf_{t \in [0,t_0]} \| u - \nabla_{\mu} \psi^{\mu} \|_{H^{m-3}}\right) \geq \frac 12 \| u_0 - P_{\mu} u_0 \|_{H^{m-3}}
    \end{equation*}
    for sufficiently small $t_0\in(0,T_\ast)$ where $T_{\ast}$ is the maximal existence time of $u$ satisfying
 $$u \in C([0,T_\ast);H^m(\mathbb{R}^3)) \cap C^1([0,T_\ast);H^{m-1}(\mathbb{R}^3)).$$ Here $\psi^\mu$ is a continuous function which satisfies \eqref{QG}-\eqref{QG_def_initial} and $$\nabla_\mu\psi^\mu \in C([0,T]);H^m(\mathbb{R}^3)) \cap C^1([0,T];H^{m-1}(\mathbb{R}^3))$$ for all $T>0$.
\end{theorem}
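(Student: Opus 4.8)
The plan is to run a Grenier-type argument \cite{Grenier}: an $L^2$-based norm is blind to the rapid phase oscillations generated by strong rotation and stratification, simply because the linear propagator of \eqref{eq_main} is an isometry of every Sobolev space, regardless of its dispersion relation (and hence regardless of $\mu$). Thus the ageostrophic part of the solution cannot relax to zero in $H^{m-3}$ over a short time interval, no matter how large $N$ is. Concretely, I would reduce the estimate to a lower bound for $\|(\mathrm{Id}-P_\mu)u(t)\|_{H^{m-3}}$ and then propagate the nonzero initial datum $u_0-P_\mu u_0$ by Duhamel's formula.

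First I would fix the functional framework. Applying the Leray projector $\mathbb{P}$ onto divergence-free fields to the velocity equation in \eqref{eq_main} removes the pressure and, with $u=(v,\theta)$ and $\Omega=\mu N$, recasts the system as $\partial_t u=N\mathbb{L}_\mu u+\mathcal{N}(u)$, where $\mathcal{N}(u)=(-\mathbb{P}(v\cdot\nabla)v,\,-(v\cdot\nabla)\theta)$ and $\mathbb{L}_\mu$ is the constant-coefficient linear operator carrying the Coriolis, buoyancy and pressure terms, i.e.\ the generator of the propagator $e^{iNtp_\mu(D)}$ analyzed in Section~\ref{eigen}. I will use three facts recorded there: (i) $\mathbb{L}_\mu$ is skew-adjoint on the divergence-free subspace of $L^2(\bbR^3;\bbC^4)$ and is a Fourier multiplier, so $e^{tN\mathbb{L}_\mu}$ commutes with $\langle D\rangle^s$ and with $P_\mu$ and is unitary on $H^s(\bbR^3)$ for every $s$; (ii) $P_\mu$ is the fiberwise orthogonal projection onto $\ker\mathbb{L}_\mu=\mathrm{span}\,b_\mu(\xi)$, so $P_\mu$ and $\mathrm{Id}-P_\mu$ have operator norm $1$ on each $H^s$; (iii) since $\mathscr{F}(\nabla_\mu\psi^\mu)(\xi)=2\pi i\,|\xi_\mu|\,\mathscr{F}\psi^\mu(\xi)\,b_\mu(\xi)$ is, for each $\xi$, a scalar multiple of $b_\mu(\xi)$, the QG solution lies fiberwise in the range of $P_\mu$, i.e.\ $(\mathrm{Id}-P_\mu)\nabla_\mu\psi^\mu\equiv 0$. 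Combining (ii) and (iii), for every $t$,
\[
\|u(t)-\nabla_\mu\psi^\mu(t)\|_{H^{m-3}}\;\ge\;\|(\mathrm{Id}-P_\mu)\bigl(u(t)-\nabla_\mu\psi^\mu(t)\bigr)\|_{H^{m-3}}\;=\;\|(\mathrm{Id}-P_\mu)u(t)\|_{H^{m-3}},
\]
so it suffices to bound the ageostrophic component $u^\perp:=(\mathrm{Id}-P_\mu)u$ from below.

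Next I would run Duhamel for $u^\perp$. Since $\mathbb{L}_\mu$ commutes with $P_\mu$ and annihilates its range, $u^\perp$ solves $\partial_t u^\perp=N\mathbb{L}_\mu u^\perp+(\mathrm{Id}-P_\mu)\mathcal{N}(u)$ with $u^\perp(0)=u_0-P_\mu u_0$, hence
\[
u^\perp(t)=e^{tN\mathbb{L}_\mu}(u_0-P_\mu u_0)+\int_0^t e^{(t-s)N\mathbb{L}_\mu}(\mathrm{Id}-P_\mu)\mathcal{N}(u(s))\,\mathrm{d}s .
\]
Using $H^{m-3}$-unitarity of $e^{tN\mathbb{L}_\mu}$ on the first term and the triangle inequality together with $\|\mathrm{Id}-P_\mu\|_{H^{m-3}\to H^{m-3}}\le1$ on the second,
\[
\|u^\perp(t)\|_{H^{m-3}}\;\ge\;\|u_0-P_\mu u_0\|_{H^{m-3}}-\int_0^t\|\mathcal{N}(u(s))\|_{H^{m-3}}\,\mathrm{d}s .
\]
Because $m\ge6$ makes $H^{m-2}$ a Banach algebra, standard product (Kato--Ponce) estimates give $\|\mathcal{N}(u)\|_{H^{m-3}}\lesssim\|u\|_{H^{m-2}}^2\lesssim\|u\|_{H^m}^2$. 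Crucially, the $H^m$ energy estimate does not see the skew-adjoint term $N\mathbb{L}_\mu$, so there are $T_\ast>0$ and $M>0$ depending only on $\|u_0\|_{H^m}$ with $\sup_{[0,T_\ast]}\|u(t)\|_{H^m}\le M$, uniformly in $N$ (cf.\ the remark after Theorem~\ref{thm3}); hence $\sup_{s\in[0,T_\ast]}\|\mathcal{N}(u(s))\|_{H^{m-3}}\le K$ for some $K=K(u_0)$ independent of $N$. Choosing $t_0\in(0,T_\ast)$ with $Kt_0\le\tfrac12\|u_0-P_\mu u_0\|_{H^{m-3}}$ — a choice depending only on $u_0$ — yields, for every $N$ and every $t\in[0,t_0]$, the bound $\|u^\perp(t)\|_{H^{m-3}}\ge\tfrac12\|u_0-P_\mu u_0\|_{H^{m-3}}$. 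Combining with the reduction above and taking $\liminf_{N\to\infty}$ gives the theorem.

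The proof is short because its two substantive inputs are imported: the spectral description of $\mathbb{L}_\mu$ with the identification of $P_\mu$ as its kernel projection (Section~\ref{eigen}), and the $N$-independent local well-posedness of \eqref{eq_main} in $H^m$. The one conceptual point, which is exactly where the distinction $\mu=1$ versus $\mu\neq1$ evaporates, is that the mechanism uses only the $L^2$-isometry of the linear flow, not its dispersive (Strichartz) decay — which is why the $W^{1,\infty}$-dichotomy of the introduction (convergence for $\mu\neq1$, non-convergence for $\mu=1$) collapses in $H^{m-3}$. I do not foresee a genuine obstacle; the only routine checks are the legitimacy of the Duhamel representation at the regularity $u\in C([0,T_\ast);H^m)\cap C^1([0,T_\ast);H^{m-1})$ — immediate since $\mathbb{L}_\mu$ loses no derivatives and $\mathcal{N}(u)$ is continuous into $H^{m-1}$ — the $H^s$-boundedness of $P_\mu$ (its symbol is a rank-one orthogonal projection of $\bbC^4$), and the bookkeeping that every constant entering the choice of $t_0$ is independent of $N$.
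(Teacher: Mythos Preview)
Your proof is correct and takes a genuinely more direct route than the paper's. The paper mirrors its proof of Theorem~\ref{thm3}: it introduces the modified linear solutions $u^{\pm}$ from \eqref{MLS}, decomposes $u-u^\mu=u^{+}+u^{-}+\mathcal{E}$, and then separately (i) extracts a $1/N$ decay of the inhomogeneous parts of $u^\pm$ via integration by parts in time, (ii) controls the error $\mathcal{E}$ by a Gr\"onwall argument, and (iii) uses the fiberwise orthogonality of $b_+$ and $b_-$ to show the linear propagation has constant $H^s$ norm equal to $\|u_0-P_\mu u_0\|_{H^s}$. You instead observe that $(\mathrm{Id}-P_\mu)\nabla_\mu\psi^\mu\equiv 0$, reducing the task to a lower bound on $\|(\mathrm{Id}-P_\mu)u\|_{H^{m-3}}$, and then run Duhamel directly for $(\mathrm{Id}-P_\mu)u$, using only unitarity of the propagator and the $N$-uniform $H^m$ bound to absorb the nonlinear integral by a small-time choice. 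This bypasses the auxiliary system \eqref{MLS}, the $1/N$ integration-by-parts step, and the separate $\mathcal{E}$-estimate. What the paper's longer argument buys is uniformity of presentation with Theorem~\ref{thm3} (where the $W^{1,\infty}$ norm genuinely requires the $u^\pm/\mathcal{E}$ decomposition and the $1/N$ gain) and compatibility with the abstract framework of Theorem~\ref{thm_gen}; your argument buys brevity and makes transparent that the only mechanism at work in $H^s$ is the isometry of the skew-adjoint flow. One cosmetic point: your bound $\sup_{[0,T_\ast]}\|u\|_{H^m}\le M$ should read $\sup_{[0,T]}$ for any fixed $T<T_\ast$ (cf.\ \eqref{loc_est}), but since $t_0<T_\ast$ this is harmless.
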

\begin{remark}The theorem indicates that the nonzero status of the initial difference $u_0-P_\mu u_0$ is sustained for a short amount of time not depending on $N$ with respect to the $H^s$ norms.
\end{remark}
\begin{remark}
    The general result in \cite{Grenier} on $\bbR^3$ can be applied to our setting when we work with $H^s$ norms; Theorem 3.5 of \cite{Grenier} establishes convergence of the slow limit part of solution to the target profile in $H^s$ norms that are weaker than the one for which uniform bounds hold. The implication is that if the fast oscillating part is initially non-zero then at least on some short time interval the discrepancy between the full solution and the target profile is bounded from below on such times. So the above non-convergence theorem is essentially a consequence of \cite{Grenier}. In this paper, we provide an elementary proof that is consistent with our approach.
\end{remark}

\subsection{$\mu$-universal convergence in $H^s$}
Despite such incapability of $L^2$-based spaces in measuring the effects of $\Omega$ and $N$ on dispersion, we still can specify a special class for initial data which ensures the convergence for any $\mu\in(0,\infty)$ even in $H^{m-3}$. By imposing a smallness condition on certain components of the initial data in terms of the eigenvectors of the linear propagator, we establish the following universal convergence result.

\begin{theorem}[A variant of the result in \cite{KM}]\label{thm2}
Let $\mu \in (0,\infty)$ and $m\in \mathbb{N}$ with $m\geq 6$. Fix any time $T > 0$ and any initial data $u_0=(v_0,\theta_0) \in H^m(\mathbb{R}^3)$ with $\nabla \cdot v_0 = 0$. If $u_0$ satisfies $CTe^{CT} \| u_0 - P_\mu u_0 \|_{H^{m-2}} \leq 1$ for some $C>0$, then there exists a constant $R=R(m,\mu,T,\|P_{\mu}u_0\|_{H^m})>0$ such that \eqref{UEQ} possesses a unique classical solution 
$$u \in C([0,T];H^m(\mathbb{R}^3)) \cap C^1([0,T];H^{m-1}(\mathbb{R}^3))$$
provided that $\sqrt{\Omega^2+N^2}>R$. Moreover, if we assume further that $u_0 = P_{\mu} u_0$, there exists a constant $C_\mu'=C_\mu'(m,\mu,T,\| u_0 \|_{H^m}) > 0$  such that
\begin{equation}\label{conv_2}
\sup_{t \in [0,T]} \| u - \nb_\mu \psi^\mu \|_{H^{m-3}} \leq \frac {C_\mu'}{\sqrt{\Omega^2+N^2}}
\end{equation}
as long as $\sqrt{\Omega^2+N^2} > R$. Here $\psi^\mu$ is a continuous function which satisfies \eqref{QG}-\eqref{QG_def_initial} and $$\nabla_\mu\psi^\mu \in C([0,T]);H^m(\mathbb{R}^3)) \cap C^1([0,T];H^{m-1}(\mathbb{R}^3))$$ for all $T>0$.
\end{theorem}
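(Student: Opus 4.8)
This is a singular-limit theorem of Klainerman--Majda type. The plan is to run an energy argument for the symmetric system \eqref{UEQ} in which the large skew-adjoint part drops out, to compare the solution with the globally well-posed limit flow \eqref{QG} in a norm that ignores the top $H^m$ scale, and to close by a continuation/bootstrap argument in which the hypotheses on $\|u_0-P_\mu u_0\|_{H^{m-2}}$ and on $\sqrt{\Omega^2+N^2}$ enter precisely where they are needed. Write \eqref{UEQ} as $\rd_t u+\calQ(u,u)+\calL u=0$, with $\calQ(u,u)=(\bbP[(v\cdot\nb)v],(v\cdot\nb)\theta)$ the Leray-projected transport nonlinearity — a first-order bilinear map obeying the antisymmetry $|\langle\calQ(u,u),u\rangle_{H^s}|\lesssim\|\nb u\|_{L^\infty}\|u\|_{H^s}^2$ for divergence-free $v$ — and $\calL=\calL_{\Omega,N}$ the zeroth-order skew-adjoint Fourier multiplier encoding rotation and stratification. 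By the eigenvalue analysis of Section~\ref{eigen}, for fixed $\mu=\Omega/N$ the operator $\calL$ annihilates $\mathrm{Ran}\,P_\mu$ and is invertible on $\mathrm{Ran}(I-P_\mu)$, with
\[
\|\calL^{-1}(I-P_\mu)w\|_{H^s}\le c_\mu(\Omega^2+N^2)^{-1/2}\|w\|_{H^s},\qquad c_\mu:=\tfrac{\sqrt{1+\mu^2}}{\min(1,\mu)},
\]
in every $H^s$, since the slow eigenvalue is $0$ and the fast ones are $\pm iNp_\mu(\xi)$ with $\min(1,\mu)\le p_\mu(\xi)\le\max(1,\mu)$; note $c_\mu<\infty$ for \emph{every} $\mu\in(0,\infty)$, in particular at $\mu=1$. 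The profile $u_{QG}:=\nb_\mu\psi^\mu$ attached to the QG solution $\psi^\mu$ lies on the slow manifold: $P_\mu u_{QG}=u_{QG}$, $\calL u_{QG}=0$, and $\rd_t u_{QG}+P_\mu\calQ(u_{QG},u_{QG})=0$; since \eqref{QG} is globally well-posed in high Sobolev spaces, $K_0:=\sup_{[0,T]}(\|u_{QG}\|_{H^m}+\|\rd_t u_{QG}\|_{H^{m-1}})$ is finite and depends only on $m,\mu,T,\|P_\mu u_0\|_{H^m}$.

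Put $\veps:=(\Omega^2+N^2)^{-1/2}$ and $z:=u-u_{QG}$, so that $z(0)=u_0-P_\mu u_0$ and
\[
\rd_t z+\calL z=-\bigl[\calQ(u,u)-\calQ(u_{QG},u_{QG})\bigr]-F,\qquad F:=(I-P_\mu)\calQ(u_{QG},u_{QG}).
\]
The forcing $F$ is $O(1)$ but purely ``fast'', so I absorb it by a one-step normal-form change of variables (equivalently, integration by parts in time against $e^{t\calL}$): set $z^{(1)}:=-\calL^{-1}(I-P_\mu)\calQ(u_{QG},u_{QG})$, so that $\calL z^{(1)}=-F$, $\|z^{(1)}\|_{H^{m-1}}\lesssim c_\mu\veps K_0^2$ and $\|\rd_t z^{(1)}\|_{H^{m-2}}\lesssim c_\mu\veps K_0^2$. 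Then $z^{(2)}:=z-z^{(1)}$ solves $\rd_t z^{(2)}+\calL z^{(2)}=-[\calQ(u,u)-\calQ(u_{QG},u_{QG})]+\calL^{-1}(I-P_\mu)\rd_t\calQ(u_{QG},u_{QG})$, where the last term is $O(c_\mu\veps K_0^2)$ in $H^{m-2}$. The crucial point is that an $H^{m-2}$ energy estimate for $z^{(2)}$ — the singular term drops by skew-ness, and the difference $\calQ(u,u)-\calQ(u_{QG},u_{QG})$ (bilinear in $z$ and $u+u_{QG}$) is handled by the standard commutator and product bounds, which bring in only $\|\nb u\|_{L^\infty}\lesssim\|u\|_{H^{m-2}}\le K_0+\|z\|_{H^{m-2}}$, the controlled norms of $u_{QG}$, and $\|z^{(1)}\|_{H^{m-1}}$, but \emph{never} $\|u\|_{H^m}$ — yields, as long as $\|z\|_{H^{m-2}}\le1$,
\[
\sup_{[0,t]}\|z\|_{H^{m-2}}\ \le\ \bigl(\|u_0-P_\mu u_0\|_{H^{m-2}}+c_\mu\veps\,\calP(K_0)(1+T)\bigr)e^{Ct},\qquad C:=C_0(K_0+2),
\]
for a universal constant $C_0$ and a fixed polynomial $\calP$.

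For each $(\Omega,N)$ let $u\in C([0,T_\ast);H^m)\cap C^1([0,T_\ast);H^{m-1})$ be the maximal solution of \eqref{UEQ} furnished by the local theory for this Boussinesq-type system (the extra zeroth-order skew terms affect neither local existence nor the continuation criterion). On $[0,\min(T,T_\ast))$ run the continuity argument for the bootstrap bound $\|z\|_{H^{m-2}}\le1$: taking the constant in the hypothesis to be the $C$ above, enlarged if necessary so that $CT\ge4$, the assumption $CTe^{CT}\|u_0-P_\mu u_0\|_{H^{m-2}}\le1$ makes the data term $\le\tfrac14$, and choosing $R=R(m,\mu,T,\|P_\mu u_0\|_{H^m})$ so that $c_\mu\veps\calP(K_0)(1+T)e^{CT}\le\tfrac14$ whenever $\sqrt{\Omega^2+N^2}>R$ makes the remaining term $\le\tfrac14$; hence $\|z\|_{H^{m-2}}\le\tfrac34<1$ there and the bootstrap closes. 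Consequently $\|\nb u\|_{L^\infty}\lesssim\|u\|_{H^{m-2}}\le K_0+1$ on $[0,\min(T,T_\ast))$, so the $H^m$ energy inequality $\frac{d}{dt}\|u\|_{H^m}^2\lesssim\|\nb u\|_{L^\infty}\|u\|_{H^m}^2$ keeps $\|u(t)\|_{H^m}$ bounded up to $\min(T,T_\ast)$; by maximality of $T_\ast$ this forces $T_\ast>T$ and yields $u\in C([0,T];H^m)\cap C^1([0,T];H^{m-1})$, while uniqueness follows from an $H^{m-1}$ estimate on the difference of two solutions (again the skew term drops). Finally, if $u_0=P_\mu u_0$ then $z(0)=0$, the displayed bound becomes $\sup_{[0,T]}\|u-\nb_\mu\psi^\mu\|_{H^{m-2}}\le c_\mu\calP(K_0)(1+T)e^{CT}\,\veps$, and since $\|\cdot\|_{H^{m-3}}\le\|\cdot\|_{H^{m-2}}$ this gives \eqref{conv_2} with $C_\mu'=c_\mu\calP(K_0)(1+T)e^{CT}$, depending only on $m,\mu,T,\|u_0\|_{H^m}$.

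The main obstacle is promoting existence from the (data-dependent, but $(\Omega,N)$-uniform) short time to the \emph{prescribed} interval $[0,T]$ without any periodic-box resonance cancellation. The resolution is the observation that the comparison estimate for $z=u-u_{QG}$ is \emph{self-contained at the $H^{m-2}$ level}: it closes using only $\|\nb u\|_{L^\infty}$, the globally controlled QG norms $K_0$, and the $\veps$-corrector, so it can be bootstrapped on the maximal existence interval and only afterwards fed into the $H^m$ energy inequality to preclude blow-up before time $T$. The constant $C$ in the hypothesis is exactly the Gr\"onwall exponent of this estimate, and $R$ is exactly the threshold that makes the normal-form remainder negligible; the normal-form/integration-by-parts-in-time step producing the $(\Omega^2+N^2)^{-1/2}$ gain, together with the finiteness of $c_\mu$ at every fixed $\mu$, are the structural inputs that make the scheme quantitative.
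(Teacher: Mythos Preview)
Your proof is correct and follows essentially the same strategy as the paper: both obtain the $(\Omega^2+N^2)^{-1/2}$ gain by exploiting the invertibility of the skew operator on the fast subspace, close an $H^{m-2}$/$H^{m-3}$--level energy estimate for the difference by bootstrap using the smallness of $\|u_0-P_\mu u_0\|_{H^{m-2}}$, and then feed the resulting Lipschitz control into the $H^m$ energy inequality to extend the solution to $[0,T]$.

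The only organizational difference is bookkeeping. The paper introduces the modified linear system \eqref{MLS}, decomposes $u=u^\mu+u^{+}+u^{-}+\mathcal{E}$, and obtains the $1/N$ gain by integrating by parts in time inside the Duhamel integral for $u^\pm$ (the estimate \eqref{Hs_upm_est}); the energy estimate is then run on $\mathcal{E}$ at level $H^{s}$ with $s=m-3$. You instead keep the free fast evolution inside $z^{(2)}$ (harmless by skew-adjointness) and peel off only the stationary normal-form corrector $z^{(1)}=-\calL^{-1}(I-P_\mu)\calQ(u_{QG},u_{QG})$, which is exactly the boundary term from the paper's integration by parts. Your packaging is slightly more direct for this particular theorem; the paper's MLS decomposition has the advantage of being reused verbatim across Theorems~\ref{thm3}, \ref{rmk_div}, and \ref{thm2}.
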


\begin{remark}
    This includes $\mu=1$ with $C_\mu'$ bounded near $\mu= 1$, which corroborates that the non-convergence result in Theorem \ref{thm3} heavily relies on certain components of the initial data, $P_{\pm}u_0$, instead of just $u_0$.
\end{remark}

\begin{remark}
    It should be noted that the results by Klainerman and Majda \cite{KM} are valid in the $H^s$ settings; the Lagrangian-multiplier term $\nabla p$ disappears in the standard $H^s$ energy estimates on $\bbR^3$ due to the divergence-free constraint. Here, we give another proof that is consistent with our approach. See \cite{general1} as well. 
\end{remark}



\subsection{Generalized non-convergence theorem}

 Due to its nature of generalization, this section may require certain preliminary knowledge about the proofs of the previous theorems. One may read the next sections before reading this section.
 
 Our proofs of Theorem~\ref{thm3} and Theorem~\ref{rmk_div} gives rise to a general result as follows. Consider the initial value problem in $\bbR^d\times \bbR_{+}$ for a vector-valued quantity $f$ with $n$-many components as

\begin{equation}\label{eq_gen}
\partial_t {f}  + L^{\varepsilon}f + \mathcal{N}(f)=0 \quad \mbox{and}\quad f|_{t=0}=f_0,
\end{equation}
where $\mathcal{N}$ denotes the nonlinearity of the system and $L^{\varepsilon}$ stands for the linear propagation that depends on the parameter $\varepsilon$. We consider the asymptotic regime when
\begin{equation*}
    \varepsilon\to 0.
\end{equation*}
Note that this is our case indeed, because $\mu=\Omega/N$ is fixed so that $N$ and $\Omega$ grow proportionally to each other in Theorem~\ref{thm3} and Theorem~\ref{rmk_div}. 

 Let the linear propagator $L^{\varepsilon}$ be an operator defined by its Fourier symbol $\widehat{L^{\varepsilon}(\xi)}$ that is a complex-valued $n\times n$ matrix. The matrix $\widehat{L^{\varepsilon}(\xi)}$ has the $n$-many eigenvalues denoted by $\lambda_j(\xi)$'s that are well-defined for a.e. $\xi\in \bbC^4$ and the corresponding eigenvectors of the form $b_j(\xi)$'s. Assume that the set of the eigenvectors, $\{b_j(\xi)\}_{j=1}^{n}$, is an orthonormal basis of $\bbC^n$; then the solution $\omega $ to the linearized system
 \begin{equation*}
     \partial_t \omega + L^{\varepsilon} \omega =0 
 \end{equation*}
with the initial data $\omega_0$, can be rewritten in Fourier variables as,
\begin{equation*}
    \widehat{\omega}(t,\xi) = \sum_{j\in \{1\}\cup \mathcal{J}} e^{t \lambda_j^{\varepsilon}(\xi)} \langle \widehat{\omega_0}(\xi),b_j(\xi)\rangle_{\bbC^n} b_j(\xi),
\end{equation*}
where $\mathcal{J}=\{2,3,\cdots, n\}$ is the set of the indices for the eigenpairs except for $(b_1,\lambda_1)$.
Note that the case $j=1$ will represent the limit slow dynamics, so we further assume $\lambda_1(\xi)$ is $\varepsilon$-independent.

\vspace{0.1in}

\noindent \textit{\textbf{Assumptions.}}
\begin{enumerate}
\item \textit{(Local well-posedness)} There is a spatially defined function space $X$ equipped with the norm $\|\cdot \|_{X}$ such that  there exists a unique solution $f\in C([0,T]; X(\bbR^d))$ to \eqref{eq_gen} where $T$ is the $\varepsilon$-independent existence time of $f$.
    \item \textit{(Decomposition)} The unique solution $f$ can be decomposed as
    \begin{equation*}
        f=f_1+\sum_{j\in \mathcal{J}} f_j + \mathcal{E} \end{equation*}
    in the class $C([0,T]; X(\bbR^d))$, where $|\mathcal{J}| \leq {n-1}$, $\mathcal{E}(0)=0$, and $f_1$ is $\varepsilon$-independent.

    \item \textit{(Initial largeness)} There exists a constant $A>0$ such that  $$\left\|\sum_{j\in\mathcal{J}}e^{t\lambda_j^{\varepsilon}}f_{j}(0) \right\|_{X} \geq A$$ for any $\varepsilon>0$ and any $t\in[0,t_0]$ with some sufficiently small $t_0>0$.

    \item \textit{(Convergence)} There exists a $\varepsilon$-independent constant $t_0 \in (0,T_*)$ such that the difference between $f_j$'s and their corresponding linear propagation,  $$\sum_{j\in \mathcal{J}} \left\| f_j(t)-e^{t\lambda_j^{\varepsilon}}f_j(0) \right\|_{X},$$ converges to $0$ as $\varepsilon\to 0.$
     \item \textit{(Uniform smallness)} The error $\mathcal{E}$ satisfies the smallness condition $$\| \mathcal{E}(t) \|_{X} \leq \frac{1}{4}A$$ over the time interval $[0,t_0].$ 
\end{enumerate}

\begin{theorem}\label{thm_gen}
    If the conditions in \textbf{Assumptions} are true, then the solution $f$ to \eqref{eq_gen} satisfies
    \begin{equation*}
        \liminf_{\varepsilon\to 0}\left(\inf_{t\in[0,t_0]}\|f(t)-f_1(t)\|_{X}\right) \geq \frac{1}{2} A,
    \end{equation*}
    where $A>0$ is a constant depending only on $u_0$ and $ t_0\in(0,T_\ast)$ is sufficiently small.
\end{theorem}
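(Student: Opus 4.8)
The plan is to reduce the assertion to a short chain of triangle inequalities powered by the five items in \textbf{Assumptions}; once this is set up, the abstract theorem is pure bookkeeping. The genuine difficulty in any concrete application is \emph{verifying} those five assumptions — this is exactly what the arguments behind Theorem~\ref{thm3} and Theorem~\ref{rmk_div} accomplish — but that work is entirely external to the statement at hand.

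First I would invoke the decomposition in Assumption~(2) to write, for every $t\in[0,t_0]$,
\[
f(t)-f_1(t)=\sum_{j\in\mathcal{J}}f_j(t)+\mathcal{E}(t),
\]
so that $\|f(t)-f_1(t)\|_X\geq\big\|\sum_{j\in\mathcal{J}}f_j(t)\big\|_X-\|\mathcal{E}(t)\|_X$. To bound the first term from below I would split each $f_j(t)$ into its linear propagation $e^{t\lambda_j^\varepsilon}f_j(0)$ plus a remainder, apply the initial largeness of Assumption~(3), namely $\big\|\sum_{j\in\mathcal{J}}e^{t\lambda_j^\varepsilon}f_j(0)\big\|_X\geq A$, and then absorb the remainder using Assumption~(4): since $\sum_{j\in\mathcal{J}}\|f_j(t)-e^{t\lambda_j^\varepsilon}f_j(0)\|_X\to0$ as $\varepsilon\to0$ (uniformly over $t\in[0,t_0]$), there is $\varepsilon_0>0$ such that this sum is at most $\tfrac14 A$ for all $\varepsilon<\varepsilon_0$ and all $t\in[0,t_0]$. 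Combining gives
\[
\Big\|\sum_{j\in\mathcal{J}}f_j(t)\Big\|_X\geq A-\tfrac14 A=\tfrac34 A
\]
uniformly on $[0,t_0]$ for $\varepsilon<\varepsilon_0$.

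Finally I would bring in the uniform smallness of Assumption~(5), $\|\mathcal{E}(t)\|_X\leq\tfrac14 A$ on $[0,t_0]$, to conclude $\|f(t)-f_1(t)\|_X\geq\tfrac34 A-\tfrac14 A=\tfrac12 A$ for every $t\in[0,t_0]$ whenever $\varepsilon<\varepsilon_0$; taking the infimum over $t\in[0,t_0]$ and then $\liminf_{\varepsilon\to0}$ delivers the stated lower bound $\tfrac12 A$. Assumption~(1) enters only to guarantee that $f$ — and hence the entire decomposition — genuinely exists on an $\varepsilon$-independent interval containing $[0,t_0]$, so that all the norms above are finite and $t_0\in(0,T_\ast)$ can be fixed once and for all, independently of $\varepsilon$.

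The single point deserving care — the closest thing to an obstacle here — is the uniformity in $t$ of the convergence in Assumption~(4): to control $\inf_{t\in[0,t_0]}\|f(t)-f_1(t)\|_X$ one really needs $\sup_{t\in[0,t_0]}\sum_{j\in\mathcal{J}}\|f_j(t)-e^{t\lambda_j^\varepsilon}f_j(0)\|_X\to0$ rather than mere pointwise convergence in $t$. If only pointwise convergence were available one would instead fix, for each $\varepsilon$, a time $t_\varepsilon$ nearly attaining the infimum and pass to a convergent subsequence of the $t_\varepsilon$; but in the applications of interest the convergence is in fact established uniformly (indeed in a Strichartz norm over the full interval $[0,T]$), so this subtlety does not bite.
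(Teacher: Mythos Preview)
Your proposal is correct and is exactly what the paper intends: the paper's own ``proof'' is the one-line Remark that ``The proof can be directly obtained from the triangle inequality,'' and you have simply written out that chain of triangle inequalities using Assumptions~(1)--(5). There is nothing missing; the paper regards this theorem as a tautological repackaging of the five assumptions, with all substance deferred to verifying them in concrete applications.
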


\begin{remark}
    The proof can be directly obtained from the triangle inequality.
    \end{remark}
    \begin{remark} We may recall the nature of the proofs of the previous theorems. In proving our previous theorems, introducing the modified linear system \eqref{MLS} was crucial to satisfy the conditions on both \textit{decomposition} and \textit{convergence}. Note also that, during the proof of Theorem~\ref{thm3}, the uniform-in-$\xi$ form of $\lambda_j^\varepsilon = it\frac{1}{\varepsilon}$ was necessary to satisfy the condition of \textit{initial largeness} due to the nature of $X=W^{1,\infty}$. For Theorem~\ref{rmk_div} where $X=H^s$, one can see that as long as the eigenvalue is purely imaginary, the $H^s$ norm wouldn't be affected by $e^{t\lambda_j^\varepsilon}$ even when $\lambda_j^\varepsilon$ depends on $\xi$, and so the initial largeness is trivially obtained. The smallness of the error term $\mathcal{E}$ near the initial time not depending on $\varepsilon$ was shown by the energy-type estimates.
\end{remark}

Whether such sufficient conditions hold or not depends on the structure of the linear propagator $L^{\varepsilon}$ and the nonlinearity $\mathcal{N}(f)$. Recalling that we denote by $P_1$ the projection onto the space spanned by $b_1(\xi)$ in Fourier variables as the slow limit component, we can specify the required structure as follows.

\begin{proposition} 
    Let $X=W^{k,\infty}$ with $1 \leq k \in \bbN$. Assume that $\lambda_1^{\varepsilon}(\xi) = 0$ and $\lambda_j^{\varepsilon}(\xi)$ takes the form  $\lambda_j^{\varepsilon}(\xi) = ih_j(\varepsilon)$, where the function $h_j:\bbR_{+}\to\bbR_{+}$ satisfies $h_j(\varepsilon)\to \infty$ as $\varepsilon\to  0$ for each $j \in \mathcal{J}$. Fix any $s \in \bbN$ with $s > \frac{d}{2} + k$. Assume that the nonlinearity $\mathcal{N}$ in \eqref{eq_gen} takes the bilinear form and satisfies the estimates
    \begin{equation*}
        \left| \langle \partial^\alpha [\mathcal{N}(g,\bar{g})],\partial^\alpha \bar{g} \rangle_{L^2} \right| \leq C \|g\|_{H^{s}} \| \bar{g} \|_{H^{s}}^{2}, \quad \left\| \partial^\alpha [\mathcal{N}(g,\bar{g})] \right\|_{L^2} \leq C \|g\|_{H^{s}} \| \bar{g} \|_{H^{s+\ell}}
    \end{equation*} for any multi-index $\alpha$ with $|\alpha| < s$ and     \begin{equation*}
        \left| \langle \partial^\alpha [\mathcal{N}(g,\bar{g})],\partial^\alpha \bar{g} \rangle_{L^2} \right| \leq C \|g\|_{H^{a}} \| \bar{g} \|_{H^{a}}^{2}, \quad \left\| \partial^\alpha [\mathcal{N}(g,\bar{g})] \right\|_{L^2} \leq C \|g\|_{H^{a}} \| \bar{g} \|_{H^{a+\ell}}
    \end{equation*}
     for any multi-index $\alpha$ with $|\alpha| = a \geq s$, where $\ell$ is some natural number. Regarding the linearity $L^\varepsilon$, we assume that  $L^\varepsilon$ is a Fourier multiplier whose symbol $\mathscr{F}L^{\varepsilon}(\xi)$ is a skew-symmetric matrix. Then, for any initial data $f_0\in H^m$ with $m\in \bbN$ satisfying $m \geq s + 3\ell$, \textbf{\textit{Assumptions}} can be satisfied by introducing the corresponding modified linear system that is analogous to \eqref{MLS}.
\end{proposition}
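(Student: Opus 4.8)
The plan is to realize the abstract \textbf{Assumptions} of Theorem~\ref{thm_gen} through an explicit three–term splitting organized around a modified linear system, just as in the concrete proofs of Theorems~\ref{thm3} and~\ref{rmk_div}. I would write $P_1$ for the spectral projection onto the null mode $\lambda_1^\varepsilon\equiv 0$ and $P_j$, $j\in\mathcal{J}$, for the rest, so that $L^\varepsilon=\sum_{j\in\mathcal{J}}\lambda_j^\varepsilon P_j$ and each $P_j$ is an orthogonal projection, bounded on every $H^s$ with norm $1$. The slow part $f_1$ would solve the limit equation $\partial_t f_1+P_1\mathcal{N}(f_1,f_1)=0$, $f_1(0)=P_1 f_0$; since $f_1=P_1 f_1$ and $\lambda_1^\varepsilon\equiv 0$ one has $L^\varepsilon f_1=0$, so $f_1$ is genuinely $\varepsilon$–independent, and — moving the self–adjoint $P_1$ onto $\partial^\alpha f_1$ and using $P_1 f_1=f_1$ — the hypothesized tame bound at $a=m$ closes the $H^m$ a priori estimate for $f_1$, giving $f_1\in C([0,T];H^m)\cap C^1([0,T];H^{m-\ell})$ on an $\varepsilon$–free interval. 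Then $g$ would solve the modified linear system $\partial_t g+L^\varepsilon g+(I-P_1)\mathcal{N}(f_1,f_1)=0$, $g(0)=(I-P_1)f_0$, which is inhomogeneous linear with an already $\varepsilon$–independent forcing, hence given by Duhamel; because $L^\varepsilon$ and $(I-P_1)\mathcal{N}$ both annihilate the slow mode, $P_1 g\equiv 0$, so $g=\sum_{j\in\mathcal{J}}f_j$ with $f_j:=P_j g$. Setting $\mathcal{E}:=f-f_1-g$ gives $\mathcal{E}(0)=0$ and, subtracting the three equations from that for $f$, $\partial_t\mathcal{E}+L^\varepsilon\mathcal{E}+\mathcal{N}(f)-\mathcal{N}(f_1,f_1)=0$; this is the required \emph{decomposition} with $|\mathcal{J}|=n-1$, and \emph{local well–posedness} of $f$ on an $\varepsilon$–free interval (and the identification of $T$ as the smaller of the two existence times) follows from the skew–symmetric energy estimate $\frac{d}{dt}\|f\|_{H^m}^2\le C\|f\|_{H^m}^3$, the $L^\varepsilon$ term dropping since $\mathscr{F}L^\varepsilon$ is skew–symmetric.

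For \emph{convergence}, Duhamel gives $f_j(t)-e^{t\lambda_j^\varepsilon}f_j(0)=-\int_0^t e^{(t-\tau)\lambda_j^\varepsilon}P_j\mathcal{N}(f_1,f_1)(\tau)\,\ud\tau$, and the point I would use is that $\lambda_j^\varepsilon=ih_j(\varepsilon)$ is a purely imaginary scalar \emph{independent of $\xi$}: then $e^{(t-\tau)\lambda_j^\varepsilon}$ is a unimodular constant, and one integration by parts in $\tau$ — legitimate because $f_1\in C^1$, whence $\tau\mapsto P_j\mathcal{N}(f_1,f_1)(\tau)\in C^1([0,T];H^{m-2\ell})$ via the tame estimates (this is where the margin $m\ge s+3\ell$ is spent) — produces an overall factor $h_j(\varepsilon)^{-1}$. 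So the difference is $O(h_j(\varepsilon)^{-1})$ in $H^{m-2\ell}$, and because $m-2\ell\ge s+\ell>\frac{d}{2}+k$ it is $o(1)$ in $W^{k,\infty}$ as $\varepsilon\to 0$, uniformly on $[0,T]$; summing over the finite set $\mathcal{J}$ closes this condition.

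The hard part, I expect, will be \emph{initial largeness}, because the $L^\infty$–based norm $W^{k,\infty}$ has no Plancherel identity through which the orthogonality of the $P_j$ could be used, and this is precisely where the structural hypotheses ($\lambda_1^\varepsilon\equiv 0$, $\lambda_j^\varepsilon=ih_j(\varepsilon)$ constant in $\xi$, $|\mathcal{J}|<\infty$) have to be consumed. Noting $\sum_{j\in\mathcal{J}}e^{t\lambda_j^\varepsilon}f_j(0)=\sum_{j\in\mathcal{J}}e^{ith_j(\varepsilon)}P_j f_0$ (since $f_j(0)=P_j g(0)=P_j f_0$), I would argue by contradiction: if this were not bounded below in $W^{k,\infty}$ uniformly in $\varepsilon>0$ and $t\ge 0$, there would be sequences $\varepsilon_n,t_n$ along which the norm $\to 0$; since $\mathcal{J}$ is finite and each $e^{it_n h_j(\varepsilon_n)}$ lies on the compact unit circle, pass to a subsequence with $e^{it_n h_j(\varepsilon_n)}\to c_j$, $|c_j|=1$. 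As $P_j f_0\in H^m\hookrightarrow W^{k,\infty}$, the triangle inequality then forces $\sum_{j\in\mathcal{J}}c_j P_j f_0=0$; taking Fourier transforms and using the orthonormality of $\{b_j(\xi)\}$ together with $c_j\ne 0$ gives $\langle\mathscr{F}f_0(\xi),b_j(\xi)\rangle=0$ for every $j\in\mathcal{J}$ and a.e.\ $\xi$, i.e.\ $f_0=P_1 f_0$. Hence, provided $f_0\ne P_1 f_0$ (the hypothesis this clause needs), there is a constant $A=A(f_0)>0$, independent of $t_0$, witnessing initial largeness — compactness of the phase torus standing in for orthogonality.

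Finally, for \emph{uniform smallness} of $\mathcal{E}$ I would run an $H^s$ energy estimate on $\partial_t\mathcal{E}+L^\varepsilon\mathcal{E}=-(\mathcal{N}(f)-\mathcal{N}(f_1,f_1))$: skew–symmetry kills the $L^\varepsilon$ term, and expanding $\mathcal{N}(f)-\mathcal{N}(f_1,f_1)=\mathcal{N}(g+\mathcal{E},f_1+g+\mathcal{E})+\mathcal{N}(f_1,g+\mathcal{E})$ by bilinearity and pairing with $\mathcal{E}$, the summands with $\mathcal{E}$ in the two rightmost slots are $\le C(\|g\|_{H^s}+\|f_1\|_{H^s})\|\mathcal{E}\|_{H^s}^2$ or $C\|\mathcal{E}\|_{H^s}^3$ by the first tame estimate, those with $\mathcal{E}$ only in the first slot are $\le C(\|g\|_{H^{s+\ell}}+\|f_1\|_{H^{s+\ell}})\|\mathcal{E}\|_{H^s}^2$ by Cauchy–Schwarz and the $L^2$–type tame estimate (no derivative loss on $\mathcal{E}$), and the $\mathcal{E}$–linear ``forcing'' terms $\langle\mathcal{N}(g,f_1)+\mathcal{N}(g,g)+\mathcal{N}(f_1,g),\mathcal{E}\rangle_{H^s}$ are $\le C_\ast\|\mathcal{E}\|_{H^s}$; all constants are $\varepsilon$–uniform since skew–symmetry makes the linear solution operator an $H^{s+\ell}$–isometry (so $\|g\|_{C([0,T];H^{s+\ell})}$ is $\varepsilon$–uniform), the Duhamel integrand is $\varepsilon$–free, and $m\ge s+3\ell$ supplies the regularity of $f,f_1,g$ and their $\partial_t$'s. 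This yields $\frac{d}{dt}\|\mathcal{E}\|_{H^s}\le C_1(1+\|\mathcal{E}\|_{H^s})$ with $C_1$ independent of $\varepsilon$, so $\mathcal{E}(0)=0$ gives $\|\mathcal{E}(t)\|_{H^s}\le e^{C_1 t}-1$, and shrinking $t_0$ uniformly in $\varepsilon$ makes $\|\mathcal{E}(t)\|_{W^{k,\infty}}\le C_{\mathrm{Sob}}(e^{C_1 t}-1)\le\frac14 A$ on $[0,t_0]$. All five \textbf{Assumptions} then hold, so Theorem~\ref{thm_gen} applies, which is the claim.
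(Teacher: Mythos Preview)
Your proposal is correct and follows essentially the same scheme the paper intends: the paper gives no detailed proof of this proposition, merely directing the reader to mimic the proofs of Theorem~\ref{thm3} and Theorem~\ref{rmk_div}, and your decomposition $f=f_1+g+\mathcal{E}$, the integration-by-parts-in-$\tau$ argument for \emph{convergence}, and the $H^s$ energy estimate for $\mathcal{E}$ all track those proofs faithfully (with the regularity budget $m\ge s+3\ell$ consumed exactly as in Step~1 there).

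The one place where you genuinely deviate is \emph{initial largeness}. The paper's Step~4 in Theorem~\ref{thm3} obtains the lower bound by passing through $L^2$ on a large ball: $\|{\cdot}\|_{W^{k,\infty}}\ge\|{\cdot}\|_{L^\infty(B_r)}\ge|B_r|^{-1/2}\|{\cdot}\|_{L^2(B_r)}$, then uses spatial decay to reduce to $\|{\cdot}\|_{L^2(\bbR^d)}$, which is handled by Plancherel and the orthogonality of the $P_j f_0$; this yields an explicit $A$ proportional to $\|f_0-P_1 f_0\|_{L^2}$. Your compactness-of-the-phase-torus argument is a legitimate alternative that works for arbitrary finite $\mathcal{J}$ without the ball trick, at the cost of making $A$ non-constructive. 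Both routes exploit the same structural feature---that $\lambda_j^\varepsilon=ih_j(\varepsilon)$ is $\xi$-independent---so either is acceptable here. One minor imprecision: your Gr\"onwall line $\tfrac{d}{dt}\|\mathcal{E}\|_{H^s}\le C_1(1+\|\mathcal{E}\|_{H^s})$ suppresses the quadratic and cubic terms in $\|\mathcal{E}\|_{H^s}$; the paper closes this with the bootstrap hypothesis \eqref{E_ass}, and you should note that such a continuity argument is implicit in your short-time smallness claim.
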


\begin{remark}
    The case $X=H^{s}$ with $\lambda_j^{\varepsilon}(\xi) = ih_j(\varepsilon)p_j(\xi)$ where $\frac{1}{C} \leq |p_j(\xi)| \leq C$ can be treated similarly, in an analogous fashion to the proof of Theorem~\ref{rmk_div}.
\end{remark}

\begin{remark}
    Together with Theorem~\ref{thm_gen}, this proposition generalizes our non-convergence results to the equations of the form \eqref{eq_gen} with the required structure stated above. One can prove this proposition by following the proofs of Theorem~\ref{thm3} and Theorem~\ref{rmk_div}.
\end{remark}

\section{Preliminaries}\label{sec_prelim}
\subsection{Local well-posedness}
The Boussinesq system \eqref{eq_main} is locally well-posed in $H^s$ for $s>\frac{5}{2}$. Suppose that $(v_0,\theta_0)\in H^{s}$. For any given $\Omega>0$ and $N>0$, by the standard theory for local well-posedness, there exists a maximal existence time $T_\ast=T_\ast(s,\|(v_0,\theta_0)\|_{H^{s}})$, which is obtained independently of the geophysical parameters $\Omega$ and $N$ thanks to the \emph{skew-symmetric} structure of the linear part. The system \eqref{eq_main} possesses a unique classical local-in-time solution pair $(v,\theta)$ satisfying $$(v,\theta) \in C([0,T_\ast);H^{s}(\bbR^3))\cap C^1([0,T_\ast);H^{s-1}(\bbR^3)).$$ Moreover, there are positive constants $C_0=C_0(s)$ and $C_1=C_1(s,T)$ such that
 \begin{equation}\label{loc_est}
     T_\ast\geq \frac{C_0}{\|(v_0,\theta_0)\|_{H^{s}}},\quad \sup_{t\in[0,T)}\|(v,\theta)(t)\|_{H^{s}}\leq C_1\|(v_0,\theta_0)\|_{H^{s}},
 \end{equation}
 for any $T\in(0,T_\ast)$.

 \subsection{Linear theory}\label{sub3.1}

    Our technical goal is to exploit the linear structure of \eqref{eq_main}, which comes from the three linear terms $-\Omega e_3\times v,$ $N\theta e_3,$ and $-Nv_3$ that are contingent upon the presence of either rotation or stratification. As the parameters $N$ and $\Omega$ get larger, we expect that the aforementioned linear terms will dominate the dynamics of \eqref{eq_main}; the solution to \eqref{eq_main} would either converges to or stays away from the solution of the corresponding limit system, whose structure is determined by the eigenvector of the linear propagator. 
    
    To witness such convergence or non-convergence more tangibly, we investigate the linear structure of the above system by consolidating the separate linear terms into one linear operator. We first incorporate $v$ and $\theta$ into one vector field $u$, say $
u=(v,\theta)$ and then apply the 3D Leray-Helmholtz projector to the velocity equations in \eqref{eq_main} to remove the pressure gradient. This allows us to rewrite the system \eqref{eq_main} as the  following amalgamated version
\begin{equation}\label{UEQ}
\left\{
\begin{alignedat}{3}
u_t + N\,\widetilde{\mathbb{P}} J \widetilde{\mathbb{P}} \,u + \widetilde{\mathbb{P}}\,(u \cdot \widetilde{\nabla})u &= 0, \\ \widetilde{\nabla} \cdot u& = 0, \\  
\end{alignedat}
\right.
\end{equation}
where we have used the following definitions
\begin{gather}\label{proj_notation}
\widetilde{\mathbb{P}} := \left(
\begin{array}{c|c}
\mathbb{P} & 0\\
\hline
0 & 1
\end{array}
\right), \qquad J := \begin{pmatrix}
0 & -\mu & 0 & 0 \\
\mu & 0 & 0 & 0 \\
0 & 0 & 0 & -1 \\
0 & 0 & 1 & 0
\end{pmatrix}, \qquad \widetilde{\nabla} := \begin{pmatrix} \nabla \\ 0 \end{pmatrix}.
\end{gather}
In the above notations, $\mathbb{P}$ is the usual Leray-Helmholtz projector. One may use the simple facts $\mathbb{P}\,\nabla p=0$ and $\widetilde{\mathbb{P}} \, u=u$ to derive the system \eqref{UEQ} from \eqref{eq_main}.

\subsubsection{The Linearized Equations}\label{subsec_LE}
Here, we obtain the explicit solution formula regarding the linearized equations for \eqref{UEQ}, in view of the eigenvectors of the linear operator. Recalling the definitions in \eqref{proj_notation}, let us start with the linearized system
\begin{equation}\label{LEQ}
   \partial_t w + N\widetilde{\mathbb{P}} J \widetilde{\mathbb{P}} w = 0, \qquad \widetilde{\nabla}\cdot w = 0,
\end{equation}
supplemented with the divergence-free initial data $w(0,x)=w_0(x)$. Taking the Fourier transform $\mathscr{F}$, we rewrite \eqref{LEQ} as

\begin{equation}\label{FLEQ}
\partial_t \mathscr{F} w  + N\widetilde{P}(\xi) J \widetilde{P}(\xi) \mathscr{F} w = 0, \qquad  \begin{pmatrix}
\xi \\
0
\end{pmatrix} \cdot \mathscr{F} w = 0,
\end{equation}
equipped with the corresponding initial condition $\mathscr{F} w(0,\xi)=\mathscr{F} w_0(\xi)$ satisfying $\begin{pmatrix}
\xi \\
0
\end{pmatrix} \cdot \mathscr{F} w_0 = 0$ due to the divergence-free condition. The matrix $\widetilde{P}(\xi)$ is the Fourier symbol of the projection $\widetilde{\mathbb{P}}$, which can be expressed as
$$
\widetilde{P}(\xi) = \left(
   \begin{array}{c|c}
     P(\xi) & 0\\
     \hline
     0 & 1
   \end{array}
   \right)
= \frac 1{|\xi|^2}
\begin{pmatrix}
\xi_2^2 + \xi_3^2 & - \xi_1 \xi_2 & - \xi_1 \xi_3 & 0 \\
- \xi_2 \xi_1 & \xi_1^2 + \xi_3^2 & - \xi_2 \xi_3 & 0 \\
- \xi_3 \xi_1 & - \xi_3 \xi_2 & \xi_1^2 + \xi_2^2 & 0 \\
0 & 0 & 0 & |\xi|^2
\end{pmatrix}.
$$
Throughout the paper, we will frequently use the following notations:
\begin{equation}\label{vectors}
\xi_H = (\xi_1,\xi_2,0,0)^{\top} \quad \mbox{and}\quad \xi_{\mu}=(-\xi_2,\xi_1,0,\mu\xi_3)^{\top}.
\end{equation}
The vector $\xi_H$ represents horizontal part of $(\xi,0)\in\bbC^4$ and $\xi_{\mu}$ stands for certain linear transformation of $(\xi,0)$ in terms of $\mu=\frac{\Omega}{N}$.
Setting up the linear operator $L$ by $L(\xi):=-NP(\xi)JP(\xi),$
a direct computation gives the representation for $L$ as
$$
L(\xi) =
\frac 1{|\xi|^2}
\begin{pmatrix}
0 & \Omega\xi_3^2 & -\Omega\xi_2\xi_3 & -N\xi_1\xi_3 \\
-\Omega\xi_3^2 & 0 & \Omega\xi_1\xi_3 & -N\xi_2\xi_3 \\
\Omega\xi_2\xi_3 & -\Omega\xi_1\xi_3 & 0 & N|\xi_H|^2 \\
N\xi_1\xi_3 & N\xi_2\xi_3 & -N|\xi_H|^2 & 0
\end{pmatrix}.
$$

\subsubsection{Eigenvalues and Eigenvectors of the Linear Operator $L(\xi)$}\label{eigen}
It has been proved in \cite{IMT} that the matrix $L(\xi)$ has four eigenvalues with their explicit forms $\{\lambda_\gamma\}_{\gamma=\mathcal{N},\mu,\pm}=\{0,0,iN\frac{|\xi_{\mu}|}{|\xi|},-iN\frac{|\xi_{\mu}|}{|\xi|}\}$ and the corresponding eigenvectors $\{b_\gamma(\xi)\}_{\gamma=\mathcal{N},\mu,\pm}$ satisfying
\begin{equation}\label{def_b}
   \begin{split}
      b_\mathcal{N}(\xi)&=\frac{1}{|\xi|}\begin{pmatrix}
       \xi_1 \\ \xi_2 \\ \xi_3 \\ 0
       \end{pmatrix},
       \qquad
       b_\mu(\xi)=\frac{\xi_\mu}{|\xi_\mu|}, \qquad
       b_{\pm}(\xi)=\frac{1}{\sqrt{2}|\xi_H||\xi||\xi_{\mu}|}
       \begin{pmatrix}
       \mu\xi_2\xi_3|\xi|\pm i\xi_1\xi_3|\xi_{\mu}| \\
       -\mu\xi_1\xi_3|\xi|\pm i\xi_2\xi_3|\xi_{\mu}| \\
       \mp i|\xi_H|^2|\xi_{\mu}| \\
       |\xi_H|^2|\xi|
       \end{pmatrix}.
   \end{split}
\end{equation}
Indeed, $\{b_\gamma(\xi)\}_{\gamma=\mathcal{N},\mu,\pm}$ is an orthonormal basis of $\mathbb{C}^4$ which satisfies
\begin{equation*}
\begin{split}
   L(\xi)b_{\mu}(\xi)=L(\xi)b_{\mathcal{N}}(\xi)=0, \quad L(\xi)b_{+}(\xi)=iN\frac{|\xi_{\mu}|}{|\xi|}b_{+}(\xi),\quad L(\xi)b_{-}(\xi)=-iN\frac{|\xi_{\mu}|}{|\xi|}b_{-}(\xi).
\end{split}
\end{equation*}
Since we have $\langle\mathscr{F} w_0(\xi),b_\mathcal{N}(\xi)\rangle_{\mathbb{C}^4}=0$ thanks to the divergence-free condition $\begin{pmatrix}
\xi \\
0
\end{pmatrix} \cdot \mathscr{F} w = 0$,
we can rewrite the solution to \eqref{FLEQ} as the Duhamel form
\begin{equation*}
   \mathscr{F} w(t,\xi)=\sum_{\gamma=\pm,\mu}e^{t\lambda_\gamma(\xi)}\langle\mathscr{F} w_0(\xi),b_{\gamma}(\xi)\rangle_{\mathbb{C}^4}b_{\gamma}(\xi).\\
\end{equation*}
The above derivation is rigorously restated in the following proposition.
\begin{proposition}
For every $N\geq 0$ and for every $w_0\in L^2(\mathbb{R}^3)$ with $\widetilde{\nabla}\cdot w_0 =0,$ there exists a unique solution $w$ to \eqref{LEQ} which is given by
\begin{equation*}
   w(t,x)=P_\mu w_0(x) + \sum_{\gamma=\pm}e^{\gamma it N p_{\mu}(D) }P_{\gamma} w_0(x)
\end{equation*}
where we set
\begin{equation}\label{def_proj}
   \begin{split}
      P_{\gamma} w_0&:=\mathscr{F}^{-1}[\langle\mathscr{F} w_0(\xi),b_{\gamma}(\xi)\rangle_{\mathbb{C}^4}b_{\gamma}(\xi)], \qquad \gamma=\pm,\mu, \\
       e^{\gamma it N p_{\mu}(D)}\varphi(x)&:=\frac{1}{(2\pi)^3}\int_{\mathbb{R}^3}e^{ix\cdot\xi+ \gamma it N p_{\mu}(\xi)}\mathscr{F}\varphi(\xi)\,\ud\xi, \qquad \gamma=\pm,
   \end{split}
\end{equation}
and
\begin{equation}\label{p_mu}
   p_{\mu}(\xi):=\frac{|\xi_{\mu}|}{|\xi|}.
\end{equation}
\end{proposition}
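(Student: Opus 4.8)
The plan is to pass to the Fourier side, diagonalize the symbol $L(\xi)$ of the operator $-N\widetilde{\mathbb{P}}J\widetilde{\mathbb{P}}$ against the orthonormal eigenbasis $\{b_\gamma(\xi)\}_{\gamma=\mathcal{N},\mu,\pm}$ recalled in \eqref{def_b}, solve the resulting family of constant-coefficient $4\times4$ ODEs, and invert. First I would apply $\mathscr{F}$ to \eqref{LEQ} to obtain, for a.e.\ fixed $\xi$, the linear ODE $\partial_t\mathscr{F}w(t,\xi)=L(\xi)\mathscr{F}w(t,\xi)$ together with the constraint $(\xi,0)^\top\cdot\mathscr{F}w(t,\xi)=0$, exactly as in \eqref{FLEQ}. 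Expanding the datum in the eigenbasis as $\mathscr{F}w_0(\xi)=\sum_\gamma\langle\mathscr{F}w_0(\xi),b_\gamma(\xi)\rangle_{\mathbb{C}^4}b_\gamma(\xi)$ and using $L(\xi)b_\gamma(\xi)=\lambda_\gamma(\xi)b_\gamma(\xi)$, the unique solution of the ODE is $\mathscr{F}w(t,\xi)=\sum_\gamma e^{t\lambda_\gamma(\xi)}\langle\mathscr{F}w_0(\xi),b_\gamma(\xi)\rangle_{\mathbb{C}^4}b_\gamma(\xi)$. The key simplification is that the $b_{\mathcal N}$-component drops out: since $b_{\mathcal N}(\xi)=|\xi|^{-1}(\xi_1,\xi_2,\xi_3,0)^\top$ is a real scalar multiple of $(\xi,0)^\top$ and $w_0$ is divergence-free, $\langle\mathscr{F}w_0(\xi),b_{\mathcal N}(\xi)\rangle_{\mathbb{C}^4}=|\xi|^{-1}(\xi,0)^\top\cdot\mathscr{F}w_0(\xi)=0$; moreover $\lambda_{\mathcal N}=\lambda_\mu=0$ while $\lambda_\pm=\pm iN|\xi_\mu|/|\xi|=\pm iNp_\mu(\xi)$, so $\mathscr{F}w(t,\xi)=\langle\mathscr{F}w_0,b_\mu\rangle b_\mu+\sum_{\gamma=\pm}e^{\gamma itNp_\mu(\xi)}\langle\mathscr{F}w_0,b_\gamma\rangle b_\gamma$. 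Applying $\mathscr{F}^{-1}$ and recognizing the operators defined in \eqref{def_proj}--\eqref{p_mu} yields the stated representation for $w$.

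Next I would verify that this $w$ genuinely belongs to $C([0,\infty);L^2(\bbR^3))$ and solves \eqref{LEQ}. Because $\{b_\gamma(\xi)\}$ is orthonormal for a.e.\ $\xi$ and $p_\mu(\xi)$ is real-valued, each projector $P_\gamma$ and each Fourier multiplier $e^{\gamma itNp_\mu(D)}$ has a symbol of operator norm one, so $w(t,\cdot)\in L^2$ with $\|w(t)\|_{L^2}=\|w_0\|_{L^2}$ by Plancherel, and continuity in $t$ follows from dominated convergence applied to the integrand $\mathscr{F}w(t,\xi)$. Divergence-freeness is automatic: orthonormality gives $\langle b_\gamma(\xi),b_{\mathcal N}(\xi)\rangle_{\mathbb{C}^4}=0$, i.e.\ $(\xi,0)^\top\cdot b_\gamma(\xi)=0$ for $\gamma=\mu,\pm$, hence $(\xi,0)^\top\cdot\mathscr{F}w(t,\xi)=0$ for all $t$. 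Differentiating $\mathscr{F}w(t,\xi)$ in $t$ fiberwise and using the eigenrelation returns $\partial_t\mathscr{F}w=L(\xi)\mathscr{F}w$, which is \eqref{FLEQ}; thus $w$ solves \eqref{LEQ} with $w(0)=w_0$.

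For uniqueness, let $w$ denote the difference of two $L^2$ solutions sharing the same datum. Using $\widetilde{\mathbb{P}}w=w$ (valid since $w$ is divergence-free), the self-adjointness of $\widetilde{\mathbb{P}}$, and the skew-symmetry of $J$, one computes $\tfrac{d}{dt}\|w(t)\|_{L^2}^2=-2N\langle J\widetilde{\mathbb{P}}w,\widetilde{\mathbb{P}}w\rangle_{L^2}=0$, so $w\equiv 0$; equivalently, uniqueness is inherited fiberwise from the linear ODE in Fourier space. I do not expect a serious obstacle here: the only points that require genuine care are the bookkeeping of the eigen-decomposition \eqref{def_b} quoted from \cite{IMT} — in particular checking its orthonormality and that the $b_{\mathcal N}$ term vanishes exactly because of the divergence-free constraint — together with the routine justifications of differentiating under the integral sign and of interchanging $\mathscr{F}$ with the Fourier multipliers at the $L^2$ regularity level.
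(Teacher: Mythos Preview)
Your proposal is correct and follows exactly the approach the paper takes: the paper's ``proof'' is simply the short derivation immediately preceding the proposition, which passes to Fourier variables, expands in the eigenbasis $\{b_\gamma(\xi)\}$, and uses the divergence-free condition to eliminate the $b_{\mathcal N}$-component. You have supplied more detail (the $L^2$ membership, continuity in time, preservation of the divergence constraint, and uniqueness via skew-symmetry) than the paper bothers to record, but the underlying argument is identical.
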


\section{Proof of Theorem~\ref{thm3}}
For any positive constants $\Omega$ and $N$, by the standard local well-posedness, there exists a time $T \in [0,T_{\ast})$ such that \eqref{eq_main} possess a unique local-in-time solution $u=(v,\theta)$ which satisfies $$u \in C([0,T];H^m(\bbR^3))\cap C^1([0,T];H^{m-1}(\bbR^3)).$$
\begin{proof}[Proof of Theorem~\ref{thm3}]
We assume the hypotheses of Theorem~\ref{thm3}. Fix $s \in \bbN$ with $s \in [3,m-3]$. We have by the continuous embedding $ H^{s}(\bbR^3) \hookrightarrow W^{s-2,\infty}(\bbR^3)$ that
\begin{equation}\label{eng_Ws}
\begin{split}
    \| u(t) - u^{\mu}(t) \|_{W^{s-2,\infty}} &\geq \big\| e^{i N t} P_{+} u_0 + e^{- i N t} P_{-} u_0 \big\|_{W^{s-2,\infty}} - C\sum_{\gamma = \pm} \big\| u^\gamma - e^{\gamma i N t} P_{\gamma} u_0 \big\|_{H^s} - \| \mathcal{E}(t) \|_{H^{s}} \\
    &=: I-II-III
    \end{split}
\end{equation}
for $t\in[0,T],$ where $u^+$ and $u^-$ are defined in \eqref{duhamel_dispersive} as the solutions to the modified linear system \eqref{MLS} and the error $\mathcal{E}$ is defined as $\mathcal{E}=u-u^{+}-u^{-}-u^\mu$. Roughly speaking, our job will be done once we can bound $II$ and $III$ to be as small as we want while $I$ remains big enough. The first step is to control $II$ and the second step is to estimate $III.$ The last step is to prove certain largeness of $I$, which would iron out the whole proof.

\noindent \textbf{Step 1. Decay-in-$N$ of the inhomogeneous parts of $u^{\pm}$}

\noindent Relying on the Duhamel formula
\begin{equation*}
	u^{\pm}(t)=e^{\pm iNt}P_{\pm}u_0 - \int_{0}^{t}e^{\pm i N(t-\tau)}P_{\pm}(u^{\mu}(\tau)\cdot\widetilde{\nabla})u^{\mu}(\tau)\,\mathrm{d}\tau,
\end{equation*}
we claim that the inhomogeneous part can be estimated as
\begin{equation}\label{Ws_upm_est}
\begin{split}
	\sup_{t \in [0,T]} \big\| u^\pm - e^{\pm i N t} P_{\pm} u_0 \big\|_{H^{s+1}} &= \left\| \int_{0}^{t}e^{\pm i N(t-\tau)}P_{\pm}(u^{\mu}(\tau)\cdot\widetilde{\nabla})u^{\mu}(\tau)\,\mathrm{d}\tau \right\|_{H^{s+1}} \\
	&\leq \frac CN (C_L^2 + T C_L^3),
	\end{split}
\end{equation}
where $C_L$ is the constant defined by \eqref{upp_u}. To bring the decay in $N$ out of the time integral, we perform the integration by parts over time as
\begin{gather*}
    \int_{0}^{t}e^{\pm i N(t-\tau)}P_{\pm}(u^{\mu}(\tau)\cdot\widetilde{\nabla})u^{\mu}(\tau)\,\mathrm{d}\tau \\
	= \frac{1}{\pm iN} \int_{0}^{t}e^{\pm i N(t-\tau)}P_{\pm}\big[(\partial_t u^{\mu}(\tau)\cdot\widetilde{\nabla})u^{\mu}(\tau)+ u^{\mu}(\tau)\cdot\widetilde{\nabla})\partial_t u^{\mu}(\tau)\big]\,\mathrm{d}\tau \\
    -\frac{1}{\pm iN} \bigg[e^{\pm i N(t-\tau)}P_{\pm}(u^{\mu}(\tau)\cdot\widetilde{\nabla})u^{\mu}(\tau)\bigg]_{\tau=0}^{t}.
\end{gather*}
Since $H^{s+1}$ is a Banach algebra, the boundary term can be estimated as
\begin{align*}
	\left\| -\frac{1}{\pm iN} \bigg[e^{\pm i N(t-\tau)}P_{\pm}(u^{\mu}(\tau)\cdot\widetilde{\nabla})u^{\mu}(\tau)\bigg]_{\tau=0}^{t} \right\|_{H^{s+1}} &\leq \frac{2}{N} \sup_{t \in [0,T]} \big\|(u^{\mu}(t)\cdot\widetilde{\nabla})u^{\mu}(t)\big\|_{H^{s+1}} \\
	&\leq \frac CN \sup_{t \in [0,T]} \| u^{\mu} \|_{H^{s+2}}^2.
\end{align*}
To control the main time integral, we observe that the integrand consists of the time derivatives of the limit solution $u^\mu$; we need to exploit the limit equations
\begin{equation*}
	\partial_t u^{\mu}+P_{\mu} (u^{\mu} \cdot \widetilde{\nabla}) u^{\mu} = 0
\end{equation*}
to replace the time derivatives by the spatial derivatives. Then we can simply obtain that
\begin{gather*}
	\left\| \frac{1}{\pm iN} \int_{0}^{t}e^{\pm i N(t-\tau)}P_{\pm}\big[(\partial_t u^{\mu}(\tau)\cdot\widetilde{\nabla})u^{\mu}(\tau)+ u^{\mu}(\tau)\cdot\widetilde{\nabla})\partial_t u^{\mu}(\tau)\big]\,\mathrm{d}\tau \right\|_{H^{s+1}} \\
	\leq \frac CN T \sup_{t \in [0,T]} \| u^{\mu} \|_{H^{s+3}}^3
\end{gather*}
where we use the $H^{s+1}$ algebra and the boundedness of $P_\mu$ on $H^{s+1}$. As a consequence, by collecting the above estimates, we arrive at
\begin{equation*}
\begin{split}
	\left\| \int_{0}^{t}e^{\pm i N(t-\tau)}P_{\pm}(u^{\mu}(\tau)\cdot\widetilde{\nabla})u^{\mu}(\tau)\,\mathrm{d}\tau \right\|_{H^{s+1}} &\leq \frac CN \sup_{t \in [0,T]} \| u^{\mu} \|_{H^{s+2}}^2 + \frac CN T \sup_{t \in [0,T]} \| u^{\mu} \|_{H^{s+3}}^3\\
	&\leq \frac{C}{N}(C_L^2+TC_L^3)
	\end{split}
\end{equation*}
thanks to the uniform estimate \eqref{upp_u}. Thus  \eqref{Ws_upm_est} is established as well.

\noindent \textbf{Step 2. Energy estimates for the difference $\mathcal{E}=u-u^{+}-u^{-}-u^\mu$}

\noindent From \eqref{UEQ}, we may compute as
\begin{gather*}
	\frac 12 \frac{\ud}{\ud t} \| \mathcal{E} \|_{H^{s}}^2 + \langle (u \cdot \widetilde{\nabla}) \mathcal{E}, \mathcal{E} \rangle_{H^{s}} + \sum_{j=\mu,\pm} \langle (\mathcal{E} \cdot \widetilde{\nabla}) u^j, \mathcal{E} \rangle_{H^{s}}  + \sum_{\substack{j,k = \mu,\pm \\ (j,k) \neq (\mu,\mu)}} \langle (u^j \cdot \widetilde{\nabla}) u^k, \mathcal{E} \rangle_{H^{s}} = 0.
\end{gather*}
The cancellation property
\begin{gather*}
	\langle (u \cdot \widetilde{\nabla}) \partial^{\alpha}\mathcal{E}, \partial^{\alpha}\mathcal{E} \rangle_{L^2} = 0
\end{gather*}
holds for any multi-index $\alpha$, so we can see that
\begin{align*}
	\big| \langle (u \cdot \widetilde{\nabla}) \mathcal{E}, \mathcal{E} \rangle_{H^{s}} \big| &= \big| \langle (u \cdot \widetilde{\nabla}) \mathcal{E}, \mathcal{E} \rangle_{H^{s}} - \sum_{|\alpha| \leq s} \langle (u \cdot \widetilde{\nabla}) \partial^{\alpha}\mathcal{E}, \partial^{\alpha}\mathcal{E} \rangle_{L^2} \big| \\
	&\leq \sum_{|\alpha|=1}\big| \langle (\partial^{\alpha} u \cdot \widetilde{\nabla}) \mathcal{E}, \partial^{\alpha} \mathcal{E} \rangle_{H^{s-1}} \big| \\
	&\leq C \sum_{j=\mu,\pm} \| u^j \|_{H^{s}} \| \mathcal{E} \|_{H^{s}}^2 + C \| \mathcal{E} \|_{H^{s}}^3.
\end{align*}
In the above computation, the last inequality was derived from that $H^{s-1}(\mathbb{R}^3)$ is a Banach algebra. Similarly, we have
\begin{gather*}
	\sum_{j=\mu,\pm} \big| \langle (\mathcal{E} \cdot \widetilde{\nabla}) u^j, \mathcal{E} \rangle_{H^{s}} \big| \leq C \sum_{j=\mu,\pm} \| u^j \|_{H^{s+1}} \| \mathcal{E} \|_{H^{s}}^2
\end{gather*}
and
\begin{gather*}
	\sum_{\substack{j,k = \mu,\pm \\ (j,k) \neq (\mu,\mu)}} \big| \langle (u^j \cdot \widetilde{\nabla}) u^k, \mathcal{E} \rangle_{H^{s}} \big| \leq C \| \mathcal{E} \|_{H^{s}} \left( \sum_{j=\mu,\pm} \| u^j \|_{H^{s+1}} \right) \left( \sum_{j=\pm} \| u^{j} \|_{H^{s+1}} \right).
\end{gather*}
Combining the above estimates gives
\begin{equation}\label{Hs_E_est}
	\frac{\ud}{\ud t} \| \mathcal{E} \|_{H^{s}} \leq C \left( \| \mathcal{E} \|_{H^{s}} + \sum_{\gamma=\mu,\pm} \| u^\gamma \|_{H^{s+1}} \right) \| \mathcal{E} \|_{H^{s}} + C \left( \sum_{\gamma=\mu,\pm} \| u^\gamma \|_{H^{s+1}} \right) \left( \sum_{\gamma=\pm} \| u^{\gamma} \|_{H^{s+1}} \right),
\end{equation}
which holds for all $t \in (0,T]$.

\noindent \textbf{Step 3. Local-in-time smallness of the difference $\mathcal{E}$}

\noindent In this step, we show that there exists $t_0 \in [0,T]$ such that
\begin{equation}\label{eps_est}
    \sup_{t \in [0,t_0]} \| \mathcal{E} \|_{H^{s}}  \leq \varepsilon \left( \sum_{\gamma = \pm}\| P_{\gamma} u_0 \|_{H^{s+1}} + \frac CN (C_L^2 + TC_L^3) \right)
\end{equation}
for any given $\varepsilon > 0$. Since $\mathcal{E}(0)=0$, we can find a sufficiently small time $t_\ast \in (0,T]$ that satisfies
\begin{equation}\label{E_ass}
	\sup_{t \in [0,t_\ast]} \| \mathcal{E} \|_{H^{s}} \leq \sup_{t \in [0,t_\ast]} \sum_{\gamma=\mu,\pm} \| u^\gamma \|_{H^{s+1}}.
\end{equation}
Then we apply \eqref{Ws_upm_est} to obtain
\begin{equation}\label{triangle}
\begin{split}
    \| u^{\pm} \|_{H^{s+1}} \leq \big\| P_{\pm} u_0 \big\|_{H^{s+1}} + \big\| u^\pm - e^{\pm i N t} P_{\pm} u_0 \big\|_{H^{s+1}} \leq \big\| P_{\pm} u_0 \big\|_{H^{s+1}} + \frac{C}{N}(C_L^2+TC_L^3).
    \end{split}
\end{equation}
Combining the above with \eqref{Hs_E_est}, we see that $\frac{\ud}{\ud t} \| \mathcal{E} \|_{H^{s}}$ is bounded above by a constant $C$ times
\begin{gather*}
	 \left( \sup_{t \in [0,t_\ast]} \sum_{\gamma=\mu,\pm} \| u^\gamma \|_{H^{s+1}} \right) \| \mathcal{E} \|_{H^{s}}
	+ \sup_{t \in [0,t_\ast]} \sum_{\gamma=\mu,\pm} \| u^\gamma \|_{H^{s+1}} \left( \sum_{\gamma = \pm} \| P_{\gamma} u_0 \|_{H^{s+1}} + \frac CN (C_L^2 + TC_L^3) \right).
\end{gather*}
The Gr\"{o}nwall inequality gives us
\begin{equation}\label{E}
\begin{split}
	\sup_{t \in [0,t_\ast]} \| \mathcal{E} \|_{H^{s}}
	\leq Ct_\ast &\sup_{t \in [0,t_\ast]} \sum_{\gamma=\mu,\pm} \| u^\gamma \|_{H^{s+1}} \\
 \times \left( \sum_{\gamma=\pm} \| P_{\gamma} u_0 \|_{H^{s+1}} + \frac CN (C_L^2 +  TC_L^3) \right) &\exp \left(Ct_\ast \sup_{t \in [0,t_\ast]} \sum_{\gamma=\mu,\pm} \| u^\gamma \|_{H^{s+1}} \right).
 \end{split}
\end{equation}
Due to \eqref{upp_u}, \eqref{Ws_upm_est}, and \eqref{triangle}, it is immediately obtained that
\begin{gather*}
	\sup_{t \in [0,t_\ast]} \sum_{\gamma=\mu,\pm} \| u^\gamma \|_{H^{s+1}} \leq C_L + \frac CN(C_L^2 + TC_L^3) + \sum_{\gamma=\pm}\| P_{\gamma} u_0 \|_{H^{s+1}},
\end{gather*}
and thus, for any given $\varepsilon \in (0,1)$, we may choose a sufficiently small $t_0=t_0(\varepsilon) \in (0,t_\ast]$ such that
\begin{equation*}
	Ct_0 \sup_{t \in [0,t_0]} \sum_{\gamma=\mu,\pm} \| u^  \gamma \|_{H^{s+1}} \exp \left( Ct_0 \sup_{t \in [0,t_0]} \sum_{\gamma=\mu,\pm} \| u^ \gamma \|_{H^{s+1}} \right)\leq \varepsilon.
\end{equation*}
Then \eqref{eps_est} follows from \eqref{E}.

\noindent \textbf{Step 4. Preservation of largeness from orthogonality}

We first show a uniform-in-$N$ lower bound of the linear propagation from initial data. More precisely, we prove that there exists a constant $A>0$ such that
\begin{equation}\label{A_est}
	\Big\| \sum_{\gamma=\pm}e^{i\gamma N t} P_{\gamma} u_0 \Big\|_{W^{s-2,\infty}} \geq A
\end{equation}
for all $N$ and $t \in [0,T]$. Let $r>0$. Then, we can see
\begin{equation*}
    \Big\| \sum_{\gamma=\pm}e^{i\gamma N t} P_{\gamma} u_0 \Big\|_{W^{s-2,\infty}} \geq \Big\| \sum_{\gamma=\pm}e^{i\gamma N t} P_{\gamma} u_0 \Big\|_{L^{\infty}(B(0;r))} \geq |B(0;r)|^{-\frac{1}{2}} \Big\| \sum_{\gamma=\pm}e^{i\gamma N t} P_{\gamma} u_0 \Big\|_{L^2(B(0;r))},
\end{equation*}
where $|B(0;r)|$ denotes the volume of the ball centered at the origin with its radius $r$. Since the quantity
\begin{equation*}
    \left| \sum_{\gamma=\pm}e^{i\gamma N t} P_{\gamma} u_0 \right|^2 = |P_{+} u_0|^2 + |P_{-} u_0|^2 + 2\operatorname{Re}[e^{2iNt}\langle P_{+} u_0, P_{-} u_0 \rangle]
\end{equation*}
enjoys certain decay at spatial infinity due to the initial integrability condition $u_0\in H^m(\bbR^3)$, we can find a constant $\varepsilon = \varepsilon(r)>0$ not depending on $N$ and $t$ such that $\varepsilon(r) \to 0$ as $r \to \infty$ and 
\begin{equation*}
    \left\| \sum_{\gamma=\pm}e^{i\gamma N t} P_{\gamma} u_0 \right\|_{L^2(B(0;r))} \geq \left\| \sum_{\gamma=\pm}e^{i\gamma N t} P_{\gamma} u_0 \right\|_{L^2(\bbR^3)} - \varepsilon.
\end{equation*}
Observing that $\mathscr{F}P_{+} u_0(\xi)$ and $\mathscr{F}P_{-} u_0(\xi)$ are orthogonal in $\bbC^4$ for every $\xi\in\bbR^3$, we have by the Plancherel theorem $\big\| \sum_{\gamma=\pm}e^{i\gamma N t} P_{\gamma} u_0 \big\|_{L^2} > 0$ for all $t\in \mathbb{R}$, except for the case $P_{+} u_0 = P_{-} u_0 = 0$. Taking $r>0$ sufficiently large, we can have $\big\| \sum_{\gamma=\pm}e^{i\gamma N t} P_{\gamma} u_0 \big\|_{L^2} - \varepsilon \geq \frac{1}{2} \big\| \sum_{\gamma=\pm}e^{i\gamma N t} P_{\gamma} u_0 \big\|_{L^2}$. Thus, we obtain \eqref{A_est} with 
\begin{equation*}
    A := \frac{1}{2} |B(0;r)|^{-\frac{1}{2}} \Big\| \sum_{\gamma=\pm}e^{i\gamma N t} P_{\gamma} u_0 \Big\|_{L^2}.
\end{equation*}
Now we are ready to finish the proof. From the above ingredients \eqref{eng_Ws}, \eqref{Ws_upm_est}, \eqref{eps_est}, and \eqref{A_est}, we may deduce that
\begin{align*}
	&\inf_{t \in [0,t_0]} \| u(t) - u^{\mu}(t) \|_{W^{s-2,\infty}} \\
	&\hphantom{\qquad\qquad}\geq \inf_{t \in [0,t_0]} \left\| \sum_{\gamma=\pm} e^{\gamma i Nt} P_{\gamma} u_0 \right\|_{W^{s-2,\infty}} - \sum_{\gamma=\pm} \sup_{t \in [0,t_0]} \| u^{\gamma}(t) - e^{\gamma i Nt} P_{\gamma} u_0 \|_{H^s} - \sup_{t \in [0,t_0]} \| \mathcal{E} \|_{H^{s}} \\
	&\hphantom{\qquad\qquad}\geq A - \frac CN (C_L^2 + TC_L^3) - \varepsilon \left(  \sum_{\gamma=\pm} \| P_{\gamma} u_0 \|_{H^{s+1}} + \frac CN (C_L^2 + TC_L^3) \right) \\
	&\hphantom{\qquad\qquad}= A - \varepsilon \sum_{\gamma=\pm} \| P_{\gamma} u_0 \|_{H^{s+1}} - \frac CN (C_L^2 + TC_L^3).
\end{align*}
Here, we fix $\varepsilon$ to satisfy $\varepsilon \sum_{\gamma=\pm} \| P_{\gamma} u_0 \|_{H^{s+1}} \leq \frac 14 A$. Then, taking $R>0$ with $ \frac C{R} (C_L^2 + TC_L^3)  \leq  \frac 14 A$, we obtain
\begin{align*}
	\inf_{t \in [0,t_0]} \| u(t) - u^{\mu}(t) \|_{W^{s-2,\infty}} \geq \frac 12 A
\end{align*}
for all $N > R$. This completes the proof. 
\end{proof}

\section{Proofs of Theorem~\ref{dich} and Corollary~\ref{cor_div}}
\begin{proof}[Proof of Theorem~\ref{dich}]By the triangle inequality, the convergence rate \eqref{conv_1}, the equivalence \eqref{u_phi}, and Proposition~\ref{conv_prop}, we get
\begin{equation}
\begin{split}
    \|u^{(k)}-\nabla_1 \psi^{1}\|_{L^ q(0,T;W^{1,\infty})} &\leq \|u^{(k)}-\nabla_{\mu_k}\psi^{\mu_k}\|_{L^ q(0,T;W^{1,\infty})} + \|\nabla_{\mu_k}\psi^{\mu_k} - \nabla_1 \psi^{1}\|_{L^ q(0,T;W^{1,\infty})} \\
    &\leq \frac{C_{\mu_k}}{\sqrt{\Omega_k^2+N_k^2}} + \|\nabla_{\mu_k}\psi^{\mu_k} - \nabla_1 \psi^{1}\|_{L^ q(0,T;H^{3})} \\
    &\leq \frac{1}{h(k)\sqrt{1+\mu_k^2}} + CT^{\frac{1}{q}}|\mu_k-1|
\end{split}
\end{equation}
for some function $h\in C^1(\bbR_{+})$ with $h(x)\to\infty$ as $x\to \infty$. Here we also used the continuous embedding $H^3 \hookrightarrow W^{1,\infty}$. Since $\mu_k\to 1$ as $k\to\infty$, the first part \eqref{valid} of Theorem~\ref{dich} follows as desired.

We consider an intermediate sequence $\{(\widetilde{\Omega}_k,\widetilde{N}_k)\}_{k\in\bbN}$ with the fixed ratio $\frac{\widetilde{\Omega}_k}{\widetilde{N}_k}=1$ for any $k\in \bbN$; in particular, we set $\widetilde{\Omega}_k:=\Omega_k$.  Let $\widetilde{u}^{(k)}$ be the corresponding unique classical solution to \eqref{eq_main} on $[0,T]$ with $\Omega=\widetilde{\Omega}_k$ and $N=\widetilde{N}_k$ for each $k\in\bbN$. Then the triangular inequality gives us
\begin{equation}
    \begin{split}
        \|u^{(k)}-\nabla_1 \psi^1\|_{L^q(0,T;W^{1,\infty})} & \geq \|u^{(k)}-\nabla_1 \psi^1\|_{L^q(0,t_0;W^{1,\infty})} \\ &\geq \|\widetilde{u}^{(k)}-\nabla_1 \psi^1\|_{L^q(0,t_0;W^{1,\infty})} - \| u^{(k)}-\widetilde{u}^{(k)}\|_{L^q(0,t_0;W^{1,\infty})} \\
        &\geq \frac{1}{2}At_0^{\frac{1}{q}} -  C(N_k-\widetilde{N}_k)t_0^{\frac{1}{q}}
    \end{split}
\end{equation}
for some $t_0\in(0,T_{\ast})$, where $A=A(u_0)$ is independent of $k$ and $T_{\ast}=T_{\ast}(m,\|u_0\|_{H^m})$ is the maximal existence time, independent of $k$ as well. In the above estimation, we use Theorem~\ref{thm3} and Proposition~\ref{loc_diff}. Since $\widetilde{N}_k=\widetilde{\Omega}_k=\Omega_k=N_k\mu_k$ implies $N_k-\widetilde{N}_k = (1-\mu_k)N_k$, the slow condition $N_k=o(|1-\mu_k|^{-1})$ yields the convergence $|N_k-\widetilde{N}_k|\to 0$ as $k\to\infty.$ Then the second part \eqref{invalid} follows as well. This finishes the proof of Theorem~\ref{dich}.
\end{proof}

\begin{proof}[Proof of Corollary~\ref{cor_div}] Take any sequence $\{ \mu_k \}_{k \in \bbN}$ such that $$\mu_k \to 1 \qquad \mbox{and} \qquad C_{\mu_k}|1-\mu_k| \to 0 \qquad \mbox{as} \qquad k \to \infty.$$ We define $$N_k := \frac{\sqrt{C_{\mu_k}}}{\sqrt{|1-\mu_k|}} \qquad \mbox{and} \qquad \Omega_{k} := N_{k} \mu_{k}.$$ Assume for a contradiction that $h(k) = 1/\sqrt{C_{\mu_k} |1-\mu_k|}$. Then, we can see $$\frac{N_{k}}{C_{\mu_k}} = h(k) \to \infty \qquad \mbox{at the same time} \qquad N_k|1-\mu_k| = \frac{1}{h(k)}\to 0 \qquad \mbox{as} \qquad k \to \infty.$$
Then the sequence $\{(\Omega_l,N_l)\}_{l\in\bbN}$ satisfies both \eqref{fast} and \eqref{slow}, yielding the results \eqref{valid} and \eqref{invalid}, simultaneously. This is a contradiction, which finishes the proof.
\end{proof}

\section{Convergence and non-convergence in $H^{m-3}$ for $\mu\in(0,\infty)$}

\subsection{Proof of Theorem~\ref{rmk_div}}
\begin{proof}[Proof of Theorem~\ref{rmk_div}]
Assume that the hypotheses for Theorem~\ref{rmk_div} are true. Fix any $s \in \bbN$ with $s \in [3,m-3]$. A simple triangular inequality holds as
\begin{equation}\label{eng_Hs}
    \| u(t) - u^{\mu}(t) \|_{H^s} \geq \left\|\sum_{\gamma=\pm}e^{\gamma it N P_{\mu}(D)} P_{\gamma} u_0\right\|_{H^s} - \left\| \sum_{\gamma=\pm} (u^\gamma - e^{\gamma i t N P_{\mu}(D)} P_{\gamma} u_0)\right\|_{H^s} - \| \mathcal{E}(t) \|_{H^{s}},
\end{equation}
for all $t\in[0,T].$

\noindent \textbf{Step 1. Decay-in-$N$ of the inhomogeneous parts of $u^{\pm}$}

\noindent Our claim is that
\begin{equation}\label{Hs_upm_est}
	\sup_{t \in [0,T]} \left\| \sum_{\gamma=\pm} (u^\gamma - e^{\gamma i t N P_{\mu}(D)} P_{\gamma} u_0) \right\|_{H^{s+1}} \leq \frac CN (C_L^2 + TC_L^3).
\end{equation}
To prove the claim, we rely on the Duhamel formula
\begin{equation*}
	u^{\pm}(t)=e^{\pm itNp_\mu(D)}P_{\pm}u_0 - \int_{0}^{t}e^{\pm i (t-\tau)N p_\mu(D)}P_{\pm}(u^{\mu}(s)\cdot\widetilde{\nabla})u^{\mu}(s)\,\mathrm{d}\tau
\end{equation*}
which reduces \eqref{Hs_upm_est} to the following decay-in-$N$ estimate
\begin{equation*}
	\left\| \int_{0}^{t}e^{\pm i (t-\tau)N p_\mu(D)}P_{\pm}(u^{\mu}(\tau)\cdot\widetilde{\nabla})u^{\mu}(\tau)\,\mathrm{d}\tau \right\|_{H^{s+1}} \leq \frac CN (C_L^2 + TC_L^3).
\end{equation*}
Taking the Fourier transform in $\bbR^3$ and performing the integration by parts over time, we compute
\begin{gather*}
    \int_{0}^{t}e^{\pm i (t-\tau)N\frac {|\xi_{\mu}|}{|\xi|}} \langle (u^{\mu}(\tau)\cdot\widetilde{\nabla})u^{\mu}(\tau), b_{\pm} \rangle b_{\pm} \,\mathrm{d}\tau \\
	= \frac{|\xi|}{\pm iN|\xi_{\mu}|} \int_{0}^{t}e^{\pm i (t-\tau)N\frac {|\xi_{\mu}|}{|\xi|}}P_{\pm}\big[(\partial_t u^{\mu}(\tau)\cdot\widetilde{\nabla})u^{\mu}(\tau)+ (u^{\mu}(\tau)\cdot\widetilde{\nabla})\partial_t u^{\mu}(\tau)\big]\,\mathrm{d}\tau \\
    -\frac{|\xi|}{\pm iN|\xi_{\mu}|} \bigg[e^{\pm i (t-\tau)N\frac {|\xi_{\mu}|}{|\xi|}}P_{\pm}(u^{\mu}(\tau)\cdot\widetilde{\nabla})u^{\mu}(\tau)\bigg]_{\tau=0}^{t}.
\end{gather*}
We use the fact $\frac {|\xi|}{|\xi_{\mu}|} \leq \max \{1,\mu^{-1} \}$ to estimate the boundary term as
\begin{align*}
	\left\| -\frac{|\xi|}{\pm iN|\xi_{\mu}|} \bigg[e^{\pm i (t-\tau)N\frac {|\xi_{\mu}|}{|\xi|}}P_{\pm}(u^{\mu}(\tau)\cdot\widetilde{\nabla})u^{\mu}(\tau)\bigg]_{\tau=0}^{t} \right\|_{H^{s+1}} &\leq \frac{C}{N} \sup_{t \in [0,T]} \big\|(u^{\mu}(t)\cdot\widetilde{\nabla})u^{\mu}(t)\big\|_{H^{s+1}} \\
	&\leq \frac CN \sup_{t \in [0,T]} \| u^{\mu} \|_{H^{s+2}}^2.
\end{align*}
It remains to control the main time integral; to deal with the time derivatives of $u^\mu$, we invoke the equations
\begin{equation*}
	\partial_t u^{\mu}+P_{\mu} (u^{\mu} \cdot \widetilde{\nabla}) u^{\mu} = 0.
\end{equation*}
The limit equations replace the time derivatives by the nonlinearity involving space derivatives only, and so we get
\begin{gather*}
	\left\| \frac{|\xi|}{\pm iN|\xi_{\mu}|} \int_{0}^{t}e^{\pm i (t-\tau)N\frac {|\xi_{\mu}|}{|\xi|}}P_{\pm}\big[(\partial_t u^{\mu}(\tau)\cdot\widetilde{\nabla})u^{\mu}(\tau)+ (u^{\mu}(\tau)\cdot\widetilde{\nabla})\partial_t u^{\mu}(\tau)\big]\,\mathrm{d}\tau \right\|_{H^{s+1}} \\
	\leq \frac CN T \sup_{t \in [0,T]} \| u^{\mu} \|_{H^{s+3}}^3,
\end{gather*}
where we used $H^{s+1}$ algebra and boundedness of $P_{\pm}$ in $H^{s+1}.$ From the previous estimates, we may deduce \eqref{Hs_upm_est} by the use of the Plancherel theorem.

\noindent \textbf{Step 2. Local-in-time smallness of the difference $\mathcal{E}$}

\noindent Following the way we showed \eqref{eps_est}, we use \eqref{Hs_upm_est} and find $t_0 \in (0,T]$ satisfying \begin{equation*}
    \sup_{t \in [0,t_0]} \| \mathcal{E} \|_{H^{s}}  \leq \varepsilon \left( \| P_{\pm} u_0 \|_{H^{s+1}} + \frac CN (C_L^2 + TC_L^3) \right),
\end{equation*}
where $\varepsilon$ is an arbitrary positive number.

\noindent \textbf{Step 3. Preservation of largeness via orthogonality}

\noindent It is clear that by orthogonality we have for any $\ell \in [0,m]$ that
\begin{equation}\label{Hs_initial}
	\left\| \sum_{\gamma = \pm} e^{\gamma i t N P_{\mu}(D)} P_{\gamma} u_0 \right\|_{H^\ell}^2 = \sum_{\gamma = \pm} \left\| P_{\gamma} u_0 \right\|_{H^\ell}^2 = \big\| u_0 - P_\mu u_0 \big\|_{H^\ell}^2, \qquad t \in [0,T].
\end{equation}
\noindent We are ready to prove Theorem \ref{rmk_div}. Collecting the above estimates, we deduce from \eqref{eng_Hs} that
\begin{align*}
	\inf_{t \in [0,t_0]} \| u - u^{\mu} \|_{H^{s}} &\geq \| u_0 - P_\mu u_0 \|_{H^s} - \sum_{\gamma=\pm} \sup_{t \in [0,t_0]} \big\| u^\gamma - e^{\gamma i t N P_{\mu}(D)} P_{\gamma} u_0 \big\|_{H^s} - \sup_{t \in [0,t_0]} \| \mathcal{E}(t) \|_{H^{s}} \\
	&\geq \| u_0 - P_\mu u_0 \|_{H^s} - \frac CN (C_L^2 + TC_L^3) - \varepsilon \left(  \sum_{\gamma=\pm} \| P_{\gamma} u_0 \|_{H^{s+1}} + \frac CN (C_L^2 + TC_L^3)\right) \\
	&= \| u_0 - P_\mu u_0 \|_{H^s} -\varepsilon \sum_{\gamma=\pm} \| P_{\gamma} u_0 \|_{H^{s+1}} - \frac CN (C_L^2 + TC_L^3).
\end{align*}
We fix $\varepsilon$ to satisfy $\varepsilon \sum_{\gamma=\pm} \| P_{\gamma} u_0 \|_{H^{s+1}} \leq \frac 14 \| u_0 - P_\mu u_0 \|_{H^{s}}$. Then, taking $R>0$ with $ \frac C{R}(C_L^2 + TC_L^3) \leq  \frac 14 \| u_0 - P_\mu u_0 \|_{H^s}$, we obtain
\begin{align*}
	\inf_{t \in [0,t_0]} \| u - u^{\mu} \|_{H^{s}} \geq \frac 12 \| u_0 - P_\mu u_0 \|_{H^{s}}
\end{align*}
for all $N > R$. This completes the proof.
\end{proof}
\subsection{Proof of Theorem~\ref{thm2}}
\begin{proof}[Proof of Theorem~\ref{thm2}]
Assume that the hypotheses for Theorem~\ref{thm2} are true. For any given $\Omega>0$ and $N>0$, by the standard local well-posedness, there exists $T_\ast>0$ such that \eqref{eq_main} possesses a unique local-in-time solution $u$ satsifying $$u \in C([0,T_\ast];H^m(\bbR^3))\cap C^1((0,T_\ast);H^{m-1}(\bbR^3)).$$ To show that the local-in-time solution can be uniquely extended to the entire $[0,T]$, thanks to the blow-up criterion of \eqref{eq_main}, it suffices to obtain that
\begin{equation}\label{1.8claim}
	\sup_{t \in [0,T]} \| u \|_{H^{s}} < \infty, \qquad s > \frac 52.
\end{equation}
We compute
\begin{equation}
\begin{split}
\| u \|_{H^{s}} &\leq \| u^{\mu} \|_{H^{s}} + \left\| \sum_{\gamma=\pm} e^{\gamma itN p_{\mu}(D)} P_{\gamma} u_0 \right\|_{H^{s}} + \sum_{\gamma=\pm}\| u^{\gamma} - e^{\gamma itN p_{\mu}(D)} P_{\gamma} u_0 \|_{H^{s}} + \| \mathcal{E} \|_{H^{s}}\\
&\leq C_L + \left\| u_0 - P_\mu u_0 \right\|_{H^s} + \frac{C}{N}(C_L^2+TC_L^3) +\|\mathcal{E}\|_{H^s} \\
&\leq C + \|\mathcal{E}\|_{H^s}
\end{split}
\end{equation}
for some $C=C(\mu,T,N,\|u_0\|_{H^{m}}),$ due to \eqref{upp_u}, \eqref{Hs_upm_est}, and \eqref{Hs_initial} that were previously obtained. Thus showing \eqref{1.8claim} is equivalent to prove $$
\sup_{t\in[0,T]}\|\mathcal{E}\|_{H^s}<\infty.$$ To this end, we need to perform the standard $H^s$ estimate of $\mathcal{E}$ for $s \in \bbN$ satisfying $s \in [3,m-3]$. Since $\mathcal{E}(0)=0,$ we may choose a possibly small $t_0 \in (0,T]$ to satisfy
\begin{equation}\label{E_ass_2}
	\sup_{t \in [0,t_0]} \| \mathcal{E} \|_{H^{s}} \leq \sup_{t \in [0,T]} \sum_{\gamma=\mu,\pm} \| u^\gamma \|_{H^{s+1}}.
\end{equation}
This leads to, with the help of \eqref{Hs_E_est} and \eqref{Hs_upm_est}, the following $H^s$ energy inequality
\begin{gather*}
	\frac{\ud}{\ud t} \| \mathcal{E} \|_{H^{s}} \leq C \left( \sup_{t \in [0,T]} \sum_{\gamma=\mu,\pm} \| u^\gamma \|_{H^{s+1}} \right) \| \mathcal{E} \|_{H^{s}} \\
	+ C \sup_{t \in [0,T]} \sum_{\gamma=\mu,\pm} \| u^\gamma \|_{H^{s+1}} \left( \sum_{\gamma=\pm} \| P_{\gamma} u_0 \|_{H^{s+1}} + \frac CN (C_L^2 + TC_L^3) \right).
\end{gather*}
The Gr\"{o}nwall inequality further implies
\begin{gather*}
	\sup_{t \in [0,t_0]} \| \mathcal{E} \|_{H^{s}} 
	\leq CT \sup_{t \in [0,T]} \sum_{\gamma=\mu,\pm} \| u^\gamma \|_{H^{s+1}}  \\
 \times \left( \sum_{\gamma=\pm} \| P_{\gamma} u_0 \|_{H^{s+1}} + \frac CN (C_L^2 + TC_L^3) \right) \exp \left(CT \sup_{t \in [0,T]} \sum_{\gamma=\mu,\pm} \| u^\gamma \|_{H^{s+1}} \right).
\end{gather*}
It is enough to estimate the right-hand side. Here the assumption $\sum_{\gamma=\pm} CTe^{CT} \| P_{\gamma} u_0 \|_{H^{s+1}} \leq \frac{1}{2}$ kicks in importantly; it allows us to take a sufficiently large $R>0$ such that
\begin{align*}
	CT \left( \sum_{\gamma=\pm} \| P_{\gamma} u_0 \|_{H^{s+1}} + \frac CR (C_L^2 + TC_L^3) \right) \exp \left(CT \sup_{t \in [0,T]} \sum_{\gamma=\mu,\pm} \| u^\gamma \|_{H^{s+1}} \right) \leq \frac{1}{2}.
\end{align*}
This implies that we have
\begin{equation}\label{E_3}
    \sup_{t \in [0,t_0]} \| \mathcal{E} \|_{H^{s}} \leq \frac{1}{2}\sup_{t \in [0,T]} \sum_{\gamma=\mu,\pm} \| u^\gamma \|_{H^{s+1}}.
\end{equation}
holds for all $N > R$. To encapsulate, the initial smallness \eqref{E_ass_2} induces the further smallness \eqref{E_3}, which is the applicable form for a standard continuity argument. It turns out that we can choose $t_0 = T$, i.e.,
\begin{equation}\label{eps_Hs_est}
	\sup_{t \in [0,T]} \| \mathcal{E} \|_{H^{s}} \leq \sup_{t \in [0,T]} \sum_{\gamma=\mu,\pm} \| u^\gamma \|_{H^{s+1}}.
\end{equation}
Combining the above estimates, we conclude that \eqref{1.8claim} holds as desired.

Now we prove the convergence part. Note that we additionally assumed that $P_{\pm} u_0 = 0$ for the convergence regardless of $\mu.$ Such an assumption leads to
\begin{equation}\label{sing_lim_eng}
    \sup_{t \in [0,T]} \| u - u^{\mu} \|_{H^{s}} \leq \sum_{\gamma=\pm} \sup_{t \in [0,T]} \| u^{\gamma} \|_{H^{s}} + \sup_{t \in [0,T]} \| \mathcal{E} \|_{H^{s}}.
\end{equation} by a simple triangle inequality. From \eqref{upp_u}, \eqref{Hs_E_est}, \eqref{Hs_upm_est}, and \eqref{eps_Hs_est}, we obtain the $H^s$ estimate
\begin{equation*}
\begin{split}
    \frac{\ud}{\ud t} \| \mathcal{E} \|_{H^{s}} &\leq C \sum_{\gamma=\mu,\pm} \| u^\gamma \|_{H^{s+1}} \| \mathcal{E} \|_{H^{s}} + C \left( \sum_{\gamma=\pm}\| u^{\gamma} \|_{H^{s+1}} \right) \left( \sum_{\gamma=\mu,\pm} \| u^\gamma \|_{H^{s+1}} \right) \\
    &\leq C\|\mathcal{E}\|_{H^{s}}+\frac{C}{N}
\end{split}
\end{equation*}
for some $C=C(m,\mu,T,\|u_0\|_{H^m})$ as long as $N>R$, where $R$ is a positive constant obtained in the proof of the existence part. The Gr\"{o}nwall inequality gives
\begin{align*}
	\sup_{t \in [0,T]} \| \mathcal{E} \|_{H^{s}} &\leq \frac{CT}{N} \exp (CT).
\end{align*}
Using \eqref{sing_lim_eng}, we establish that
\begin{align*}
	\sup_{t \in [0,T]} \| u - u^{\mu} \|_{H^{s}} &\leq \frac{CT}{N} + \frac{CT}{N} e^{CT} \leq \frac CN
\end{align*}
for some $C=C(m,\mu,T,\|u_0\|_{H^m})$ as desired. This finishes the proof.
\end{proof}

\appendix

\section{Appendix}

\subsection{Continuity in $N$ for the Boussinesq equations}
\begin{proposition}\label{loc_diff}
    Let $(\Omega,N)$ and $(\widetilde{\Omega},\widetilde{N})$ be pairs of positive constants with $\Omega = \widetilde{\Omega}$, and $u_0$ be the initial data belonging to $H^m(\bbR^3)$ for some $m>\frac{5}{2}$. Let $u$ and $\widetilde{u}$ be the corresponding local-in-time solutions with the maximal time $T_{\ast}>0$ such that \eqref{loc_est} holds. Then, for any $T \in (0,T_{\ast})$, there exists $C=C(m,T_\ast)>0$ such that 
    \begin{equation}\label{conv_tilde}
        \sup_{t \in [0,T]} \| u(t) - \widetilde{u}(t) \|_{H^{m-1}} \leq C(N-\widetilde{N}) e^{C\| u_0 \|_{H^m}T}.
    \end{equation}
\end{proposition}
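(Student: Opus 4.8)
The plan is to derive an equation for the difference $\delta u := u - \widetilde u$ and run a Gr\"onwall-type energy estimate in $H^{m-1}$, exploiting that $\Omega = \widetilde\Omega$ so the only discrepancy in the linear parts comes from the stratification terms proportional to $N - \widetilde N$. First I would write both solutions via the amalgamated formulation \eqref{UEQ}. With $u_t + N\widetilde{\mathbb{P}}J\widetilde{\mathbb{P}}u + \widetilde{\mathbb{P}}(u\cdot\widetilde\nabla)u = 0$ and the analogous equation for $\widetilde u$ with $N$ replaced by $\widetilde N$, subtraction yields
\begin{equation*}
\partial_t \delta u + N\widetilde{\mathbb{P}}J\widetilde{\mathbb{P}}\,\delta u + (N-\widetilde N)\widetilde{\mathbb{P}}J\widetilde{\mathbb{P}}\,\widetilde u + \widetilde{\mathbb{P}}(u\cdot\widetilde\nabla)\delta u + \widetilde{\mathbb{P}}(\delta u\cdot\widetilde\nabla)\widetilde u = 0,
\end{equation*}
where I have used that the matrix $J$ in \eqref{proj_notation} depends on $\mu = \Omega/N$; since $\Omega$ is common, writing $NJ$ as $\Omega$-block plus $N$-block shows the term $N\widetilde{\mathbb{P}}J\widetilde{\mathbb{P}}u - \widetilde N\widetilde{\mathbb{P}}\widetilde J\widetilde{\mathbb{P}}\widetilde u$ splits exactly as above with the forcing $(N-\widetilde N)$ multiplying a fixed skew-symmetric operator acting on $\widetilde u$. (Concretely, $NJ$ has the $\Omega$-rotation block independent of $N$ and the stratification block scaling like $N$, so only the latter produces the $(N-\widetilde N)$ factor.)

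Next I would apply $\partial^\alpha$ for $|\alpha|\le m-1$, pair with $\partial^\alpha \delta u$ in $L^2$, and sum. The skew-symmetric term $N\widetilde{\mathbb{P}}J\widetilde{\mathbb{P}}$ contributes nothing to the energy identity because $\langle N\widetilde{\mathbb P}J\widetilde{\mathbb P}\,\partial^\alpha\delta u, \partial^\alpha\delta u\rangle = 0$ — this is precisely the $N$-independence that underlies \eqref{loc_est}. The transport term $(u\cdot\widetilde\nabla)\partial^\alpha\delta u$ paired with $\partial^\alpha\delta u$ vanishes by the divergence-free condition, and the commutator $[\partial^\alpha, u\cdot\widetilde\nabla]\delta u$ is handled by the standard Kato–Ponce estimate, giving a term $\lesssim \|u\|_{H^{m-1}}\|\delta u\|_{H^{m-1}}$. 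The term $\widetilde{\mathbb{P}}(\delta u\cdot\widetilde\nabla)\widetilde u$ contributes $\lesssim \|\widetilde u\|_{H^m}\|\delta u\|_{H^{m-1}}^{\,}$ — here I need one extra derivative on $\widetilde u$, which is why the hypothesis $u_0\in H^m$ (not just $H^{m-1}$) is used, and why the conclusion is stated in $H^{m-1}$. Finally the forcing term $(N-\widetilde N)\widetilde{\mathbb P}J\widetilde{\mathbb P}\widetilde u$ contributes $\le (N-\widetilde N)\,C\,\|\widetilde u\|_{H^{m-1}}\|\delta u\|_{H^{m-1}}$ since $\widetilde{\mathbb P}J\widetilde{\mathbb P}$ is bounded on $H^{m-1}$ (it involves a zeroth-order Fourier multiplier composed with the fixed matrix $J$). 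Collecting everything and dividing by $\|\delta u\|_{H^{m-1}}$,
\begin{equation*}
\frac{\mathrm{d}}{\mathrm{d}t}\|\delta u\|_{H^{m-1}} \le C\big(\|u\|_{H^{m-1}} + \|\widetilde u\|_{H^m}\big)\|\delta u\|_{H^{m-1}} + C(N-\widetilde N)\|\widetilde u\|_{H^{m-1}}.
\end{equation*}

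To close, I would invoke the a priori bound \eqref{loc_est}: on $[0,T]$ with $T < T_\ast$, both $\|u\|_{H^m}$ and $\|\widetilde u\|_{H^m}$ are bounded by $C_1\|u_0\|_{H^m}$ with $C_1 = C_1(m,T)$ independent of $N,\widetilde N$. Since $\delta u(0) = 0$, Gr\"onwall's inequality gives
\begin{equation*}
\sup_{t\in[0,T]}\|\delta u(t)\|_{H^{m-1}} \le C(N-\widetilde N)\,T\,C_1\|u_0\|_{H^m}\,e^{CC_1\|u_0\|_{H^m}T},
\end{equation*}
which is of the claimed form $C(N-\widetilde N)e^{C\|u_0\|_{H^m}T}$ after absorbing the polynomial prefactor into the exponential and renaming constants (all depending only on $m$ and $T_\ast$, since $T<T_\ast$ controls $C_1$). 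The main obstacle is bookkeeping rather than anything deep: one must carefully verify that the splitting of $NJ$ into an $\Omega$-part and an $N$-part produces exactly one forcing term proportional to $N-\widetilde N$ with no hidden $N$-dependence elsewhere, and that the projector $\widetilde{\mathbb P}$ and the operator $\widetilde{\mathbb P}J\widetilde{\mathbb P}$ are genuinely order-zero so that no derivative loss occurs in the forcing term — this is what makes $H^{m-1}$, rather than a lower-regularity space, the right place to measure the difference.
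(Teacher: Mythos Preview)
Your proposal is correct and follows essentially the same approach as the paper: derive the difference equation, use skew-symmetry of the linear operator to eliminate the large-parameter term, bound the transport terms via commutator/product estimates together with the a priori bound \eqref{loc_est}, and close with Gr\"onwall from $\delta u(0)=0$. The only cosmetic difference is that the paper works directly with the $(v,\theta)$ formulation \eqref{eq_main} rather than the amalgamated form \eqref{UEQ}; your splitting of $NJ$ into an $\Omega$-block and an $N$-block (so that only the stratification part produces the $(N-\widetilde N)$ forcing) is exactly what the paper does componentwise, though note the forcing operator is the stratification sub-block of $J$, not $J$ itself as your displayed equation literally reads.
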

\begin{proof}
   From our system \eqref{eq_main} we derive that $\partial_t(v - \widetilde{v})$ equals
    \begin{gather*}
          -((v - \widetilde{v}) \cdot \nabla)v - (\widetilde{v} \cdot \nabla) (v - \widetilde{v}) - \Omega e_3 \times (v - \tilde{v})
        - \nabla(p - \widetilde{p}) + (N - \widetilde{N}) \tht e_3 + \widetilde{N}(\tht - \widetilde{\tht})
    \end{gather*}
    and $\partial_t(\tht - \widetilde{\tht})$ equals
    \begin{gather*}
          -((v - \widetilde{v}) \cdot \nabla)\tht - (\widetilde{v} \cdot \nabla) (\tht - \widetilde{\tht}) - (N - \widetilde{N}) v_3 - \widetilde{N}(v_3 - \widetilde{v}_3).
    \end{gather*}
    Thus, we can see for any $k \in \bbN \cup \{ 0 \}$ with $0 \leq k \leq m-1$ that $\frac{1}{2} \frac{\ud}{\ud t} \| u - \widetilde{u} \|_{\dot{H}^{k}}^2$ is bounded above by
    \begin{gather*}
         \sum_{|\alpha| = k} \int |\partial^{\alpha}[((v - \widetilde{v}) \cdot \nabla)v]| |\partial^{\alpha}(v - \widetilde{v})| \,\ud x 
        - \sum_{|\alpha| = k} \int \partial^{\alpha}[(\widetilde{v} \cdot \nabla) (v - \widetilde{v})] \cdot \partial^{\alpha} (v - \widetilde{v}) \,\ud x \\
        + \sum_{|\alpha| = k} \int |\partial^{\alpha}[((v - \widetilde{v}) \cdot \nabla)\tht]| |\partial^{\alpha}(\tht - \widetilde{\tht})| \,\ud x 
        - \sum_{|\alpha| = k} \int \partial^{\alpha}[(\widetilde{v} \cdot \nabla) (\tht - \widetilde{\tht})] \cdot \partial^{\alpha} (\tht - \widetilde{\tht}) \,\ud x \\
        + |N - \widetilde{N}| \sum_{|\alpha| = k} \left( \int \partial^{\alpha}\tht \partial^{\alpha}(v - \widetilde{v}) \,\ud x + \int \partial^{\alpha}v_d \partial^{\alpha}(\tht - \widetilde{\tht}) \,\ud x \right).
        \end{gather*}
        Using \eqref{loc_est}, H\"older's inequality and the continuous embedding $H^{m-1}(\bbR^3) \hookrightarrow L^{\infty}(\bbR^3)$, the first and third integral are bounded by $C \| u_0 \|_{H^m} \| u - \widetilde{u} \|_{H^{m-1}}^2$. With the integration by parts, the second and fourth integral have same upper bound. On the other hand, it holds
        \begin{gather*}
        |N - \widetilde{N}| \sum_{|\alpha| = k} \left| \int \partial^{\alpha}\tht \partial^{\alpha}(v - \widetilde{v}) \,\ud x + \int \partial^{\alpha}v_d \partial^{\alpha}(\tht - \widetilde{\tht}) \,\ud x \right| \leq C(N- \widetilde{N}) \| u_0 \|_{H^{m-1}} \| u - \widetilde{u} \|_{H^{m-1}}.
        \end{gather*}
        Thus, we deduce that
        \begin{gather*}
        \frac{1}{2} \frac{\ud}{\ud t} \| u - \widetilde{u} \|_{H^{m-1}}^2 \leq C \| u_0 \|_{H^m} \| u - \widetilde{u} \|_{H^{m-1}} (\| u - \widetilde{u} \|_{H^{m-1}} + (N- \widetilde{N})).
        \end{gather*}
        By Gr\"onwall's inequality, \eqref{conv_tilde} is obtained. This completes the proof.
\end{proof}

\subsection{Global well-posedness of the QG equations}\label{GE_QG}

In this subsection, we formally derive the QG equations as the limit equations, and prove that the solutions to the QG equations are well-defined up to the arbitrary fixed time $T$ for any $H^s$ initial data with any real number $s>\frac{5}{2}$. This is necessary to validate the convergence process for the entire given time interval $[0,T]$. Note that a similar global existence result was obtained in \cite{BB} for $H^s$ with integer $s\geq 3$ on a box-shaped domain.

We first give a formal derivation of the limit equations. Fix any $\mu \in (0,\infty)$. Recall from \eqref{def_b} and \eqref{def_proj} that 
$$
P_\mu w = \int e^{2\pi i x \cdot \xi}\langle \mathscr{F} w(\xi),b_\mu(\xi)\rangle_{\bbC^{4}} b_\mu(\xi) \, \mathrm{d}\xi, \qquad b_\mu(\xi) = \frac {\xi_\mu}{|\xi_\mu|},
$$
where $\xi_{\mu}=(-\xi_2,\xi_1,0,\mu \xi_3)^{\top}.$ Then we can write the limit system as
\begin{equation}\label{LS}
\left\{
\begin{array}{ll}
u^\mu_t + P_\mu[(u^\mu \cdot \widetilde{\nabla})u^\mu] = 0, \qquad P_\mu u^\mu = u^\mu,\\
u^\mu(0,x) = P_\mu u_0(x).
\end{array}
\right.
\end{equation}
To justify the above expression as the limit QG system, we show the formal equivalence between \eqref{LS} and the QG equations \eqref{QG}.
If we define $\psi^\mu$ by $$\psi^\mu := -(-\Delta_\mu)^{-1} \nabla_\mu \cdot u^\mu 
,$$ where $\Delta_\mu = \partial_1^2+\partial_2^2+\mu^2\partial_3^2$ and $\nabla_{\mu} = (-\partial_2,\partial_1,0,\mu\partial_3)^{\top}$, then the identity
\begin{equation}\label{u_phi}
u^\mu = P_\mu{u}^\mu = -\nabla_\mu (-\Delta_\mu)^{-1} \nabla_\mu \cdot u^\mu = \nabla_\mu \psi^{\mu},
\end{equation}
allows us to compute as
\begin{align*}
\nabla_\mu \cdot P_\mu[(u^\mu \cdot \widetilde{\nabla})u^\mu] = \nabla_\mu \cdot P_\mu[(\nabla_\mu \psi^{\mu} \cdot \widetilde{\nabla})\nabla_\mu \psi^{\mu}] = \nabla_\mu \cdot (\nabla_\mu \psi^{\mu} \cdot \widetilde{\nabla})\nabla_\mu \psi^{\mu}. 
\end{align*}
Once we recall \eqref{proj_notation} for the definition of $\widetilde{\nabla}$, a direct computation gives
\begin{align*}
\nabla_\mu \cdot (\nabla_\mu \psi^{\mu} \cdot \widetilde{\nabla})\nabla_\mu \psi^{\mu}= (\nabla_\mu \psi^{\mu} \cdot \widetilde{\nabla})\Delta_\mu \psi^{\mu} = (\nabla_{H}^{\perp} \psi^{\mu} \cdot \nabla_H)\Delta_\mu \psi^{\mu}.
\end{align*}
We see that $\psi^\mu$ satisfies \eqref{QG} as
$$
\left\{
\begin{array}{ll}
\Delta_{\mu} \psi^{\mu}_t +(\mathbf{v}_{H}\cdot \nabla_{H})\Delta_{\mu} \psi^{\mu} =0, \qquad \mathbf{v}_{H} := \nabla_{H}^{\perp} \psi^{\mu}, \\
\nabla_{\mu}\psi^{\mu}(0,x) = u^\mu(0,x) = P_\mu u_0(x).
\end{array}
\right.
$$

\noindent  The above formal argument can be rigorously justified with a minor modification, which we omit here. Then it suffices to show that the system \eqref{LS} is well-posed for arbitrarily long time.

\begin{proposition}\label{uniform}
Let $m > 5/2$ and $\mu \in (0,\infty)$. Then for any $u^{\mu}_0 \in H^m(\mathbb{R}^3)$ and $T > 0$, the limit system \eqref{LS} possesses a unique classical solution
$$
u^\mu \in C([0,T];H^m(\mathbb{R}^3)) \cap C^1([0,T];H^{m-1}(\mathbb{R}^3)).
$$
Furthermore, there exists a constant $C = C(m) >0$ such that
\begin{equation}\label{upp_u}
\sup_{0 \leq t \leq T} \| u^\mu (t) \|_{H^m} \leq (1+\| u^{\mu}_0 \|_{H^m})^{ \exp \left(CT(1+(1+\mu^{-1})(1+\mu) \| u^{\mu}_0 \|_{H^m}) \right)} =: C_{L},
\end{equation}
\end{proposition}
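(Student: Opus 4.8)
The plan is to exploit the equivalence, derived formally just above, between \eqref{LS} and the quasi-geostrophic transport equation, and then to run the classical two-dimensional Euler argument in an anisotropic guise. The first observation is that solutions of \eqref{LS} are essentially ``two-and-a-half dimensional'': since $u^\mu_0=P_\mu u_0$ lies in the range of $P_\mu$ and $\partial_t u^\mu=-P_\mu[(u^\mu\cdot\widetilde{\nabla})u^\mu]$ again lies in that range, the constraint $P_\mu u^\mu=u^\mu$ is propagated, so $u^\mu=\nabla_\mu\psi^\mu$ for all times, the third velocity component vanishes identically, and $\mathbf{v}_H:=(u^\mu_1,u^\mu_2)=\nabla_H^\perp\psi^\mu$ is divergence-free on $\bbR^3$. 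Introducing $q:=\nabla_\mu\cdot u^\mu=\Delta_\mu\psi^\mu$ (no inverse Laplacian is needed at this stage) and applying $\nabla_\mu\cdot$ to \eqref{LS}, the identity $\nabla_\mu\cdot(P_\mu X)=\nabla_\mu\cdot X$, valid because $P_\mu$ projects onto multiples of $\xi_\mu$ and $\nabla_\mu\cdot$ has symbol proportional to $\xi_\mu\cdot$, together with the cancellations already recorded in the formal derivation, yields the active-scalar equation
\begin{equation*}
\partial_t q+\mathbf{v}_H\cdot\nabla_H q=0,\qquad u^\mu=\nabla_\mu\Delta_\mu^{-1}q,
\end{equation*}
i.e.\ an anisotropic version of $2$D Euler in vorticity form; the Biot--Savart map $q\mapsto u^\mu$ is a Fourier multiplier of order $-1$ with symbol comparable to $\xi_\mu/|\xi_\mu|^2$, so $\|u^\mu\|_{H^{\sigma+1}}\sim_\mu\|q\|_{H^\sigma}$ away from the zero frequency, the $\mu$-dependence entering only through $|\xi|/|\xi_\mu|\le 1+\mu^{-1}$.

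For local well-posedness I would set up a standard mollified iteration for this transport equation and close it with the $H^m$ energy inequality: using $P_\mu u^\mu=u^\mu$, the self-adjointness of $P_\mu$, the cancellation $\langle(u^\mu\cdot\widetilde{\nabla})\partial^\alpha u^\mu,\partial^\alpha u^\mu\rangle_{L^2}=0$ (which holds since the spatial velocity is divergence-free), and Kato--Ponce commutator estimates, one obtains $\frac{\ud}{\ud t}\|u^\mu\|_{H^m}\le C\|\nabla u^\mu\|_{L^\infty}\|u^\mu\|_{H^m}$, which is meaningful for $m>5/2$ because then $H^m(\bbR^3)\hookrightarrow W^{1,\infty}(\bbR^3)$; uniqueness follows from the analogous estimate for the difference of two solutions in a lower norm, and the time regularity $C^1([0,T];H^{m-1})$ is read off the equation. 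This produces a unique solution on a maximal interval $[0,T_\ast)$ together with the Beale--Kato--Majda-type continuation criterion: if $T_\ast<\infty$ then $\int_0^{T_\ast}\|\nabla u^\mu\|_{L^\infty}\,\ud t=\infty$.

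The quantitative heart, and the source of the double exponential in \eqref{upp_u}, is to bound this integral on any $[0,T]$. Since $\mathbf{v}_H$ is divergence-free, the transport equation preserves every $L^p$ norm of $q$; in particular $\|q(t)\|_{L^\infty}=\|q_0\|_{L^\infty}\le C\|q_0\|_{H^{m-1}}\le C(1+\mu)\|u^\mu_0\|_{H^m}$, using $H^{m-1}(\bbR^3)\hookrightarrow L^\infty(\bbR^3)$ for $m>5/2$. Next, writing $\nabla u^\mu=\nabla\nabla_\mu\Delta_\mu^{-1}q$ as a matrix of zero-order Calder\'on--Zygmund operators of $q$ with operator norms $\lesssim 1+\mu^{-1}$, the logarithmic Brezis--Gallouet/Kozono--Taniuchi inequality gives
\begin{equation*}
\|\nabla u^\mu(t)\|_{L^\infty}\le C(1+\mu^{-1})\,\|q(t)\|_{L^\infty}\,\log\!\big(e+\|u^\mu(t)\|_{H^m}\big).
\end{equation*}
Substituting into the $H^m$ inequality and setting $z(t):=\log(e+\|u^\mu(t)\|_{H^m})$ yields $z'\le C(1+\mu^{-1})(1+\mu)\|u^\mu_0\|_{H^m}(1+z)$; integrating this linear differential inequality reproduces precisely the bound \eqref{upp_u}, and in particular $\|u^\mu\|_{H^m}$ and hence $\int_0^T\|\nabla u^\mu\|_{L^\infty}\,\ud t$ remain finite for every $T>0$, so the solution is global.

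I expect this last step to be the main obstacle: one must recognize the conserved quantity $q$ and the clean divergence-free transport structure hiding behind the nonlocal projector $P_\mu$, invoke the logarithmic endpoint estimate rather than a crude $H^m\hookrightarrow L^\infty$ bound (which would only reprove local existence), and keep careful track of the anisotropy so that all constants depend on $\mu$ exactly through the combination $(1+\mu^{-1})(1+\mu)$. The low-frequency singularity of $\Delta_\mu^{-1}$ is harmless and can be handled by a Littlewood--Paley decomposition, working with $u^\mu$ and $q$ directly wherever no inverse Laplacian is genuinely required.
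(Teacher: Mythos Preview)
Your approach is essentially the paper's: exploit the transport structure for $q=\Delta_\mu\psi^\mu$ and the conserved $\|q\|_{L^\infty}$, combine a Kozono--Taniuchi logarithmic inequality with anisotropic Calder\'on--Zygmund bounds carrying the factor $(1+\mu^{-1})$, and close a double-exponential Gr\"onwall. The only cosmetic difference is where the logarithm enters: the paper inserts $\|\nabla\mathbf{v}_H\|_{BMO}$ directly into the $H^m$ energy inequality and then bounds that BMO norm by a scaling change of variables reducing the anisotropic multipliers to standard Riesz transforms, whereas you first bound $\|\nabla u^\mu\|_{L^\infty}$ by the logarithmic inequality and then plug it into the $L^\infty$-form of the energy estimate; both routes yield \eqref{upp_u}.
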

\begin{proof} We only give a brief sketch of the proof. One can obtain in a standard manner the following:
\begin{align*}
\frac 12 \frac {\ud}{\ud t} \| u^\mu \|_{\dot{H}^{m}}^2 &\leq C (1+\| \nabla \mathbf{v}_H \|_{BMO} \log^+ \| u^{\mu} \|_{H^m}) \| u^\mu \|_{\dot{H}^{m}}^2.
\end{align*}
Since Lemma~\ref{BMO} allows us to compute
\begin{align*}
\| \nabla \mathbf{v}_H \|_{BMO} \leq C (1+\mu^{-1}) \| \Delta_{\mu} \psi^{\mu} \|_{BMO}, 
\end{align*}
from a simple observation
\begin{align*}
\| \Delta_{\mu} \psi^{\mu} \|_{BMO} &\leq \| \Delta_\mu \psi^\mu \|_{L^\infty} = \| \Delta_\mu \psi^\mu_0 \|_{L^\infty} = \| \nabla_\mu \cdot u^{\mu}_0 \|_{L^\infty} \leq  C (1+\mu) \| u^{\mu}_0 \|_{H^m},
\end{align*}
it follows that
\begin{align*}
\| \nabla \mathbf{v}_H \|_{BMO} \leq C (1+\mu^{-1}) (1+\mu) \| u^{\mu}_0 \|_{H^m}.
\end{align*}
Combining the above estimates yields
\begin{align*}
 \frac{\ud}{\ud t}\| u^\mu \|_{H^{m}} &\leq C(1+(1+ \mu^{-1}) (1+\mu) \| u^{\mu}_0 \|_{H^m}  \log^+ \| u^{\mu} \|_{H^m})\| u^\mu \|_{H^{m}}.
\end{align*}
The Gr\"onwall inequality gives the desired result. 
\end{proof}

\begin{lemma}\label{BMO}
Let $S := \nabla_\mu \nabla_H^{\perp} (-\Delta_\mu)^{-1}$ for any $\mu \in \mathbb{R} \setminus \{ 0 \}$. Then there exists a constant $C > 0$ independent of $\mu$ such that
$$
\| S f \|_{BMO} \leq C \| f \|_{BMO}
$$
for any $f\in H^{s}(\bbR^3)$ with $s\geq \frac{3}{2}$.
\end{lemma}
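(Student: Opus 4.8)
The plan is to read off the Fourier multiplier symbol of $S$, recognize its entries as Calderón–Zygmund multipliers adapted to the degenerate elliptic operator $-\Delta_\mu$, and reduce the $BMO$ bound to classical theory; the entire difficulty is to keep the constant independent of $\mu$, and the mechanism for that is that $\mu$ enters the symbol only through the ``parabolic'' combination $|\xi_H|^2+\mu^2\xi_3^2$. First I would compute the symbol. Since $\mathscr F(\nabla_\mu)=2\pi i\,\xi_\mu$, $\mathscr F(\nabla_H^\perp)=2\pi i\,\xi_H^\perp$ and $\mathscr F((-\Delta_\mu)^{-1})=(2\pi)^{-2}|\xi_\mu|^{-2}$, the operator $S$ is the Fourier multiplier with matrix symbol $m_\mu(\xi)=-\,\xi_\mu\otimes\xi_H^\perp/|\xi_\mu|^2$, whose entries are, up to signs, $(\xi_H^\perp)_i(\xi_H^\perp)_j/D$ for $i,j\in\{1,2\}$ (the third row vanishes) and $\mu\xi_3(\xi_H^\perp)_j/D$, where $D:=|\xi_H|^2+\mu^2\xi_3^2=|\xi_\mu|^2$. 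Each entry is homogeneous of degree $0$ and $C^\infty$ on $\mathbb R^3\setminus\{0\}$, so for every fixed $\mu$ the operator $S$ is an honest (matrix-valued) Calderón–Zygmund operator and $m_\mu(D)$ sends constants to constants, i.e.\ to $0$ in $BMO$; hence $\|Sf\|_{BMO}\le C_\mu\|f\|_{BMO}$ with an a priori $\mu$-dependent constant. Equivalently, the substitution $\zeta=(\xi_1,\xi_2,\mu\xi_3)$ gives $m_\mu(\xi)=M(\zeta)$ for the fixed symbol $M$ of a composition of $3$D Riesz transforms, so that $S=U_{1/\mu}\circ T\circ U_\mu$ with $T$ the corresponding classical operator and $U_\lambda g(x)=g(x_1,x_2,\lambda x_3)$.

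For the uniformity in $\mu$ I would use the key algebraic point that, by AM–GM, $\mu^2\xi_3^2/D\le 1$ and $\mu|\xi_H||\xi_3|/D\le 1/2$, while $\partial_{\xi_H}D=2\xi_H$ and $\partial_{\xi_3}D=2\mu^2\xi_3$ give $|\xi_H|\,|\partial_{\xi_H}D|/D\lesssim 1$ and $|\xi_3|\,|\partial_{\xi_3}D|/D\lesssim 1$. Combining this with $|\xi_H|^{|\alpha|}|\xi_3|^{\beta}\,|\partial_{\xi_H}^\alpha\partial_{\xi_3}^\beta(\text{numerator})|\lesssim|\xi_H|^2$ or $\lesssim\mu|\xi_H||\xi_3|$, one obtains the \emph{bi-parameter} Mikhlin–Hörmander bounds
\[
|\xi_H|^{|\alpha|}\,|\xi_3|^{\beta}\,\bigl|\partial_{\xi_H}^{\alpha}\partial_{\xi_3}^{\beta}m_\mu(\xi)\bigr|\le C_{\alpha,\beta},\qquad C_{\alpha,\beta}\ \text{independent of }\mu,
\]
relative to the splitting $\mathbb R^3=\mathbb R^2_{(x_1,x_2)}\times\mathbb R_{x_3}$; equivalently the kernel of $S$ obeys size and smoothness estimates in the two variable blocks $x_H$ and $x_3$ separately with $\mu$-free constants. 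From here I would invoke the bi-parameter Calderón–Zygmund theory (Journé / Fefferman–Stein) to obtain $L^p$-bounds, $1<p<\infty$, uniform in $\mu$, and then upgrade to the $BMO$ bound; since each entry is, for each fixed $\mu$, also an ordinary one-parameter Mikhlin multiplier (so $m_\mu(D)1=0$ in the usual $BMO$ sense), the bi-parameter regularity with uniform constants is what allows the $BMO\to BMO$ estimate to close with a $\mu$-free constant. In the rescaled picture $S=U_{1/\mu}TU_\mu$, the same point reappears as the statement that, although $U_\mu$ conjugates $BMO$ over cubes into $BMO$ over boxes of aspect ratio $\mu$, the relevant transports are controlled not by a crude covering but by the product structure of the kernel.

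The main obstacle is precisely this last upgrade. A naive reduction to $\mu=1$, or a crude one-parameter Mikhlin estimate for $m_\mu$, produces a constant that blows up like $\max(\mu,\mu^{-1})$ as $\mu\to0$ or $\mu\to\infty$, because the entries of $m_\mu$ fail the one-parameter Mikhlin condition uniformly (for instance $\sup_\xi|\xi|\,|\partial_{\xi_1}(\xi_1\xi_2/D)|\sim\mu^{-1}$ as $\mu\to0$), and likewise the isotropic Hörmander integral for the kernel of $S$ loses a factor $\sim\log\max(\mu,\mu^{-1})$. The resolution must exploit that the $\mu$-dependence sits entirely in the non-degenerate combination $D=|\xi_H|^2+\mu^2\xi_3^2$, so that $m_\mu$ is uniformly well behaved with respect to the bi-parameter (product) structure rather than the isotropic one; carefully bookkeeping these bi-parameter kernel estimates and converting the resulting bounds into the stated $BMO$ inequality is the step I expect to require the most care.
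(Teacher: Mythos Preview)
Your conjugation identity $S=U_{1/\mu}\circ T\circ U_\mu$, with $T$ a fixed composition of Riesz transforms and $U_\lambda h(x)=h(x_1,x_2,\lambda x_3)$, is exactly the paper's reduction. From there, however, the paper finishes in one line rather than invoking product-type singular integral theory. It records that the mean oscillation of $\mu\,U_\mu h$ over a cube $Q$ equals $\mu$ times the mean oscillation of $h$ over the dilated set $Q_\mu=\{y:(y_1,y_2,y_3/\mu)\in Q\}$, and then simply writes
\[
\|Sf\|_{BMO}=\|(Sf)(x_1,x_2,\mu x_3)\|_{BMO}=\mu^{-1}\|R_iR_jg\|_{BMO}\le C\mu^{-1}\|g\|_{BMO}=C\|f\|_{BMO},
\]
where $g(x)=\mu f(x_1,x_2,\mu x_3)$. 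The only analytic input is the classical boundedness of $R_iR_j$ on $BMO$; there is no bi-parameter Mikhlin condition and no Journ\'e or Fefferman--Stein machinery. In other words, what you dismiss as the ``naive reduction to $\mu=1$'' \emph{is} the paper's proof: the paper treats the anisotropic dilation as leaving the $BMO$ norm invariant, so that the outer $U_{1/\mu}$ and inner $U_\mu$ contribute nothing.

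Independently of that comparison, your proposed route has a real gap. Bi-parameter Calder\'on--Zygmund theory for the splitting $\mathbb{R}^2_{x_H}\times\mathbb{R}_{x_3}$ gives boundedness on $L^p$ and on \emph{product} (Chang--Fefferman) $BMO$, which is a strictly smaller space than the one-parameter $BMO$ appearing in the lemma; there is no standard mechanism that upgrades uniform bi-parameter kernel bounds to a one-parameter $BMO\to BMO$ estimate. The sentence ``the bi-parameter regularity with uniform constants is what allows the $BMO\to BMO$ estimate to close with a $\mu$-free constant'' is not backed by any theorem I am aware of, and you yourself flag this conversion as the delicate step. Since, as you correctly observe, the one-parameter Mikhlin/H\"ormander constants of $m_\mu$ genuinely blow up in $\mu$, the bi-parameter framework alone does not reach the stated conclusion; you have to return to the conjugation identity and argue directly with the change of variables, as the paper does.
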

\begin{proof} Without loss of generality, we assume that  $f\in\mathcal{S}(\bbR^3)$ where $\mathcal{S}$ denotes the Schwartz space. Once we prove the statement for such $f,$ then a simple density argument in $H^s$ will finish the proof. Note also that $\|f\|_{BMO} \leq C \|f\|_{H^{3/2}}.$

In Fourier variables, we observe that
$\mathscr{F}(S f)(\xi)= \frac{\eta_i \eta_j}{|\eta|^2}\mathscr{F}(f)\Big(\eta_1,\eta_2,\frac{\eta_3}{\mu}\Big)$ for $i,j = 1,2,3,$
once we set $(\eta_1,\eta_2,\eta_3) = (\xi_1,\xi_2, \mu\xi_3)$. Taking inverse Fourier transform in $\eta$, we get
$$
\mu (S f) (x_1,x_2,\mu x_3) = R_{i}R_{j} g(x), \,\,\mbox{and}\,\,\, g(x) = \mu f(x_1,x_2,\mu x_3),
$$
where $R_{i}$ and $R_{j}$ are the usual 3D Riesz transforms. For any cube $Q\subset \bbR^3,$ we define a $\mu$-normalized cube $Q_\mu$ by $Q_{\mu} = \{ y \in \bbR^3 : (y_1,y_2, \frac{y_3}{\mu}) \in Q \}.$ Then we can compute
\begin{align*}
\frac 1{|Q|} \int_Q | g(x) - g_Q| \mathrm{d}x = \frac 1{|Q|} \int_Q | \mu f(x_1,x_2,\mu x_3) - \mu f_{Q_\mu}| \mathrm{d}x = \frac \mu{|Q_\mu|} \int_{Q_\mu} | f(x) - f_{Q_\mu} | \mathrm{d}x,
\end{align*}
which yields, combined with the boundedness of Riesz transforms on $BMO$,
\begin{align*}
\| S f \|_{BMO} &= \| (S f) (x_1,x_2,\mu x_3) \|_{BMO} = \mu^{-1} \| R_{i}R_{j} g(x) \|_{BMO} \leq C \mu^{-1} \|  g(x) \|_{BMO} = C \| f \|_{BMO}.
\end{align*}
The proof is finished.
\end{proof}
\subsection{Continuity in $\mu$ for the QG equations}
The equivalence between the QG equations and the limit system \eqref{LS} has been shown in Section~\ref{GE_QG}; here, we consider \eqref{LS} instead of the QG system. We need to do some difference estimates for the two different rotation-stratification ratios $\mu>0$ and $\nu>0$. More specifically, we show that the limit system \eqref{LS}, and so \eqref{QG}, varies continuously in rotation-stratification ratio with respect to the $H^{m-1}$ norm for $m>\frac{5}{2}$, which is stated in Proposition \ref{conv_prop}. Then, by a simple addition/subtraction trick and a triangle inequality, we can finish the proof of Corollary~\ref{cor_conv}. We begin by the following difference lemma.


 
 
\begin{lemma}
Let $\mu, \nu \in (0,\infty)$ and $\mathbf{f} = (f_1,f_2,f_3,f_4) \in H^k$ for some $k \in \mathbb{N} \cup \{ 0 \}$. Then we have 
\begin{equation}\label{conv_est}
\| (P_\mu - P_\nu) \mathbf{f} \|_{H^k} \leq \frac {3|\mu - \nu|}{\sqrt{\mu \nu}} \| \mathbf{f} \|_{H^k}.
\end{equation}
\end{lemma}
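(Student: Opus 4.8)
The plan is to realize $P_\mu-P_\nu$ as a Fourier multiplier and to bound its matrix symbol pointwise in frequency. By \eqref{Pmu} and \eqref{def_proj}, $P_\mu$ acts in Fourier variables through the matrix $M_\mu(\xi)$ given by $M_\mu(\xi)\mathbf{w}:=\langle\mathbf{w},b_\mu(\xi)\rangle_{\bbC^4}\,b_\mu(\xi)$, which is the orthogonal projection of $\bbC^4$ onto the line spanned by the \emph{real} unit vector $b_\mu(\xi)=\xi_\mu/|\xi_\mu|$, $\xi_\mu=(-\xi_2,\xi_1,0,\mu\xi_3)^{\top}$. Since $P_\mu-P_\nu$ is a Fourier multiplier, it commutes with the operator $(1-\Delta)^{k/2}$ defining the $H^k$ norm, so the Plancherel theorem reduces \eqref{conv_est} to the uniform operator-norm bound
\[
\sup_{\xi\in\bbR^3}\;\|M_\mu(\xi)-M_\nu(\xi)\|_{\bbC^4\to\bbC^4}\;\leq\;\frac{3\,|\mu-\nu|}{\sqrt{\mu\nu}},
\]
i.e.\ to proving $|(M_\mu(\xi)-M_\nu(\xi))\mathbf{w}|\leq\tfrac{3|\mu-\nu|}{\sqrt{\mu\nu}}\,|\mathbf{w}|$ for a.e.\ $\xi$ and all $\mathbf{w}\in\bbC^4$.

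To handle the symbol I would add and subtract $\langle\mathbf{w},b_\mu(\xi)\rangle\,b_\nu(\xi)$, which produces the identity
\[
(M_\mu(\xi)-M_\nu(\xi))\mathbf{w}=\langle\mathbf{w},b_\mu(\xi)\rangle_{\bbC^4}\,(b_\mu(\xi)-b_\nu(\xi))+\langle\mathbf{w},b_\mu(\xi)-b_\nu(\xi)\rangle_{\bbC^4}\,b_\nu(\xi).
\]
Combined with the Cauchy--Schwarz inequality and $|b_\mu(\xi)|=|b_\nu(\xi)|=1$, this gives $|(M_\mu(\xi)-M_\nu(\xi))\mathbf{w}|\leq 2\,|b_\mu(\xi)-b_\nu(\xi)|\,|\mathbf{w}|$. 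Hence the lemma is reduced to the elementary inequality $|b_\mu(\xi)-b_\nu(\xi)|\leq\tfrac{3}{2}\cdot\tfrac{|\mu-\nu|}{\sqrt{\mu\nu}}$, and in fact something stronger will fall out.

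For that inequality I would use that $b_\mu,b_\nu$ are real unit vectors, so $|b_\mu-b_\nu|^2=2\,(1-\langle b_\mu,b_\nu\rangle)$; since $\langle\xi_\mu,\xi_\nu\rangle=|\xi_H|^2+\mu\nu\xi_3^2\geq 0$ we have $0\leq\langle b_\mu,b_\nu\rangle\leq 1$ and therefore $1-\langle b_\mu,b_\nu\rangle\leq 1-\langle b_\mu,b_\nu\rangle^2$. A direct expansion (a special case of Lagrange's identity) gives
\[
|\xi_\mu|^2|\xi_\nu|^2-\langle\xi_\mu,\xi_\nu\rangle^2=(|\xi_H|^2+\mu^2\xi_3^2)(|\xi_H|^2+\nu^2\xi_3^2)-(|\xi_H|^2+\mu\nu\xi_3^2)^2=|\xi_H|^2\,\xi_3^2\,(\mu-\nu)^2,
\]
while the arithmetic--geometric mean inequality, applied to $|\xi_\mu|^2=|\xi_H|^2+\mu^2\xi_3^2\geq 2\mu|\xi_H|\,|\xi_3|$ and to the analogue for $\nu$, yields $|\xi_\mu|^2|\xi_\nu|^2\geq 4\mu\nu\,|\xi_H|^2\xi_3^2$. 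Combining the two, $1-\langle b_\mu,b_\nu\rangle^2\leq\tfrac{(\mu-\nu)^2}{4\mu\nu}$, hence $|b_\mu-b_\nu|^2\leq\tfrac{(\mu-\nu)^2}{2\mu\nu}$, so that $|(M_\mu(\xi)-M_\nu(\xi))\mathbf{w}|\leq\sqrt{2}\,\tfrac{|\mu-\nu|}{\sqrt{\mu\nu}}\,|\mathbf{w}|\leq\tfrac{3|\mu-\nu|}{\sqrt{\mu\nu}}\,|\mathbf{w}|$. Integrating this pointwise bound against the Sobolev weight then gives \eqref{conv_est}.

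I do not expect a genuine obstacle here; the only mild care points are (i) the degenerate frequencies with $\xi_H=0$ or $\xi_3=0$, where $\xi_\mu$ and $\xi_\nu$ are parallel and $M_\mu(\xi)=M_\nu(\xi)$ — these form a Lebesgue-null set and contribute nothing — and (ii) verifying that replacing the real vectors $b_\mu,b_\nu$ by complex arguments $\mathbf{w}\in\bbC^4$ costs nothing, which is immediate because the Cauchy--Schwarz step above is insensitive to the complex structure.
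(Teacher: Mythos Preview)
Your argument is correct and in fact yields the sharper constant $\sqrt{2}$ in place of $3$. It differs from the paper's proof, which proceeds by brute force: the authors write out the $4\times 4$ symbol matrix of $P_\mu$ explicitly (your $M_\mu(\xi)=\xi_\mu\xi_\mu^{\top}/|\xi_\mu|^2$, cf.\ \eqref{muf_est}), subtract, factor out $(\mu-\nu)/(|\xi_\mu|^2|\xi_\nu|^2)$, and then bound each of the remaining entries by $|\xi_\mu|^2|\xi_\nu|^2/\sqrt{\mu\nu}$ via elementary inequalities; the factor $3$ comes from the fact that each row of the difference matrix has at most three nonzero entries. Your route is more conceptual: instead of expanding the matrix you exploit that $M_\mu$ is the rank-one orthogonal projection onto the real unit vector $b_\mu$, reduce the operator-norm bound to $2\,|b_\mu-b_\nu|$, and control the latter via the Lagrange identity plus AM--GM. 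This is cleaner, avoids entrywise bookkeeping, and would transfer immediately to any family of rank-one projections parametrized smoothly by a scalar; the paper's computation, by contrast, is tied to the specific form of $\xi_\mu$ but has the virtue of making the dependence on the coordinates of $\xi$ fully explicit.
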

\begin{proof}
Note that
\begin{equation}\label{muf_est}
\mathscr{F} P_\mu \mathbf{f} = \frac 1{|\xi_\mu|^2} \begin{pmatrix}
\xi_2^2 & -\xi_1 \xi_2 & 0 & -\mu \xi_2 \xi_3 \\
-\xi_1 \xi_2 & \xi_1^2 & 0 & \mu \xi_1 \xi_3 \\
0 & 0 & 0 & 0 \\
-\mu \xi_2 \xi_3 & \mu \xi_1 \xi_3 & 0 & \mu^2 \xi_3^2
\end{pmatrix} \mathscr{F} \mathbf{f}.
\end{equation}
Thus we have
$$
\mathscr{F} (P_\nu - P_\mu) \mathbf{f} = \frac {\mu - \nu}{|\xi_\mu|^2|\xi_\nu|^2} 
\begin{pmatrix}
(\mu + \nu) \xi_2^2 \xi_3^2 & -(\mu + \nu) \xi_1 \xi_2 \xi_3^2 & 0 & \xi_2 \xi_3 (|\xi_H|^2 - \mu \nu \xi_3^2) \\
-(\mu + \nu) \xi_1 \xi_2 \xi_3^2 & (\mu + \nu) \xi_1^2 \xi_3^2 & 0 & - \xi_1 \xi_3 (|\xi_H|^2 - \mu \nu \xi_3^2) \\
0 & 0 & 0 & 0 \\
\xi_2 \xi_3 (|\xi_H|^2 - \mu \nu \xi_3^2) & -\xi_1 \xi_3 (|\xi_H|^2 - \mu \nu \xi_3^2) & 0 & -(\mu + \nu)|\xi_H|^2 \xi_3^2
\end{pmatrix} \mathscr{F} \mathbf{f}.
$$
We compute as 
$$
|\sqrt{\mu \nu}(\mu + \nu) |\xi_H|^2 \xi_3^2| \leq (\mu^2 + \nu^2)|\xi_H|^2 \xi_3^2 \leq |\xi_\mu|^2|\xi_\nu|^2
$$
and see that
$$
|\sqrt{\mu \nu}\xi_1 \xi_3 (|\xi_H|^2 - \mu \nu \xi_3^2)| \leq \sqrt{\mu \nu} |\xi_H|^3 |\xi_3| + \sqrt{\mu \nu}^3 |\xi_H| |\xi_3|^3 \leq |\xi_H|^4 + \mu^2 \nu^2 |\xi_3|^4 \leq |\xi_\mu|^2|\xi_\nu|^2.
$$
By the Plancherel theorem, we obtain \eqref{conv_est}. This completes the proof.
\end{proof}



\begin{proposition}\label{conv_prop}
Let $\nu,\mu \in (0,\infty)$ with $|\nu - \mu| \leq \frac{\nu}{2}$ and $u_0 \in H^m(\bbR^3)$ for $m > \frac 52$. Let $u^\nu$ and $u^\mu$ be the corresponding global-in-time solutions to \eqref{LS}. Then, there exists a constant $C = C(\nu, m,T, \| u_0 \|_{H^m}) > 0$ such that
\begin{equation}\label{lim_conv_finite}
\sup_{t \in [0,T]} \| u^\nu - u^{\mu} \|_{H^{m-1}} \leq C|\nu - \mu|.
\end{equation}
\end{proposition}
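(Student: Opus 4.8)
The plan is to control the difference $w:=u^\nu-u^\mu$ in $H^{m-1}$ by a Gr\"onwall argument, treating the mismatch of the two rotation-stratification ratios as a forcing of size $O(|\nu-\mu|)$ that is tamed by the difference estimate \eqref{conv_est} and the uniform bound \eqref{upp_u}. Throughout, the hypothesis $|\nu-\mu|\le\nu/2$ forces $\mu\in[\nu/2,3\nu/2]$, so $(\mu\nu)^{-1/2}\le\sqrt2/\nu$ and the constant $C_L$ of \eqref{upp_u} for either ratio is dominated by a constant depending only on $\nu,m,T,\|u_0\|_{H^m}$; all such quantities will be absorbed into a single $C=C(\nu,m,T,\|u_0\|_{H^m})$. (The bound \eqref{conv_est} holds for every real exponent by the same Plancherel computation, and $u^\mu,u^\nu$ are genuinely $H^m$-valued by Proposition~\ref{uniform}, which matters below.) Subtracting the two copies of \eqref{LS} and using $P_\nu u^\nu=u^\nu$ to split the nonlinear difference yields
$$
\partial_t w + P_\mu\big[(u^\mu\cdot\widetilde{\nabla})w\big] = -(P_\nu-P_\mu)\big[(u^\nu\cdot\widetilde{\nabla})u^\nu\big] - P_\mu\big[(w\cdot\widetilde{\nabla})u^\nu\big], \qquad w(0)=(P_\nu-P_\mu)u_0,
$$
and since $H^{m-1}$ is a Banach algebra for $m>5/2$, the right-hand side is bounded in $H^{m-1}$ by $C|\nu-\mu|\,C_L^2+C\,C_L\|w\|_{H^{m-1}}$, while $\|w(0)\|_{H^{m-1}}\le C|\nu-\mu|\|u_0\|_{H^m}$ by \eqref{conv_est}.

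Performing the $H^{m-1}$ energy estimate, $\tfrac12\frac{\ud}{\ud t}\|w\|_{H^{m-1}}^2=-\langle P_\mu[(u^\mu\cdot\widetilde{\nabla})w],w\rangle_{H^{m-1}}+\langle G,w\rangle_{H^{m-1}}$ with $G$ the right-hand side above, the only delicate term is $\langle P_\mu[(u^\mu\cdot\widetilde{\nabla})w],w\rangle_{H^{m-1}}$: the usual transport antisymmetry is obstructed by the projector, and $(u^\mu\cdot\widetilde{\nabla})w$ formally carries one derivative of $w$ more than we control. The key identity is that, because $P_\mu u^\mu=u^\mu$ and $P_\nu u^\nu=u^\nu$,
$$
P_\mu w = P_\mu u^\nu - u^\mu = w - (P_\nu-P_\mu)u^\nu =: w-\rho,
$$
where, crucially, $\rho$ is \emph{one derivative smoother and small}: by \eqref{conv_est} in $H^m$, $\|\rho\|_{H^m}\le C|\nu-\mu|\,C_L$. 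Using that $P_\mu$ is the $L^2(\bbR^3;\bbC^4)$-orthogonal projection onto the line spanned by $b_\mu(\xi)$, hence self-adjoint and commuting with all Fourier multipliers, I would rewrite the delicate term as $\langle(u^\mu\cdot\widetilde{\nabla})w,w\rangle_{H^{m-1}}-\langle(u^\mu\cdot\widetilde{\nabla})w,\rho\rangle_{H^{m-1}}$.

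For the first of these, $\widetilde{\nabla}\cdot u^\mu=\nabla\cdot v^\mu=0$ (the spatial part of $b_\mu(\xi)$ is orthogonal to $\xi$), so after a Kato--Ponce type commutator estimate that uses the extra $H^m$-regularity of $u^\mu$ one gets a bound $C\,C_L\|w\|_{H^{m-1}}^2$. For the second, redistributing Bessel potentials as $\langle\Lambda^{m-1}F,\Lambda^{m-1}\rho\rangle_{L^2}=\langle\Lambda^{m-2}F,\Lambda^{m}\rho\rangle_{L^2}$ with $F=(u^\mu\cdot\widetilde{\nabla})w$ puts only $m-2$ derivatives on $F$ and $m$ on $\rho$; since $\|(u^\mu\cdot\widetilde{\nabla})w\|_{H^{m-2}}\le C\,C_L\|w\|_{H^{m-1}}$ uses only the available regularity of $w$, this term is $\le C|\nu-\mu|\,C_L^2\|w\|_{H^{m-1}}$.

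Collecting the estimates gives $\frac{\ud}{\ud t}\|w\|_{H^{m-1}}\le C\,C_L\|w\|_{H^{m-1}}+C|\nu-\mu|\,C_L^2$, and Gr\"onwall's inequality together with $\|w(0)\|_{H^{m-1}}\le C|\nu-\mu|\|u_0\|_{H^m}$ yields $\sup_{[0,T]}\|w\|_{H^{m-1}}\le C(\nu,m,T,\|u_0\|_{H^m})|\nu-\mu|$, which is \eqref{lim_conv_finite}. I expect the main obstacle to be exactly the delicate term above — pushing the transport energy estimate through despite the projector $P_\mu$ wrecking both the antisymmetry and the derivative count — which is precisely what the identity $P_\mu w=w-(P_\nu-P_\mu)u^\nu$ plus the spare derivative on $u^\nu$ resolve.
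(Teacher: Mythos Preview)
Your proposal is correct and follows essentially the same approach as the paper: the identical splitting of the difference equation, the same key identity $P_\mu w = w - (P_\nu - P_\mu)u^\nu$ to handle the projector in the transport term, and the same redistribution of one derivative onto $(P_\nu-P_\mu)u^\nu$ (you do it via $\Lambda^{m-1}\Lambda^{m-1}=\Lambda^{m-2}\Lambda^{m}$, the paper via integration by parts on integer $\partial^\alpha$), followed by Gr\"onwall. There is no substantive difference.
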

\begin{proof}
Recalling the limit system \eqref{LS}, we can have
$$
\partial_t (u^\nu - u^\mu) + (P_\nu - P_\mu)[(u^\nu \cdot \widetilde{\nabla})u^\nu] + P_\mu [((u^\nu - u^\mu) \cdot \widetilde{\nabla})u^\nu] + P_\mu [(u^\mu \cdot \widetilde{\nabla})(u^\nu - u^\mu)] = 0.
$$
Note by \eqref{conv_est} that $\frac 12 \frac{\mathrm{d}}{\mathrm{d}t} \| u^\nu - u^\mu \|_{\dot{H}^{m-1}}^2$ is bounded above by
\begin{align*}
\frac {C|\nu - \mu|}{\sqrt{\mu \nu}} \| u^\nu \|_{H^m}^2 \| u^\nu - u^\mu \|_{H^{m-1}} &+ C \| u^\nu \|_{H^m} \| u^\nu - u^\mu \|_{H^{m-1}}^2 \\
&- \sum_{|\alpha|=m-1} \int \partial^{\alpha} (u^\mu \cdot \widetilde{\nabla})(u^\nu - u^\mu) \cdot \partial^{\alpha}  P_\mu(u^\nu - u^\mu) \mathrm{d}x.
\end{align*}
Using \eqref{conv_est} again, we have
\begin{equation*}
\begin{aligned}
&-\sum_{|\alpha|=m-1}\int \partial^{\alpha} (u^\mu \cdot \widetilde{\nabla})(u^\nu - u^\mu) \cdot \partial^{\alpha}  P_\mu(u^\nu - u^\mu) \mathrm{d}x \\
&\hphantom{\qquad\qquad}= -\sum_{|\alpha|=m-1} \int \partial^{\alpha} (u^\mu \cdot \widetilde{\nabla})(u^\nu - u^\mu) \cdot \partial^{\alpha} ((u^\nu - u^\mu) - ( P_\nu- P_\mu)u^\nu) \mathrm{d}x \\
&\hphantom{\qquad\qquad}\leq C \| u^\mu \|_{H^m} \| u^\nu - u^\mu \|_{H^{m-1}}^2 + \frac {C|\nu - \mu|}{\sqrt{\mu \nu}} \| u^\nu \|_{H^m} \| u^\mu \|_{H^m} \| u^\nu - u^\mu \|_{H^{m-1}},
\end{aligned}
\end{equation*}
and so the assumption $|\nu - \mu| \leq \nu/2$ yields
\begin{align*}
\frac{\mathrm{d}}{\mathrm{d}t} \| u^\nu - u^\mu \|_{H^{m-1}}\leq C C_L^2 \nu^{-1}|\nu - \mu| + C C_L \| u^\nu - u^\mu \|_{H^{m-1}},
\end{align*}
where the constant $C_{L}$ is defined in \eqref{upp_u}. Letting $y(t) = \| u^\nu - u^\mu \|_{H^{m-1}}$, we get
$$
\frac{\mathrm{d}}{\mathrm{d}t} y(t) \leq C C_L^2 \nu^{-1}|\nu - \mu| + C C_L y(t).
$$
The Gr\"{o}nwall inequality implies
\begin{align*}
y(t) &\leq \left(y(0) + C C_L^2 \nu^{-1} |\nu - \mu|t \right) e^{C C_L t}.
\end{align*}
Thus, together with $y(0) = \| (P_{\mu} - P_{\mu})u_0 \|_{H^{m-1}} \leq \frac{3|\mu-\nu|}{\sqrt{\mu\nu}} \| u_0 \|_{H^{m-1}}$, we obtain \eqref{lim_conv_finite}.
This completes the proof.
\end{proof}

\subsection{ Convergence for $\mu\neq 1$}

In the following theorem, we establish the convergence of \eqref{eq_main} to the QG equations when the rotation-stratification ratio $\mu\neq 1$ is fixed. This result is analogous to the one that was obtained in \cite{Tak1} for the one-scale singular limit $N\to\infty$ without consideration of Coriolis force. 

\begin{theorem}\label{thm1}
Let $m\in \mathbb{N}$ satisfy $m\geq 7$ and let  $4\leq q < \infty$. Fix any positive Burger number $\mu\neq 1$. For every $T > 0$ and for any initial data $u_0 \in H^m(\mathbb{R}^3)$ with $\widetilde{\nabla} \cdot u_0 = 0$, there exists a constant $R=R(m,q,\mu,T,\|u_0\|_{H^m})>0$ such that if $\sqrt{\Omega^2+N^2} > R$, then \eqref{eq_main} possesses a unique classical solution 
$$u \in C([0,T];H^m(\mathbb{R}^3)) \cap C^1([0,T];H^{m-1}(\mathbb{R}^3)).$$ Moreover, there exists a constant $C_\mu=C(m,q,\mu,T,\| u_0 \|_{H^m}) > 0$  such that
\begin{equation}\label{conv_1}
\| u - \nabla_{\mu} \psi^{\mu} \|_{L^q(0,T;W^{1,\infty})}^q \leq \frac {C_\mu} {\sqrt{\Omega^2+N^2}}
\end{equation}
as long as $\sqrt{\Omega^2+N^2} > R$. Here $\nb_\mu \psi^\mu$ is defined according to the notions in \eqref{QG}-\eqref{QG_def_initial}.
\end{theorem}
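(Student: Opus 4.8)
The plan is to run the same three-part decomposition used in the proofs of Theorems~\ref{thm3} and~\ref{rmk_div}, except that now the dispersion which is absent at $\mu=1$ becomes available and drives the convergence. Let $u^\mu=\nabla_\mu\psi^\mu$ be the global QG solution furnished by Proposition~\ref{uniform}, let the pair $u^\pm$ solve the modified linear system \eqref{MLS}, namely
\begin{equation*}
u^\pm(t)=e^{\pm itNp_\mu(D)}P_\pm u_0-\int_0^t e^{\pm i(t-\tau)Np_\mu(D)}P_\pm\big(u^\mu(\tau)\cdot\widetilde{\nabla}\big)u^\mu(\tau)\,\mathrm{d}\tau,
\end{equation*}
and set $\mathcal{E}:=u-u^\mu-u^+-u^-$ so that $\mathcal{E}(0)=0$. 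Since $u-\nabla_\mu\psi^\mu=u^++u^-+\mathcal{E}$, the bound \eqref{conv_1} follows once each of the three pieces — the homogeneous propagations $e^{\pm itNp_\mu(D)}P_\pm u_0$, the Duhamel remainders in $u^\pm$, and $\mathcal{E}$ — is shown to decay like $(\Omega^2+N^2)^{-1/(2q)}$ in $L^q(0,T;W^{1,\infty})$; the extension of the local solution to $[0,T]$ will come out of the same bootstrap.

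The decisive ingredient is a Strichartz estimate for the propagator $e^{\pm itNp_\mu(D)}$ with $p_\mu(\xi)=|\xi_\mu|/|\xi|$. This is precisely where $\mu\neq1$ is used: since $|\xi_\mu|^2=|\xi_H|^2+\mu^2\xi_3^2$ and $|\xi|^2=|\xi_H|^2+\xi_3^2$, the symbol $p_\mu$ is identically $1$ when $\mu=1$, so the propagator is a pure scalar phase with no decay, whereas for $\mu\neq1$ it is a genuine non-constant, $0$-homogeneous function whose level surfaces carry non-vanishing curvature away from the degenerate cone directions. A stationary-phase analysis of the oscillatory kernel, in the spirit of \cite{MS,Tak1,IT13,IT15}, together with Littlewood--Paley summation (the derivative loss absorbed by $m\geq7$), then yields, for admissible $q\geq4$,
\begin{equation*}
\big\|e^{\pm itNp_\mu(D)}P_\pm f\big\|_{L^q(0,T;W^{1,\infty})}\leq \frac{C_\mu}{(\Omega^2+N^2)^{1/(2q)}}\,\|f\|_{H^m}.
\end{equation*}
Applying this with $f=P_\pm u_0$ disposes of the homogeneous part of $u^\pm$. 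The Duhamel remainder in $u^\pm$ needs no dispersion at all: integrating by parts in $\tau$ and using the limit equations $\partial_t u^\mu=-P_\mu(u^\mu\cdot\widetilde{\nabla})u^\mu$ to trade time derivatives for the uniformly bounded spatial nonlinearity, exactly as in Step~1 of the proof of Theorem~\ref{rmk_div}, produces an $O(1/N)$ bound in $L^\infty(0,T;H^{s+1})\hookrightarrow L^q(0,T;W^{1,\infty})$ via the $N$-independent control \eqref{upp_u}.

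It remains to estimate $\mathcal{E}$. I would run the $H^s$ energy estimate for $s\in[3,m-3]$ in the form \eqref{Hs_E_est}: the transport term cancels, the remaining terms are bounded by the $H^{s-1}$ Banach-algebra property and the boundedness of $P_\mu,P_\pm$ on $H^{s+1}$, and each source term $(u^j\cdot\widetilde{\nabla})u^k$ with $(j,k)\neq(\mu,\mu)$ carries at least one factor of $u^\pm$. A short-time Gr\"onwall argument then yields $\sup_t\|\mathcal{E}\|_{H^s}\lesssim_\mu N^{-1}$, and a continuity/bootstrap argument — the a priori bounds precluding finite-time blow-up — upgrades this to all of $[0,T]$ once $\sqrt{\Omega^2+N^2}>R$, which simultaneously furnishes the asserted classical solution on $[0,T]$. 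The embedding $H^s\hookrightarrow W^{1,\infty}$ converts this into the needed $L^q(0,T;W^{1,\infty})$ bound for $\mathcal{E}$, and summing the three contributions (each bounded, after raising to the $q$-th power, by $C_\mu(\Omega^2+N^2)^{-1/2}$) gives \eqref{conv_1}. The main obstacle is the Strichartz estimate with \emph{explicit, non-uniform} dependence of the constant on $\mu$: one must quantify how the curvature of the level surfaces $\{p_\mu=c\}$ degenerates as $\mu\to1$, so that $C_\mu$ is finite for each fixed $\mu\neq1$ yet is allowed to blow up as $\mu\to1$ — this is exactly the mechanism exploited in Corollary~\ref{cor_div}. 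A subsidiary point is keeping the $\mathcal{E}$-bootstrap uniform in $N$ while extending the existence time to the prescribed $T$ rather than the $N$-independent local time $T_\ast$, which is where the room afforded by $m\geq7$ and the largeness of $R$ enter.
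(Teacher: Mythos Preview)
Your overall architecture---the decomposition $u-u^\mu=u^++u^-+\mathcal{E}$, Strichartz for the homogeneous propagation, and a bootstrap to reach the prescribed $T$---is exactly the route the paper indicates (it defers to \cite{Tak1}, with the Strichartz ingredients recorded in Sections~\ref{sec_Strichartz}--\ref{sec_modified}). One minor deviation: the paper, following \cite{Tak1}, applies Strichartz to the \emph{full} $u^\pm$ at once via \eqref{dispersive}, rather than splitting off the Duhamel piece and treating it by time-integration by parts; your treatment of the Duhamel remainder is a legitimate alternative.

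There is, however, a genuine gap in your $\mathcal{E}$-estimate. You invoke the pure $H^s$ inequality \eqref{Hs_E_est} and assert that Gr\"onwall yields $\sup_t\|\mathcal{E}\|_{H^s}\lesssim N^{-1}$ because ``each source term carries at least one factor of $u^\pm$.'' But in \eqref{Hs_E_est} that factor appears only through $\|u^\pm\|_{H^{s+1}}$, and the propagator $e^{\pm itNp_\mu(D)}$ is \emph{unitary} on $H^{s+1}$: hence $\|u^\pm(t)\|_{H^{s+1}}\geq\|P_\pm u_0\|_{H^{s+1}}-O(N^{-1})$ is $O(1)$ for general data, the forcing in \eqref{Hs_E_est} is $O(1)$, and Gr\"onwall gives only $\|\mathcal{E}\|_{H^s}=O(t)$ with no $N$-decay. (This is exactly why the paper's $H^s$ convergence Theorem~\ref{thm2} requires the well-prepared hypothesis $P_\pm u_0=0$.) The fix, and what \cite{Tak1} actually does, is to run the $H^s$ energy estimate in a form that places one factor in $L^\infty$ via Kato--Ponce, schematically
\[
\big|\langle (u^j\cdot\widetilde{\nabla})u^k,\mathcal{E}\rangle_{H^s}\big|\lesssim\big(\|\nabla u^j\|_{L^\infty}\|u^k\|_{H^s}+\|u^j\|_{H^s}\|\nabla u^k\|_{L^\infty}\big)\|\mathcal{E}\|_{H^s},
\]
so that after integrating in time and applying H\"older, the dispersive smallness $\|\nabla u^\pm\|_{L^q(0,T;L^\infty)}\lesssim N^{-1/q}$ from \eqref{dispersive} enters and produces $\sup_t\|\mathcal{E}\|_{H^s}\lesssim N^{-1/q}$. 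This same $L^q_tL^\infty_x$ control of $\nabla u$ is also what drives the extension of the local solution to $[0,T]$ via the Beale--Kato--Majda criterion; the bootstrap you sketch cannot close with \eqref{Hs_E_est} alone.
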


\begin{remark}\label{const}
 The constant $C_\mu$ appears in using the Littman theorem \cite{Littman} to prove the dispersive estimates. A careful look at Chapter VIII of \cite{Stein} tells us that, in one dimension, the size of $C_\mu$ blows up as $\mu\to1$. The exact size of $C_\mu$ in three dimensions is not known, but we are still able to obtain a lower bound for the blow-up rate, see Corollary~\ref{cor_div}. Geometrically, the Gaussian curvature of the surface $\{(\xi,p_{\mu}(\xi))\in\bbR^3\times\bbR:1/4\leq|\xi|\leq 4\}$ is zero at $\mu=1$ so that there is no proper ``oscillation" one can exploit to obtain the required decay.
\end{remark}

\begin{remark}
    For any fixed $\nu > 0$ with $\nu \neq 1$, one can see by a direct computation that $p_{\nu}(\xi)$ is $C^{\infty}$ on the set $D:=\{\xi \in \bbR^3 ; 1/4 \leq |\xi| \leq 4 \}$. Moreover, $\| p_{\nu} - p_{\mu} \|_{C^N(D)} \to 0$ as $\mu \to \nu$ for all $N \in \bbN \cup \{0\}$. Thanks to \cite[Lemma 3.3]{LT17}, there exist a positive constant $\varepsilon=\varepsilon(\nu) < \nu$ such that $C_\mu$ and $R$ are uniformly bounded above by some constants $\bar{C}$ and $\bar{R}$, respectively for all $\mu\in (\nu-\varepsilon,\nu+\varepsilon).$ This means that the convergence rate in \eqref{conv_1} can be made uniform near any fixed positive $\mu$ other than one.
\end{remark}


\begin{proof}[Proof of Theorem~\ref{thm1}]
This can be proved analogously to the proof of the main theorem in \cite{Tak1}. The required corresponding key ingredients are stated in Section~\ref{sec_Strichartz}-\ref{sec_modified}.    
\end{proof}

  The rotation-stratification ratio $\mu$ does not have to be entirely fixed during the convergence process. As long as $\mu$ converges to some $\nu\in(0,1)\cup(1,\infty)$ as $N\to\infty$ and $\Omega\to\infty$, there is some flexibility allowed for $\mu$. See below. 

\begin{corollary}\label{cor_conv} Assume the hypotheses of Theorem~\ref{thm1} on $q$, $m$, and $u_0$. Fix any $T>0$ and any $\nu\in(0,1)\cup(1,\infty)$. Let $\{(\Omega_k,N_k)\}_{k \in \mathbb{N}}$ be a sequence of pairs of positive numbers satisfying
\begin{equation*}
    N_k\to\infty,\quad \Omega_k \to \infty, \quad \mu_k=\frac{\Omega_k}{N_k} \to \nu \quad as \quad k\to\infty.
\end{equation*} Then the corresponding sequence $\{u^{(k)}\}_{k\in \bbN}$ of solutions to \eqref{eq_main} have the convergence property $$\| u^{(k)} - \nabla_{\nu} \psi^{\nu} \|_{L^q(0,T;W^{1,\infty})}^q \to 0 \qquad \mbox{as} \qquad k \to \infty.$$ Here $\nb_\nu \psi^\nu$ is defined according to the notions in \eqref{QG}-\eqref{QG_def_initial} with $\mu=\nu$.
\end{corollary}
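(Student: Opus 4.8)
The plan is to interpolate between the Boussinesq flow $u^{(k)}$ and the target QG profile $\nabla_\nu\psi^\nu$ through the nearby QG profile $\nabla_{\mu_k}\psi^{\mu_k}$, bounding the first gap by the quantitative convergence rate of Theorem~\ref{thm1} and the second gap by the $\mu$-continuity of the QG dynamics from Proposition~\ref{conv_prop}. Since $\|\cdot\|^q_{L^q(0,T;W^{1,\infty})}\to 0$ is equivalent to $\|\cdot\|_{L^q(0,T;W^{1,\infty})}\to 0$, it suffices to show
\[
\|u^{(k)}-\nabla_\nu\psi^\nu\|_{L^q(0,T;W^{1,\infty})} \le \|u^{(k)}-\nabla_{\mu_k}\psi^{\mu_k}\|_{L^q(0,T;W^{1,\infty})} + \|\nabla_{\mu_k}\psi^{\mu_k}-\nabla_\nu\psi^\nu\|_{L^q(0,T;W^{1,\infty})} \longrightarrow 0.
\]

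First I would check that Theorem~\ref{thm1} applies uniformly along the sequence. Because $\mu_k\to\nu$ with $\nu\neq 1$, the remark following Theorem~\ref{thm1} --- which rests on the $C^N$-closeness of $p_{\mu_k}$ to $p_\nu$ on the annulus $\{1/4\le|\xi|\le 4\}$ together with \cite[Lemma 3.3]{LT17} --- supplies $\varepsilon=\varepsilon(\nu)\in(0,\nu)$ and constants $\bar C,\bar R>0$ with $C_{\mu_k}\le\bar C$ and $R(m,q,\mu_k,T,\|u_0\|_{H^m})\le\bar R$ whenever $\mu_k\in(\nu-\varepsilon,\nu+\varepsilon)$. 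For all large $k$ the ratio $\mu_k$ lies in this interval, and since $\Omega_k,N_k\to\infty$ we have $\sqrt{\Omega_k^2+N_k^2}>\bar R$; hence $u^{(k)}$ exists on $[0,T]$ in the class $C([0,T];H^m)\cap C^1([0,T];H^{m-1})$ and, by \eqref{conv_1},
\[
\|u^{(k)}-\nabla_{\mu_k}\psi^{\mu_k}\|_{L^q(0,T;W^{1,\infty})}^q \le \frac{C_{\mu_k}}{\sqrt{\Omega_k^2+N_k^2}} \le \frac{\bar C}{\sqrt{\Omega_k^2+N_k^2}} \longrightarrow 0.
\]

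For the second term I would use the identity $\nabla_\mu\psi^\mu=u^\mu$ from \eqref{u_phi}, so that $\nabla_{\mu_k}\psi^{\mu_k}-\nabla_\nu\psi^\nu=u^{\mu_k}-u^\nu$ is the difference of two solutions of the limit system \eqref{LS} issued from $P_{\mu_k}u_0$ and $P_\nu u_0$. For $k$ large we have $|\nu-\mu_k|\le\nu/2$, so Proposition~\ref{conv_prop} gives $\sup_{t\in[0,T]}\|u^{\mu_k}-u^\nu\|_{H^{m-1}}\le C(\nu,m,T,\|u_0\|_{H^m})\,|\nu-\mu_k|$; combined with the continuous embedding $H^{m-1}\hookrightarrow W^{1,\infty}$ (valid since $m\ge 7$, indeed already $H^3\hookrightarrow W^{1,\infty}$) and integration in $t$, this yields $\|\nabla_{\mu_k}\psi^{\mu_k}-\nabla_\nu\psi^\nu\|_{L^q(0,T;W^{1,\infty})}\le CT^{1/q}|\nu-\mu_k|\to 0$. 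Putting the two estimates together proves the corollary.

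The one genuinely delicate ingredient is the uniform control of the harmonic-analytic constant $C_{\mu_k}$ and of the existence threshold $R$ near $\mu=\nu$: away from $\mu=1$ the Gaussian curvature of the characteristic surface $\{(\xi,p_\mu(\xi)):1/4\le|\xi|\le 4\}$ is bounded away from zero and depends continuously on $\mu$, so the Littman-type dispersive estimate underlying Theorem~\ref{thm1} is stable under small perturbations of $\mu$; this stability is exactly what the cited remark packages, and with it in hand the remaining steps --- the triangle inequality, the Sobolev embedding, and the $\mu$-continuity of the QG flow --- are routine. One could alternatively estimate $u^{\mu_k}-u^\nu$ directly in a Strichartz norm, but routing through $H^{m-1}$ via Proposition~\ref{conv_prop} is the shortest path.
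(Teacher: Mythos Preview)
Your proof is correct and follows essentially the same route as the paper: split by the triangle inequality through $\nabla_{\mu_k}\psi^{\mu_k}$, apply Theorem~\ref{thm1} (with the uniformity of $C_{\mu_k}$ and $R$ near $\nu$ supplied by the remark) to the first piece, and use Proposition~\ref{conv_prop} together with $H^{m-1}\hookrightarrow W^{1,\infty}$ for the second. If anything, you are slightly more explicit than the paper about the uniform-in-$\mu_k$ control of the constants, which the paper relegates to a separate remark.
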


\begin{remark}
The explicit convergence rate is given in \eqref{conv_3}.
\end{remark}

\begin{remark}
    In the proof of Corollary~\ref{cor_conv}, we implicitly used the fact that the constant $R$ in Theorem~\ref{thm1} becomes uniform in $\mu_k$ as long as $\mu_k$ is sufficiently close to $\nu$.
\end{remark}

\begin{proof}[Proof of Corollary \ref{cor_conv}]
We fix $\nu \in (0,1) \cup (1,\infty)$ and $(v_0,\theta_0) \in H^m(\bbR^3)$ for some $m \geq 7$ with $\nabla \cdot v_0 = 0$. Let a sequence $\{ (\Omega_k,N_k) \}$ satisfy $N_k \to \infty$ and $ \mu_k \to \nu$ as $k \to \infty$. For any $T>0$, by Theorem~\ref{thm1}, there is a constant $R>0$ such that the solution $$u^{(k)} \in C([0,T];H^m(\mathbb{R}^3)) \cap C^1(0,T;H^{m-1}(\mathbb{R}^3))$$ and \eqref{conv_1} are obtained, whenever $N_k > R$. From \eqref{u_phi} and Proposition~\ref{conv_prop}, we have $$\sup_{t \in [0,T]} \|\nabla_{\nu} \psi^{\nu} - \nabla_{\mu_k} \psi^{\mu_k} \|_{H^{m-1}} \leq C|\nu - \mu_k|.$$
By the triangle inequality and $H^{m-1}(\bbR^3) \hookrightarrow W^{1,\infty}(\bbR^3)$, it follows that $$\| u^{(k)} - \nabla_{\nu} \psi^{\nu} \|_{L^q(0,T;W^{1,\infty})}^q \leq 2^{q-1}\| u^{(k)} - \nabla_{\mu_k} \psi^{\mu_k} \|_{L^q(0,T;W^{1,\infty})}^q + 2^{q-1}\| \nabla_{\nu} \psi^{\nu} - \nabla_{\mu_k} \psi^{\mu_k} \|_{L^q(0,T;H^{m-1})}^q.$$ Therefore, we deduce 
\begin{equation}\label{conv_3}
    \| u^{(k)} - \nabla_{\nu} \psi^{\nu} \|_{L^q(0,T;W^{1,\infty})}^q \leq \frac{C_{\mu_k}}{N_k} + C|\nu - \mu_k|.
\end{equation}
This completes the proof.
\end{proof}

\subsection{Strichartz Estimates}\label{sec_Strichartz}
We will introduce a space-time Strichartz estimates for the linear propagator. Since $p_{\mu}(\xi)$ is homogeneous of degree zero (Section~\ref{sec_prelim}) it suffices to consider the operators
\begin{equation*}
   \begin{split}
       \mathcal{G}(t)\varphi(x) &:= \int_{\mathbb{R}^3}e^{ix\cdot \xi\pm it N  p_{\mu}(\xi)}\psi(\xi)^2\mathscr{F}\varphi(\xi)\,\ud\xi,\\
       \mathcal{H}(t)\varphi(x) &:=\int_{\mathbb{R}^3}e^{ix\cdot \xi\pm it N  p_{\mu}(\xi)}\psi(\xi)\mathscr{F}\varphi(\xi)\,\ud\xi,
   \end{split}
\end{equation*}
for $(t,x)\in\mathbb{R}\times\mathbb{R}^3$. Here we denote by $\psi$ the smooth real-valued cut-off function such that $\supp \psi \subset\{1/4\leq|\xi|\leq 4\}$ and $\psi(\xi)=1$ on $\{1/2 \leq |\xi|\leq 2\}.$ Then it is known that we have the following lemmas.
\begin{lemma}[\cite{IMT},\cite{LT17}]\label{supdecay}
Let $\mu\neq 1$. There exists a positive constant $C=C(\mu,\psi)>0$ such that
\begin{equation}\label{G_decay}
   \| \mathcal{G}(t) \varphi  \|_{L^{\infty}} \leq C(1+|Nt|)^{-\frac{1}{2}} \|\varphi\|_{L^1}
\end{equation}
for all $t\in\mathbb{R}$ and $\varphi\in L^1(\mathbb{R}^3).$ The same is true for $\mathcal{H}(t).$ The decay rate of \eqref{G_decay} is sharp. The information of $\Omega$ is encoded in the relation $\frac{\Omega}{N}=\mu$.
\end{lemma}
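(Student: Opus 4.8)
The plan is to reduce \eqref{G_decay} to a pointwise decay estimate for the convolution kernel of $\mathcal{G}(t)$ and then prove the latter by stationary phase, using the explicit form
\[
p_\mu(\xi)^2 = \frac{|\xi_\mu|^2}{|\xi|^2} = 1 + (\mu^2-1)\,\frac{\xi_3^2}{|\xi|^2},
\]
which is homogeneous of degree $0$ and, for $\mu\neq1$, smooth on a neighborhood of $\supp\psi\subset\{1/4\le|\xi|\le4\}$. Since $\mathcal{G}(t)$ is a Fourier multiplier, Young's convolution inequality reduces \eqref{G_decay} to the bound $\sup_{x\in\bbR^3}|K_t(x)|\lesssim(1+|Nt|)^{-1/2}$ for the kernel $K_t(x)=\int_{\bbR^3}e^{i(x\cdot\xi\pm Nt\,p_\mu(\xi))}\psi(\xi)^2\,\mathrm{d}\xi$; the two signs are exchanged by complex conjugation, and $\mathcal{H}(t)$ is handled identically with $\psi$ in place of $\psi^2$. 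For $|Nt|\le1$ one trivially has $|K_t(x)|\le\int|\psi|^2=:C_0$, so the whole matter is the regime $\lambda:=|Nt|\ge1$, where one must show $|K_t(x)|\lesssim\lambda^{-1/2}$ \emph{uniformly} in $x$.

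Next I would record the geometric input. Writing $\xi=r\omega$ with $\omega\in S^2$, one has $p_\mu(\xi)=g(\omega_3)$ with $g(s)=\sqrt{1+(\mu^2-1)s^2}$, and a direct computation gives $g''(s)=(\mu^2-1)(1+(\mu^2-1)s^2)^{-3/2}$, so $|g''(s)|\gtrsim_\mu1$ on $[-1,1]$ whenever $\mu\neq1$. Equivalently, the graph $\Sigma_\mu=\{(\xi,p_\mu(\xi)):1/4\le|\xi|\le4\}$ is ruled — being radially constant, $p_\mu$ makes $\Sigma_\mu$ a union of horizontal segments in the $\xi$-directions — so one of its principal curvatures vanishes identically; this is exactly the obstruction that prevents decay faster than $\lambda^{-1/2}$, while for $\mu\neq1$ the other principal curvature never vanishes (the content of $g''\neq0$), and that is what will deliver the exponent exactly $\tfrac12$. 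At $\mu=1$ one has $p_1\equiv1$, the surface $\Sigma_1$ is a horizontal plane, the phase carries no oscillation, and there is no decay whatsoever.

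With these facts in hand I would run the usual oscillatory-integral dichotomy. After rescaling, $K_t(x)=\int e^{i\lambda(y\cdot\xi+p_\mu(\xi))}\psi(\xi)^2\,\mathrm{d}\xi$ with $y=x/\lambda$ (up to sign), and on $\supp\psi$ the gradient $\nabla p_\mu$ is bounded by some $M=M(\mu)$. If $|y|\ge2M$, the phase gradient $|y+\nabla p_\mu(\xi)|\ge|y|/2\ge M$ is bounded below, so repeated integration by parts produces rapid decay in $\lambda$, more than enough. In the complementary range $|y|\le2M$ I would use the rotational symmetry about the $\xi_3$-axis to reduce to $x=(|x_H|,0,x_3)$, integrate out the radial and azimuthal variables first, and then apply the van der Corput lemma in the polar variable $\omega_3$; the curvature bound $|g''|\gtrsim_\mu1$ then yields the gain $\lambda^{-1/2}$. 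For sharpness one tests against $x=0$: near the equator $\xi_3\approx0$ we have $p_\mu(\xi)\approx1+\tfrac{\mu^2-1}{2}\,\xi_3^2/|\xi_H|^2$, a genuine transverse quadratic, so the stationary-phase expansion gives $|K_t(0)|\gtrsim_\mu\lambda^{-1/2}$ with a constant that degenerates as $\mu\to1$, matching Remark~\ref{const}.

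The main obstacle is precisely the uniformity in $x$ in the stationary range $|y|\le2M$: since $\nabla p_\mu$ vanishes along the entire equatorial annulus $\{\xi_3=0\}$ and at the poles, the critical set of the phase is not an isolated point but a moving submanifold of positive dimension, and the amplitude produced by integrating out the radial and azimuthal variables is only controlled after accounting for factors of size $|x|\sim\lambda$, so a naive iteration of integration by parts does not close. The resolution — localizing dyadically in the size of $x\cdot\omega$ and carrying amplitude derivatives through the one-dimensional van der Corput estimates — is the technical core of \cite{IMT,LT17}, to which I would refer for the details; the assertions for $\mathcal{H}(t)$ and for the $-$ sign follow by the identical argument.
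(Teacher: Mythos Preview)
Your approach is essentially the same as the paper's: both identify the key curvature input for $\mu\neq1$ and then defer the technical oscillatory-integral analysis to \cite{IMT,LT17}. The paper records this input as the explicit Hessian determinant $\det Hp_\mu(\xi)=(\mu^2-1)^3|\xi_H|^2\xi_3^4/(|\xi|^9|\xi_\mu|^3)$, whereas you phrase it via the one-variable second derivative $g''(\omega_3)\neq0$; these encode the same geometric fact, and your sketch of the stationary/non-stationary dichotomy simply expands what the cited references carry out.
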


\begin{proof}
    We can compute
\begin{equation*}
   \operatorname{det}Hp_{\mu}(\xi)=(\mu^2-1)^3\frac{|\xi_H|^2\xi_3^4}{|\xi|^9|\xi_{\mu}|^3}
\end{equation*}
where we denote by $Hp_{\mu}(\xi)$ the Hessian matrix for $p_{\mu}(\xi).$ The rest of the proof is exactly the same with the proof of Lemma 3.2 in \cite{IMT} or equivalently Proposition 3.1 in \cite{LT17}.
\end{proof}

\subsection{Modified Linear Dispersive System}\label{sec_modified}
 Here we fix $u_0 \in H^{m+1}(\bbR^3)$ with $m \geq 2$ and introduce the modified linear system 
\begin{equation}\label{MLS}
\left\{
\begin{array}{ll}
\partial_t u^{\pm}\mp iN p_\mu(D)u^{\pm}+P_{\pm}(u^{\mu} \cdot\widetilde{\nabla})u^{\mu}=0,\\
\widetilde{\nabla}\cdot u^{\pm}=0,\\
u^{\pm}(0,x)=P_{\pm}u_0(x),
\end{array}
\right.
\end{equation}
where $u^\mu$ is a solution to \eqref{LS} such that $$
u^{\mu} \in C([0,T];H^{m+1}(\mathbb{R}^3)) \cap C^1([0,T];H^{m}(\mathbb{R}^3)).
$$ This can be viewed as an intermediate system between the linearized system \eqref{LEQ} and the full system \eqref{UEQ}. By the Duhamel principle, we have the following solution representation
\begin{equation}\label{duhamel_dispersive}
    u^{\pm}(t)=e^{\pm itNp_\mu(D)}P_{\pm}u_0 - \int_{0}^{t}e^{\pm i (t-\tau)N p_\mu(D)}P_{\pm}(u^{\mu}(\tau)\cdot\widetilde{\nabla})u^{\mu}(\tau)\,\ud\tau.
\end{equation}
Relying on the expression \eqref{duhamel_dispersive}, the linear estimates that were obtained in Section~\ref{sec_Strichartz} lead to the following lemma.
\begin{lemma}[\cite{Tak1}]
Fix $\mu\neq 1$. Let $m\in \mathbb{N}$ satisfy $m\geq 2.$  For each $u_0\in H^{m+1}$ with $\widetilde{\nabla}\cdot u_0=0,$ there exists a unique classical solution $u^{\pm}$ to \eqref{MLS} in the class
\begin{equation*}
    u^{\pm}\in C([0,T];H^m(\mathbb{R}^3))\cap C^{1}(0,T;H^{m-1}(\mathbb{R}^3)).
\end{equation*}
Furthermore, we obtain the uniform bound
\begin{equation}\label{MLSuniform}
    \sup_{t\in[0,T]}\|u^{\pm}(t)\|_{H^m}\leq \|u_0\|_{H^m}+C(m,T,\|u_0\|_{H^{m+1}}).
\end{equation}
For $q\in[4,\infty)$, there exist positive constants $C_1=C_1(q)$ and $C_2=C_2(m,q,T,\|u_0\|_{H^{m+1}})$ such that
\begin{equation}\label{dispersive}
    \|\nabla^{l}u^{\pm}\|_{L^q(0,T;L^\infty)}\leq C_1 N^{-\frac{1}{q}}(\|u_0\|_{H^{2+l}}+C_2)
\end{equation}
for $l=0,1,2,\ldots,m-2.$ The information of $\Omega$ is encoded in the relation $\frac{\Omega}{N}=\mu$.
\end{lemma}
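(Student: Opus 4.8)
The plan is to regard the modified system~\eqref{MLS} as an \emph{inhomogeneous linear} equation for $u^{\pm}$ whose forcing is entirely prescribed by the QG solution $u^{\mu}$, and then to run a frequency-localized Strichartz argument on the Duhamel representation~\eqref{duhamel_dispersive}. First I would fix the forcing: applying Proposition~\ref{uniform} at regularity $m+1$ (legitimate since $u_0\in H^{m+1}$) gives $u^{\mu}\in C([0,T];H^{m+1})\cap C^{1}([0,T];H^{m})$ with $\sup_{[0,T]}\|u^{\mu}\|_{H^{m+1}}\le C_{L}$, and since $m\ge 2$ the space $H^{m}(\bbR^{3})$ is a Banach algebra while $P_{\pm}$ has an idempotent rank-one matrix symbol of operator norm $1$ that commutes with $\langle D\rangle^{m}$, so the source $F^{\pm}:=-P_{\pm}(u^{\mu}\cdot\widetilde{\nabla})u^{\mu}$ lies in $C([0,T];H^{m})$ with $\sup_{[0,T]}\|F^{\pm}\|_{H^{m}}\le C\,C_{L}^{2}$. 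Because $p_{\mu}$ is real-valued and $0$-homogeneous, the multiplier $e^{\pm i\theta Np_{\mu}(\xi)}$ has modulus one and commutes with $\langle D\rangle^{s}$, hence $e^{\pm i\theta Np_{\mu}(D)}$ is an isometry on every $H^{s}$; therefore~\eqref{duhamel_dispersive} defines a unique $u^{\pm}\in C([0,T];H^{m})$ solving~\eqref{MLS}, and reading $\partial_{t}u^{\pm}$ off the equation gives $u^{\pm}\in C^{1}([0,T];H^{m-1})$. The uniform bound~\eqref{MLSuniform} is then immediate from~\eqref{duhamel_dispersive}: since $\|P_{\pm}g\|_{H^{m}}\le\|g\|_{H^{m}}$ and the group is an $H^{m}$-isometry, $\|u^{\pm}(t)\|_{H^{m}}\le\|u_{0}\|_{H^{m}}+\int_{0}^{t}\|(u^{\mu}\cdot\widetilde{\nabla})u^{\mu}\|_{H^{m}}\,\mathrm{d}\tau\le\|u_{0}\|_{H^{m}}+CTC_{L}^{2}$, and $CTC_{L}^{2}=C(m,T,\|u_{0}\|_{H^{m+1}})$.

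The substantive step is the dispersive estimate~\eqref{dispersive}. For the homogeneous part $e^{\pm itNp_{\mu}(D)}P_{\pm}u_{0}$ I would use a Littlewood--Paley decomposition: because $p_{\mu}$ is $0$-homogeneous, the dispersive bound of Lemma~\ref{supdecay} extends to every dyadic frequency scale $2^{j}$ by the rescaling $\xi\mapsto 2^{j}\xi$, the localized kernel picking up a factor $2^{3j}$, so that $\|e^{\pm itNp_{\mu}(D)}\Delta_{j}\varphi\|_{L^{\infty}}\lesssim 2^{3j}(1+|Nt|)^{-1/2}\|\Delta_{j}\varphi\|_{L^{1}}$. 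Feeding this into the $TT^{*}$ machinery on each piece --- using Hardy--Littlewood--Sobolev at the borderline $q=4$ and Young's inequality (which exploits the improvement of $(1+|Nt|)^{-1/2}$ over the bare power near $t=0$) for $q>4$, the endpoint $r=\infty$ being permissible thanks to the frequency localization --- gives $\|e^{\pm itNp_{\mu}(D)}\Delta_{j}\varphi\|_{L^{q}(\bbR;L^{\infty})}\lesssim 2^{3j/2}N^{-1/q}\|\Delta_{j}\varphi\|_{L^{2}}$. Applying $\nabla^{l}$ and summing over $j\in\bbZ$ --- the weight $2^{(l+3/2)j}=2^{-j/2}\cdot 2^{(l+2)j}$ makes the high-frequency series converge to a multiple of $\|u_{0}\|_{H^{l+2}}$ by Cauchy--Schwarz, while the low-frequency tail converges geometrically since $l+\tfrac32>0$ --- produces $\|\nabla^{l}e^{\pm itNp_{\mu}(D)}P_{\pm}u_{0}\|_{L^{q}(0,T;L^{\infty})}\lesssim N^{-1/q}\|u_{0}\|_{H^{l+2}}$ for $l\le m-2$. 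For the retarded Duhamel term, Minkowski's integral inequality bounds it by $\int_{0}^{T}\|\nabla^{l}e^{\pm i(t-\tau)Np_{\mu}(D)}P_{\pm}F^{\pm}(\tau)\|_{L^{q}(0,T;L^{\infty})}\,\mathrm{d}\tau$, and applying the homogeneous estimate to each integrand --- valid because $\|P_{\pm}F^{\pm}(\tau)\|_{H^{l+2}}\le\|F^{\pm}(\tau)\|_{H^{m}}\le CC_{L}^{2}$, which is exactly where $l+2\le m$ (that is, $l\le m-2$) is needed --- yields $\lesssim N^{-1/q}TC_{L}^{2}$. Adding the two contributions and absorbing the $T$- and $C_{L}$-dependent quantities into $C_{2}=C_{2}(m,q,T,\|u_{0}\|_{H^{m+1}})$, with $C_{1}=C_{1}(q)$, gives~\eqref{dispersive}.

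The step I expect to be the main obstacle is producing the \emph{sharp} power $N^{-1/q}$ rather than merely \emph{some} negative power of $N$. The natural time rescaling $t\mapsto t/N$ that isolates the factor $N^{-1/q}$ simultaneously stretches the time horizon to $[0,NT]$, so one must verify that the underlying Strichartz constant is independent of the length of the time interval: for $q>4$ this holds because $(1+|t|)^{-1/2}\in L^{q/2}(\bbR)$, whereas at $q=4$ one must use the bare-power dispersive bound together with Hardy--Littlewood--Sobolev (not Young's inequality) to avoid a logarithmic loss, which is precisely why the compact frequency support in Lemma~\ref{supdecay} is indispensable. A secondary, more bookkeeping-type difficulty is controlling the two-derivative gap between $L^{2}$ and $L^{\infty}$ in three dimensions, which pins the admissible range at $l\le m-2$ and forces the hypothesis $u_{0}\in H^{m+1}$ --- needed so that $u^{\mu}\in H^{m+1}$ and hence $F^{\pm}\in H^{m}$ --- rather than merely $u_{0}\in H^{m}$.
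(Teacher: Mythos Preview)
Your proposal is correct and follows essentially the same route the paper intends: the paper's own proof is a two-line referral to Lemma~\ref{supdecay} and the argument of \cite{Tak1}, and what you have written is precisely a faithful reconstruction of that argument---Duhamel plus $H^{s}$-isometry of the group for existence and \eqref{MLSuniform}, then a Littlewood--Paley/$TT^{*}$ Strichartz estimate exploiting the $0$-homogeneity of $p_{\mu}$ and the $(1+|Nt|)^{-1/2}$ decay, with HLS at the endpoint $q=4$, summed in $j$ to reach $H^{l+2}$ and combined with Minkowski for the inhomogeneous term. The bookkeeping (why $l\le m-2$, why $u_{0}\in H^{m+1}$) is also identified correctly.
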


\begin{proof}
    We use Lemma~\ref{supdecay}. Then we can follow \cite{Tak1} in an analogous fashion.
\end{proof}

\section*{Competing Interests}
 The authors have no competing interests to declare that are relevant to the content of this article.
\section*{Data Availability}
Data sharing not applicable to this article as no datasets were generated or analysed during the current study.
\section*{Acknowledgment}
\noindent The authors greatly thank the anonymous referees for their significant suggestions and corrections. The authors' gratitude also goes to Tai-Peng Tsai for his keen comments on important changes in the statements. J. Kim's work was supported by a KIAS Individual Grant (MG086501) at Korea Institute for Advanced Study. J. Lee's work was supported by National Research Foundations of Korea NRF-2021R1A2C1092830.
\bibliographystyle{amsplain}
\bibliography{Euler-Boussinesq}



\end{document}